\def\tooee{LaTeX2e}
\newtheorem{thm}{Theorem}
\numberwithin{thm}{section}
\newtheorem{lemma}[thm]{Lemma}
\newtheorem{corollary}[thm]{Corollary}
\newtheorem{deff}[thm]{Definition}
\newtheorem{rmk}[thm]{Remark}
\newtheorem{prop}[thm]{Proposition}
\numberwithin{equation}{section}
\newcommand{\uG}{\bold G}
\newcommand{\uT}{\bold T}
\newcommand{\uB}{\bold B}
\newcommand{\uU}{\bold U}
\newcommand{\uA}{\bold A}
\newcommand{\uM}{\bold M}
\newcommand{\uN}{\bold N}
\newcommand{\uP}{\bold P}
\newcommand{\uH}{\bold H}
\newcommand{\sN}{\cal N}
\newcommand{\sC}{{\cal C}}
\newcommand{\sA}{{\cal A}}
\newcommand{\Res}{\operatornamewithlimits{Res}}
\newcommand{\diag}{\operatorname{diag}}
\begin{document}
\sloppy
\parskip=6pt
\title{The tempered spectrum of quasi-split classical groups III:\
 The odd orthogonal groups}
\author{D.~Goldberg\thanks{Partially supported by the NSF Grant DMS 9801340}\ \ \& F.~Shahidi\thanks{Partially supported by the NSF Grant DMS0200325 and a Guggenheim Fellowship}}
\maketitle
{\bf Introduction}

\noindent
We continue our study of the tempered spectrum of quasi-split classical groups. Here we examine the
case of the special orthogonal groups of odd dimension.
While this is the last of the 
classical groups to be examined, it is the first for which our results address
the tempered spectrum whose supercuspidal support is an arbitrary maximal parabolic subgroup, as we describe below.
We continue to see the connection between 
poles of local Langlands $L$--functions, reducibility of parabolically induced from supercuspidal representations, and the theory of twisted endoscopy.
The recent progress in automorphic transfer and the local Langlands conjecture
allows us to get more precise results than in previous cases. In particular, we can show that poles of the local Rankin-product $L$--functions are determined by local components of automorphic transfer, and for the most interesting case of $GL_{2n}\times SO_{2n+1},$ the pole should be given precisely by this data.  That we can also resolve 
reducibility for $GL_k\times SO_{2n+1},$ for all $n$ and $k,$ stands in contrast to earlier cases, where we needed
some restrictions.

We let $M\simeq GL_n\times SO_{2m+1}$ be an arbitrary maximal Levi subgroup of
$G= SO_{2r+1},$  with $m+n=r$.
The main object of study for us is the 
standard intertwining operators and their poles. 
If $\tau'\otimes\tau$ is an irreducible
supercuspidal representation of $M$, then (when $\tau$ is generic)
the poles of the intertwining operators are those of the product of
two local $L$--functions, $L(2s,\tau',sym^2)$ and $L(s,\tau'\times\tau)$, the latter
being the Rankin product $L$--function.
The intertwining operators have simple poles, and hence at most one of the $L$--functions has a pole at $s=0$.
Furthermore, the operators
are entire if $\tau'\not\simeq\tilde\tau'$.
We will therefore restrict ourselves to the case $\tau'\simeq\tilde\tau'.$
The symmetric square $L$--functions are studied in \cite{Sh2}, and it is shown there that those irreducible self dual 
supercuspidal representations which do not have a pole of this $L$--function, give a pole of $L(s,\tau',\wedge^2),$
the exterior square $L$--function (cf \cite{Sh2}).  It is the subject
matter of \cite{He2} that these $L$--functions are the Artin $L$--functions 
$L(s,sym^2\phi')$ and $L(s,\wedge^2\phi'),$ where
$\phi'$ is the Langlands parameter attached to $\tau'$ 
as in \cite{HT,He}.   Thus, such irreducible supercuspidal representations must have the property that the
corresponding parameter given by \cite{HT,He} must factor through the symplectic group, as described in \cite{Sh2}.

The theory of $R$-groups \cite{Knapp-Stein,Silberger} reduces the
classification problem for the tempered spectrum to the case of maximal
parabolic subgroups, along with the combinatorial
problem of determining the $R$-groups themselves.  For
classical groups, the $R$-groups in question have
been determined \cite{G3,G4}.
The case of the Siegel Levi was addressed by the second named author
for $G=Sp_{2n}$ or $SO_n$ in \cite{Sh2}.
The first  named author applied the methods of \cite{Sh2}
to the Siegel Levi subgroups of quasi-split unitary groups \cite{G1,G2}.
The case of an arbitrary maximal parabolic subgroup was studied in \cite{Sh3}
 for (split) $SO_{2n}$, \cite{GS1}
for $Sp_{2n}$ and quasi-split $SO_{2n}^*$ and in \cite{GS2} for quasi--split unitary groups.
However, in each case, only the part of tempered spectrum with
supercuspidal support on certain maximal parabolic subgroups were resolved.
In particular,
only those parabolics with Levi components of the form $GL_{2n}\times SO_{2m}$,
$GL_{2n}\times Sp_{2m}$, $GL_{2n}\times SO_{2m}^*$, $\text{Res}_{E/F} GL_{2m}\times U_{m,m}$,
or $\text{Res}_{E/F} GL_{2n+1}\times U_{m,m+1}$. In short, the dimension of the general linear component must have
the same parity as the dimension of the classical group.  The reason for this restriction is our inability to
successfully analyze the norm maps in the case where the dimensions of the two groups have opposite parities.

In the case under current study, we are, in fact, able to give a description of
the tempered spectrum with supercuspidal support in any maximal 
parabolic subgroup.  We accomplish this by noting that the regular semisimple elements of $SO_{2m}$
and $SO_{2m+1}$ are in bijective correspondence, and this bijection is well behaved with respect to the norm map we
define here (cf. Lemma 2.5, Corollary 2.6, and Secton 3). 
We again use the theory of twisted endoscopy, as described
in
\cite{KS}, and to do so we need to understand the norm map from twisted conjugacy classes of $GL_n$ to conjugacy
classes of
$SO_{2m+1}$. However, the argument of Lemma 5.9 of \cite{Sh3}, which could also be applied to Lemmas 3.11 of
\cite{GS1} and 2.12 of \cite{GS2} is not directly applicable here. Hence a different approach is needed. We use the
fact that the tori of $SO_{2m+1}$ and
$SO_{2m}$ are in bijection  in a very straightforward way (cf Corollary 2.6 and Proposition 2.7).
This allows us to construct an explicit proof that the image of the norm map includes all semisimple classes when $n=2m+1$.
We are then able to further exploit the
matching of tori, and its compatibility with the norm map to show that the same result holds when $n=2m$ (cf. Section 3).
This is then enough to apply the methods of previous cases to arbitrary parabolic subgroups.
We remark that the case of greatest interest here, is that of $n=2m$, owing
to the local functoriality conjecture of Langlands \cite{C-K-PS-S,JS}.

Our understanding of the norm map allows us to interpret the poles of intertwining operators as sums of integrals in which the integrand is a product of a twisted orbital integral on $GL_n$ times an orbital integral on
$SO_{2m+1}$.
We further decompose this and regroup the sum to
obtain, as in previous cases, two parts.
The first of which we refer to as the main term, $R_{\bold G}$, 
and it comes from the Weyl integration formula.
The other part is given by singular terms.
These singular terms are of the form
$\displaystyle{|W(\bold T)|^{-1}\Res_{s=0}
\int_{T\setminus\Omega}\varphi_{\sA}(s,\gamma)|D(\gamma)|\,d\gamma}$,
with $\bold T$ a Cartan subgroup of $SO_{2\ell+1}$, for some
$\ell$, $\varphi_{\sA}$ an entire function of
$s$, and $\Omega$  an open compact subset of $T$ (cf Theorem 4.9 and Corollary 4.10).
It is quite remarkable that the residue in each of the singular terms above is independent of the choice of $\Omega$, and therefore depends only on the singular set.  This was noted as well for earlier cases (see \cite{GS2}).
At this time, we have no way of resolving these terms further.
Understanding these terms in some more
explicit way must be the focus of further study. We find
this extremely intriguing, and believe these terms must contain 
deep arithmetic information which, as yet, we have no techniques for extracting.

There is one other aspect of our study here which differs from our earlier reports.  
Due to the significant progress in the Langlands functoriality conjecture, we are able to give a better description of the relation between the poles of $L$--functions and the theory of twisted endoscopy.
In particular, we use the recent results of \cite{C-K-PS-S} and \cite{JS} to define a notion of  local automorphic transfer (cf Definition 5.2).
Using the properties of the automorphic transfer, we note that
$L(s,\tau'\times\tau)$ is entire when $n>2m.$
Furthermore, if $n=2m,$ $L(s,\tau'\times\tau)$ has a pole if and only if $\tau'$ is the local automorphic transfer of $\tau$ (cf 
Proposition 5.3 and \cite{JS}).
On the other hand, the non-vanishing of the
main term $R_{\bold G}$ is indicative of $\tau'$ coming from $\tau$ via twisted endoscopy (cf.~Definition 5.1).
This is our strongest evidence to date that
automorphic transfer is given locally by twisted endoscopy.

In Section 1 we give basic definitions, and give an initial characterization of the intertwining operator.
In Section 2, we examine the norm correspondence for the case $n=2m+1$.
In Section 3 we turn to the case $n=2m$, and then
apply those results, as well as those of Section 2 to describe the norm correspondence when $n>2m+1$.
In Section 4 we describe the poles of the intertwining operator for all $n$ and $m$, and prove our main theorem.
In Section 5 we examine the relation of the results of Section 4 to automorphic transfer and twisted endoscopy.  We remark that several of the proofs of the results in sections 1 and 2 are almost verbatim those of the corresponding results in \cite{Sh3, GS1, GS2}, except for minor details, such as carrying a sign all the way through.  For this reason we choose to omit some of these longer proofs, referring to the earlier results.

The authors would like to thank the Centre 
International de Recontres Math\'ematiques, in Luminy France, and the Park City Mathematics Institute, where much of this manuscript was written.
Both institutes provided pleasant environs in which to work and 
a high level of interesting and motivating mathematical activity.

\section{Preliminaries}
Let $F$ be a local nonarchimedean field of characteristic zero.
Fix the form
$w_n=\begin{pmatrix} 
&&&&&& 1\\ &&&&& .\\&&&&.\\&&&.\\ &&1\\ &1\\ 1
\end{pmatrix}\in GL_n(F)$, for any $n\geq 1$.
Note that $w_n={}^t w_n=w_n^{-1}$.
We let $\uG=\uG(r)=SO_{2r+1}$, defined with respect to the form $w_{2r+1}$.
The maximal split torus of diagonal elements is denoted by $\uT$,
$$
\uT=\left\{\begin{pmatrix}
x_1\\ &x_2\\ &&\ddots\\ &&&x_r\\ &&&&1\\ &&&&&x_r^{-1}\\ &&&&&&\ddots\\ &&&&&&& x_2^{-1}\\
&&&&&&&&x_1^{-1}\end{pmatrix}\Bigg|x_i\in\uG_m=GL_1\right\}
$$
We set $\uB=\uT\uU$ to be the upper triangular matrices in $\uG$.
The root system, $\Phi(\uG,\uT)$ is of type $B_r$, with simple roots $\{e_1-e_2,e_2-e_3,\ldots,e_{r-1}-e_r,e_r\}$.
We take the subset $\theta=\Delta-\{e_n-e_{n+1}\}$.
Setting $m=r-n$, we have
$$
\uA=\uA_\theta=\left\{\begin{pmatrix} xI_n\\ &I_{2m+1}\\ &&x^{-1}I_n \end{pmatrix}\bigg|x\in GL_1\right\},
$$
and $\uM=\uM_\theta=Z_{\uG}(\uA_\theta)=\left\{\begin{pmatrix} g\\ &h\\ &&\varepsilon(g)\end{pmatrix}\bigg|
\substack{h\in\uG(m)\\ g\in GL_n}\right\}$, where $\varepsilon(g)=w_n {}^tg^{-1} w_n^{-1}$.
Thus, $\uM\simeq GL_n\times SO_{2m+1}$.
Take $\uP=\uM\uN$, to be the standard (with respect to $(\uB,\uT)$), parabolic with Levi component $\uM$.
Then
$$
N=\Bigg\{\begin{pmatrix} I&X&Y\\ 0&I&X'\\ 0&0&I\end{pmatrix} \bigg|
Y+\tilde\varepsilon(Y)=XX' \Bigg\},
$$
where $X'=-w_{2m+1}{}^tX w_n$, and $\tilde\varepsilon(Y)=w_n{}^t Y w_n$.

We denote
\begin{equation}
Y+\tilde\varepsilon(Y)=XX'.
\end{equation}
Note that if $(X,Y)$ is a solution to (1.1), then
\begin{eqnarray*}
gY\varepsilon(g)^{-1}&+&\tilde\varepsilon(g Y\varepsilon(g)^{-1})=g
(Y+\tilde\varepsilon(Y))\varepsilon(g)^{-1}\\
&=&(gX)(X'\varepsilon(g)^{-1})=(gX)(gX)'.
\end{eqnarray*}
Thus if there is a solution to (1.1) for $Y$, then there is a solution for every element of the orbit of $Y$ under $\varepsilon$--twisted
adjoint action of $GL_n$.

Also, if $Y\in GL_n(F)$, and $(X,Y)$ satisfies (1.1), then $Y^{-1}+\tilde\varepsilon(Y^{-1})=Y^{-1}(Y+\tilde\varepsilon(Y))\tilde\varepsilon(Y^{-1})=Y^{-1} XX'\tilde\varepsilon(Y)^{-1}=(Y^{-1} X)(Y^{-1} X)'$.
So the $\varepsilon$--conjugacy classes for which (1.1) has a solution is closed under inversion.
We let $\sN$ be this collection of $\varepsilon$--conjugacy classes.

We fix $w_0=\begin{pmatrix} &&I_n\\ &(-1)^n I_{2m+1}\\ I_n\end{pmatrix}$.
Then $w_0$ represents the unique non--trivial element of the Weyl group $W(\uG,\uA)$.
Let $\overline{\uN}$ be the unipotent radical opposite to $\uN$.

\begin{lemma}Let $u=\begin{pmatrix} I&X&Y\\ 0&I&X'\\ 0&0&I\end{pmatrix}\in\uN$.
Then $w_0^{-1}u\in P\overline{N}$ if and only if $Y\in GL_n$, in which case
\begin{equation}
w_0^{-1}u=\begin{pmatrix} \varepsilon(Y)&-Y^{-1}X & I_n\\ 0&(-1)^n (I-X' Y^{-1} X)&(-1)^n X'\\ 0&0&Y\end{pmatrix}\begin{pmatrix} I_n&0&0\\ (Y^{-1}X)'&I&0\\ Y^{-1}&Y^{-1}X&I\end{pmatrix}
\end{equation}
\end{lemma}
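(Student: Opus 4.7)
The plan is to establish Lemma 1.1 by direct block matrix computation, invoking the relation (1.1) only where it is essential.

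First, one checks that $w_0^2=I_{2r+1}$, so $w_0^{-1}=w_0$, and left-multiplication of $u$ by $w_0$ yields
\begin{equation*}
w_0^{-1}u \;=\; \begin{pmatrix} 0 & 0 & I_n \\ 0 & (-1)^n I_{2m+1} & (-1)^n X' \\ I_n & X & Y\end{pmatrix}.
\end{equation*}

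For the ``only if'' direction, suppose $w_0^{-1}u = p\bar n$ with $p \in P$ and $\bar n \in \overline{N}$. Parametrize $p$ in its standard block-upper-triangular form with Levi diagonal $(g, h, \varepsilon(g))$, and $\bar n$ in block-lower-triangular form with identity diagonal blocks. Reading off the $(3,3)$-block of $p\bar n$ gives $\varepsilon(g)=Y$, which forces $Y\in GL_n$ and $g=\varepsilon(Y)$.

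For the ``if'' direction together with the formula (1.2), assume $Y\in GL_n$ and multiply the two factors on the right-hand side of (1.2) block by block. The entries in positions $(1,2), (1,3), (2,2), (2,3), (3,1), (3,2)$ and $(3,3)$ match by elementary algebra. Matching the $(1,1)$-block reduces, after unwinding $X' = -w_{2m+1}{}^t X w_n$ and using $w_n^2 = I$, to the identity $Y^{-1}XX'\varepsilon(Y) = \varepsilon(Y) + Y^{-1}$; substituting $XX' = Y + \tilde\varepsilon(Y)$ from (1.1), this collapses to the identity $\tilde\varepsilon(Y)\varepsilon(Y)=I$, which is immediate from the definitions.

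It remains to verify that the two factors lie respectively in $P$ and $\overline{N}$. The only non-trivial part is checking that the middle diagonal block $h := (-1)^n(I - X'Y^{-1}X)$ lies in $SO_{2m+1}$, i.e.\ that ${}^t h\, w_{2m+1}\, h = w_{2m+1}$. Expanding this product, using ${}^t X' = -w_n X w_{2m+1}$ and $w_n^2 = w_{2m+1}^2 = I$, and then substituting $XX' = Y + \tilde\varepsilon(Y)$ from (1.1) into the quadratic cross term, all unwanted terms cancel pairwise and one is left with $w_{2m+1}$; the sign $(-1)^n$ squares out of this computation but is needed to ensure $\det h = +1$. The analogous check that the right factor lies in $\overline{N}$ is of the same character, reducing via $\tilde\varepsilon$ to the opposite form of (1.1). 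The main obstacle is the bookkeeping in these last verifications; once the conventions for $\varepsilon$, $\tilde\varepsilon$, $X'$ and the symmetry of $w_n, w_{2m+1}$ are held fixed, the algebra is rigid and essentially forced by (1.1).
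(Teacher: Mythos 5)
Your route is the same as the paper's: the authors dispose of this lemma with the single sentence ``This is a straightforward matrix calculation,'' and you are simply carrying that calculation out. Most of what you write is correct: $w_0^{-1}=w_0$, the displayed form of $w_0^{-1}u$, the ``only if'' direction via the $(3,3)$ block, and the reduction of the $(1,1)$ block to $\tilde\varepsilon(Y)\varepsilon(Y)=I$ after substituting (1.1) all check out.

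There is, however, one block you never verify. You list seven positions as matching by elementary algebra and then treat $(1,1)$, which accounts for only eight of the nine entries: the $(2,1)$ block is missing. It is not an innocuous omission, because $(2,1)$ is precisely the other entry (besides $(1,1)$) whose vanishing genuinely uses (1.1). Concretely, the $(2,1)$ entry of the product on the right of (1.2) is
$(-1)^n\bigl[(I-X'Y^{-1}X)(Y^{-1}X)'+X'Y^{-1}\bigr]$,
and since $(Y^{-1}X)'=X'\varepsilon(Y)$, its vanishing amounts to the identity
$(I-X'Y^{-1}X)X'=-X'Y^{-1}\tilde\varepsilon(Y)$,
which follows by expanding $X'Y^{-1}XX'$ and substituting $XX'=Y+\tilde\varepsilon(Y)$; this is exactly Lemma 1.5(a) of the paper. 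Once that is inserted your computation of the product is complete. A smaller point: to conclude that the two factors lie in $P$ and $\overline N$ respectively, checking only that the middle block $(-1)^n(I-X'Y^{-1}X)$ lies in $SO_{2m+1}$ does not by itself show the first factor preserves the form $w_{2r+1}$ (its off-diagonal blocks also enter); it is cleaner to verify directly that the second factor lies in $\overline N\subset G$ and then deduce that the first factor, being $w_0^{-1}u$ times the inverse of an element of $G$ and block upper triangular, lies in $P$.
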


\begin{proof}This is a straightforward matrix calculation.
\end{proof}

\begin{corollary}If $(X,Y)$ is a rational solution to (1.1), with $Y\in Gl_n(F)$, then $(-1)^n(I-X' Y^{-1}X)\in G(m)$.
\end{corollary}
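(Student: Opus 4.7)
The plan is to deduce the corollary directly from Lemma 1.1 by tracking which group each factor in (1.2) belongs to. First, note that $w_0$ represents a Weyl group element and so lies in $\uG$, while $u\in\uN\subset\uG$; hence $w_0^{-1}u\in\uG$. The second factor on the right side of (1.2) is block lower-unipotent and therefore manifestly lies in $\overline{\uN}\subset\uG$. Consequently, the first factor of (1.2), obtained as the product of $w_0^{-1}u$ with the inverse of the second factor, must also lie in $\uG$.

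Next I would observe that this first factor is block upper triangular with respect to the $(n,2m+1,n)$ partition defining $\uP$. Since $\uP$ is the stabilizer in $\uG$ of the isotropic flag $0\subset V_n\subset V_n^\perp\subset V$ corresponding to $\theta=\Delta-\{e_n-e_{n+1}\}$, every block upper triangular element of $\uG$ automatically belongs to $\uP=\uM\uN$. Thus the first factor of (1.2) lies in $\uP$.

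Finally, I would invoke the uniqueness of the Levi decomposition $\uP=\uM\ltimes\uN$: the first factor can be written uniquely as $m\cdot n$ with $m\in\uM$ and $n\in\uN$, and since elements of $\uN$ have identity diagonal blocks, the diagonal blocks of $m$ agree with those read off from the matrix in (1.2). The middle diagonal block is $(-1)^n(I-X'Y^{-1}X)$, so this element must be the middle entry of an element of $\uM\simeq GL_n\times SO_{2m+1}$, which is exactly the assertion $(-1)^n(I-X'Y^{-1}X)\in\uG(m)$.

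There is no real obstacle: the only substantive step is recognizing that the first factor of (1.2), although not a priori in $\uP$, is forced into $\uP$ by the combination of being block upper triangular and lying in $\uG$. Crucially, this routes around any direct verification that $(-1)^n(I-X'Y^{-1}X)$ preserves the form $w_{2m+1}$ and has determinant one, which would otherwise require a somewhat delicate computation using the defining relation $Y+\tilde\varepsilon(Y)=XX'$.
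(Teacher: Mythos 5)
Your proposal is correct and is essentially the argument the paper intends: Corollary 1.2 is meant to be read off from the factorization (1.2) of Lemma 1.1, with the first factor lying in $\uP$ and its middle Levi block therefore lying in $SO_{2m+1}$. The only point worth polishing is your claim that the second factor ``manifestly'' lies in $\overline{\uN}$ merely by being block lower-unipotent --- strictly one also needs it to satisfy the orthogonality relations defining $\overline{\uN}$, but this is already asserted by Lemma 1.1 (which places $w_0^{-1}u$ in $P\overline{N}$ with (1.2) as that decomposition), so your deduction stands.
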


We fix $\tau'\in\, ^o{\cal E}(GL_n(F))$, and $\tau\in\,^o{\cal E}(G(m)),$ where 
$^o{\cal E}(H)$ denotes the equivalence classes of irreducible admissible unitary supercuspidal representations of a
reductive $p$--adic group $H.$ We set $V(s,\tau'\otimes\tau)$ to be the space of the induced representation,
$\text{Ind}_P^G((\tau'\otimes|\det(\ )|_F^s)\otimes\tau)=I(s,\tau'\otimes\tau)$.
We wish to determine the reducibility of $I(s,\tau'\otimes\tau)$, and thus, we may assume that
$(\tau'\otimes\tau)^{w_0}\simeq\tau'\otimes\tau.$  This is equivalent to
$\tilde\tau'\simeq\tau',$ with $\tilde\tau'$ the smooth contragredient of $\tau'.$ Let $V(s,\tau'\otimes\tau)_0$ be
those elements of
$V(s,\tau'\otimes\tau)$ which are compactly supported in $\overline N\mod P$. We fix $f\in V(s,\tau'\otimes\tau)_0$,
with
$$
f\Bigg(\begin{pmatrix} I&0&0\\ X'\varepsilon(Y)&I&0\\ Y^{-1}&Y^{-1}X&I\end{pmatrix}\Bigg)=\xi_L(Y^{-1})\xi_{L'}(Y^{-1}X)\cdot
v'\otimes v,
$$
for some $v'\in V_{\tau'},\ v\in V_\tau,\ L,L'$ compact subsets of $M_n(F),\ M_{n\times 2m+1}(F)$, respectively, and $\xi_L,\ \xi_{L'}$ are characteristic functions.  Note that such functions span $V(s,\tau'\otimes\tau)_0$ over $G.$

Fix $\tilde v',\ \tilde v$ in $\tilde V_{\tau'}$ and $\tilde V_\tau$, respectively.
Let
\begin{eqnarray*}
\psi_{\tau'}(g)&=&\langle \tilde v',\ \tau'(g) v'\rangle,\ g\in GL_n(F),\text{ and}\\
f_\tau(h)&=&\langle\tilde v,\ \tau(g) v\rangle,\ \text{for }h\in G(m).
\end{eqnarray*}
We are interested in examining the standard intertwining operator
$$
A(s,\tau'\otimes\tau,w_0) f(g)=\int\limits_N f(w_0^{-1}ng)dn.
$$
We need only examine poles of $s\mapsto A(s,\tau'\otimes\tau,w_0)f(e),$ (Lemma 4.1 of \cite{Sh2}) for $f\in V(s,\tau'\otimes\tau)_0$.
Note that, for our choice of $f$ as above
\begin{eqnarray}
&&\langle \tilde v'\otimes \tilde v,\ A(s,\tau'\otimes\tau,\ w_0)f(e)\rangle=\\
&&\int\limits_{(X,Y)}\psi_{\tau'}(\varepsilon(Y)) f_\tau((-1)^n(I-X'Y^{-1}X))
|\det Y|_F^{-s-\langle\rho,\tilde\alpha\rangle}\xi(X,Y)d(X,Y)\nonumber
\end{eqnarray}
with $\xi(X,Y)=\xi_L(Y^{-1})\xi_{L'}(Y^{-1}X)$.
Here $(X,Y)$ is taken over all solutions to (1.1) with $Y\in GL_n(F)$.

\begin{lemma}Suppose $(X,Y)$ is a rational solution to (1.1).
If $(Xg)(Xg)'=XX'$, for some $g\in GL_{2m+1}(F)$, then $Xg=Xh$, for some $h\in SO_{2m+1}(F)$.
\end{lemma}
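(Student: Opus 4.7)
The plan is to interpret $(Xg)(Xg)' = XX'$ as an equality of Gram matrices with respect to the bilinear form defining $SO_{2m+1}$, apply Witt's extension theorem to produce an isometry of $F^{2m+1}$ agreeing with right-multiplication by $g$ on the row span of $X$, and then adjust by a reflection to arrange that the extension has determinant one.

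Using the definition $X' = -w_{2m+1}\,{}^tX\,w_n$ and cancelling the invertible right factor $w_n$, the hypothesis becomes $(Xg)\,w_{2m+1}\,{}^t(Xg) = X\,w_{2m+1}\,{}^tX$. Writing $x_1,\dots,x_n$ for the rows of $X$ and $B(v,w) = v\,w_{2m+1}\,{}^tw$ for the nondegenerate symmetric bilinear form on $V := F^{2m+1}$ preserved by $O_{2m+1}(F)$, this reads $B(x_i g, x_j g) = B(x_i, x_j)$ for all $i,j$. Hence the assignment $x_i \mapsto x_i g$ extends by linearity to a $B$-preserving linear map $\phi\colon U \to Ug$, where $U = \operatorname{span}(x_1,\dots,x_n) \subseteq V$; well-definedness is automatic because right multiplication by $g$ is itself linear.

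By Witt's extension theorem for nondegenerate symmetric bilinear forms in characteristic zero, $\phi$ extends to some $h_0 \in O(V) = O_{2m+1}(F)$, and by construction $X h_0 = X g$. If $\det h_0 = 1$, take $h := h_0$. If not, choose an anisotropic vector $u \in (Ug)^\perp$, let $\sigma$ be the associated reflection, and set $h := h_0 \sigma \in SO_{2m+1}(F)$; since the rows of $Xg$ lie in $Ug \subseteq u^\perp$, they are fixed by $\sigma$, whence $Xh = (Xg)\sigma = Xg$, while $\det h = -\det h_0 = 1$.

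The main obstacle I foresee is securing the existence of an anisotropic vector in $(Ug)^\perp$; equivalently, $(Ug)^\perp$ must not be totally isotropic. This is the point at which one has to use the extra structure coming from $(X,Y)$ being a rational solution of (1.1), rather than just the bare Gram-matrix identity, and it is precisely the kind of sign-tracking subtlety the authors highlight in the introduction as distinguishing the odd-orthogonal case from the analogous arguments in \cite{Sh3}, \cite{GS1}, and \cite{GS2}.
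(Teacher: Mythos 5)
Your reduction to an equality of Gram matrices for the form $B(u,v)=u\,w_{2m+1}\,{}^tv$ and the appeal to Witt's extension theorem is exactly the route taken in the sources the paper cites for this lemma (Lemma 4.1 of \cite{Sh3}, Lemma 3.1 of \cite{GS1}, Lemma 2.1 of \cite{GS2}); up to the point where you produce $h_0\in O_{2m+1}(F)$ with $Xh_0=Xg$, your argument is correct and is the intended one.

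The determinant-correction step, however, is a genuine gap, and it cannot be closed in the form you propose. Write $W=Ug$. An element of $O_{2m+1}(F)$ of determinant $-1$ fixing $W$ pointwise exists if and only if $W^\perp\not\subseteq W$: indeed, if $W^\perp\subseteq W$ (which includes the full-rank case $W=F^{2m+1}$), then any $h$ in the pointwise stabilizer of $W$ satisfies $(h-1)V\subseteq W^\perp$ and $(h-1)W^\perp\subseteq (h-1)W=0$, so $(h-1)^2=0$, $h$ is unipotent, and no correction of the determinant is possible; every Witt extension of $\phi$ then has the same determinant as $h_0$. Moreover, the hypothesis that $(X,Y)$ solves (1.1) does not rescue the reflection trick. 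Take $m=1$, $n=2$, $X=\left(\begin{smallmatrix}1&0&0\\0&1&0\end{smallmatrix}\right)$, $Y=\left(\begin{smallmatrix}1&0\\-1/2&-1\end{smallmatrix}\right)$ (one checks $Y+\tilde\varepsilon(Y)=XX'=\left(\begin{smallmatrix}0&0\\-1&0\end{smallmatrix}\right)$) and $g=\diag(1,-1,1)$. Then $(Xg)(Xg)'=XX'$, but the rows of $X$ span the degenerate plane $U=\langle e_1,e_2\rangle$ with $U^\perp=\langle e_1\rangle\subseteq U$, and the unique element of $O_3(F)$ agreeing with $g$ on $U$ is $g$ itself, of determinant $-1$; so no $h\in SO_3(F)$ satisfies $Xh=Xg$. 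What does survive in general --- and is all that the paper actually uses (cf.\ Lemma 1.4 and the conjugacy-class computation following it) --- is the statement up to the central element $-I_{2m+1}\in O_{2m+1}(F)\setminus SO_{2m+1}(F)$: when $\det h_0=-1$ one passes to $h=-h_0\in SO_{2m+1}(F)$ and absorbs the sign into the left $GL_n(F)$-action, $Xg=(-I_n)(Xh)$, which leaves the double coset $GL_n(F)XSO_{2m+1}(F)$ and the class of $(-1)^n(I-X'Y^{-1}X)$ unchanged. So you should either prove and use the lemma in this weakened double-coset form, or supply an argument that $\det h_0=1$ in the cases where $W^\perp\subseteq W$ --- which, as the example shows, is not a formal consequence of the stated hypotheses.
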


\begin{proof} The proof is essentially the same as Lemma 4.1 of \cite{Sh3}, Lemma 3.1 of \cite{GS1}, and Lemma 2.1 of \cite{GS2}.
\end{proof}

We have already seen that if $g\in GL_n(F)$, and $(X,Y)$ satisfies (1.1), then so does $(gX,\ gY\varepsilon(g)^{-1})$.
Hence the orbits $\{X\}\in GL_n(F)\backslash M_{n\times 2m+1}(F)/SO_{2m+1}(F)$.
Parameterize the $\varepsilon$--conjugacy classes for which (1.1) has a solution.
We say, in the case $(X,Y)$ satisfies (1.1) that $\{X\}$ parameterizes $\{Y^{-1}\}$.
Replacing $X$ with $gX$ leaves $(-1)^n (I-X' Y^{-1} X)$ unchanged if we replace $Y$ with $g Y\varepsilon (g)^{-1}$.

If $X_1=Xh$, with $h\in GL_{2m+1}(F)$, then by Lemma 1.3 we may assume $h\in SO_{2m+1}(F)$.
Then $X_1 X'_1=(Xh)(Xh)'=XX'$, so $(Xh,Y)$ is also a solution to (1.1), and 
\begin{eqnarray*}
&&(-1)^n (I-X'_1 Y^{-1} X_1)=(-1)^n (I-(Xh)' Y^{-1} (Xh)))=\\
&&(-1)^n (I-(-w_{2n+1}\, ^th ^t\!X w_n Y^{-1} Xh))=\\
&&(-1)^n (I- h^{-1} (-w_{2m+1}\,^t\!X w_n Y^{-1} X)h)=h^{-1}
((-1)^n I-X' Y^{-1} X)h
\end{eqnarray*}
so the conjugacy class of $(-1)^n (I-X' Y^{-1} X)$ is unchanged.

The following is now clear.

\begin{lemma}Suppose $\{X_1\}$, with $X_1=g Xh$ parameterizes $\{Y_1^{-1}\}$, with $Y_1=g Y\varepsilon (g)^{-1}$, (and $h\in GL_{2m+1}(F)$).
Then $X_1\in GL_n(F)X SO_{2n+1}(F)$.
\end{lemma}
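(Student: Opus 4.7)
The plan is to reduce directly to Lemma 1.3 via the observation recorded just after equation (1.1), namely that if $(X,Y)$ satisfies (1.1) and $g\in GL_n(F)$, then $(gX,gY\varepsilon(g)^{-1})$ is again a solution. Consequently both $(X_1,Y_1)$ and the pair $(gX,gY\varepsilon(g)^{-1})$ satisfy (1.1) and share the same second coordinate $Y_1=gY\varepsilon(g)^{-1}$. Evaluating (1.1) for each of these two solutions and comparing the right-hand sides forces
$$
X_1 X_1' \;=\; Y_1+\tilde\varepsilon(Y_1)\;=\;(gX)(gX)'.
$$

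Now set $X_2=gX$. The hypothesis $X_1=gXh$ rewrites as $X_1=X_2 h$, and the displayed identity becomes $(X_2 h)(X_2 h)'=X_2 X_2'$ with $h\in GL_{2m+1}(F)$. This is exactly the hypothesis of Lemma 1.3, applied with $X_2$ in the role of $X$ and our $h$ in the role of the ambient element of $GL_{2m+1}(F)$. Invoking that lemma produces $h_0\in SO_{2m+1}(F)$ with $X_2 h=X_2 h_0$, i.e.\ $X_1=(gX)h_0\in GL_n(F)\,X\,SO_{2m+1}(F)$, which is the desired conclusion.

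There is no real obstacle here: the $\varepsilon$-twisting on the $Y$-side is absorbed entirely by the identity $g(Y+\tilde\varepsilon(Y))\varepsilon(g)^{-1}=(gX)(gX)'$ already recorded in the text, so that the only genuine input needed on the $X$-side is Lemma 1.3. The role of this short argument is simply to package the bilinear identity for $X_1 X_1'$ in a form where that lemma applies directly.
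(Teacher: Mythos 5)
Your proof is correct and is essentially the argument the paper intends: the lemma is stated as "now clear" precisely because the preceding paragraphs establish that $(gX,gY\varepsilon(g)^{-1})$ solves (1.1) and that Lemma 1.3 converts the resulting identity $(X_2h)(X_2h)'=X_2X_2'$ into membership of $h$ (mod the stabilizer) in $SO_{2m+1}(F)$. Note only that the paper's statement has a typo ($SO_{2n+1}$ for $SO_{2m+1}$); your conclusion lands in the correct group.
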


\begin{lemma}Suppose $(X,Y)$ satisfies (1.1) with $Y$ invertible.
Then
\begin{description}
\item[(a)]$(I-X' Y^{-1} X) X'=-X' Y^{-1}\varepsilon (Y^{-1})$;
\item[(b)]$X(I-X' Y^{-1} X)=-\varepsilon (Y^{-1}) Y^{-1} X$.
\end{description}
\end{lemma}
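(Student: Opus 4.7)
The plan is to prove both identities by direct computation, expanding and substituting (1.1) in the middle of the expression, after observing the key identity linking $\tilde\varepsilon$ and $\varepsilon$.

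First I would record the bridge identity that makes the right-hand sides meaningful. Since $\varepsilon(g)=w_n\,{}^tg^{-1}w_n^{-1}$ for $g\in GL_n(F)$, with $w_n=w_n^{-1}$, applying this to $Y^{-1}$ gives
\begin{equation*}
\varepsilon(Y^{-1})=w_n\,{}^tY\,w_n^{-1}=w_n\,{}^tY\,w_n=\tilde\varepsilon(Y).
\end{equation*}
So throughout the proof, $\varepsilon(Y^{-1})$ and $\tilde\varepsilon(Y)$ can be used interchangeably; the target identities become $(I-X'Y^{-1}X)X'=-X'Y^{-1}\tilde\varepsilon(Y)$ and $X(I-X'Y^{-1}X)=-\tilde\varepsilon(Y)Y^{-1}X$.

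For (a), I would distribute to get $X'-X'Y^{-1}XX'$, then substitute $XX'=Y+\tilde\varepsilon(Y)$ from (1.1):
\begin{equation*}
X'-X'Y^{-1}\bigl(Y+\tilde\varepsilon(Y)\bigr)=X'-X'-X'Y^{-1}\tilde\varepsilon(Y)=-X'Y^{-1}\tilde\varepsilon(Y),
\end{equation*}
which is the desired expression after the substitution $\tilde\varepsilon(Y)=\varepsilon(Y^{-1})$. For (b) the same idea works on the other side: distributing gives $X-XX'Y^{-1}X$, and substituting $XX'=Y+\tilde\varepsilon(Y)$ collapses it to $-\tilde\varepsilon(Y)Y^{-1}X=-\varepsilon(Y^{-1})Y^{-1}X$.

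There really is no main obstacle here; the lemma is a formal consequence of relation (1.1) together with the elementary observation that $\tilde\varepsilon(Y)=\varepsilon(Y^{-1})$. The only point worth flagging is that one must know $Y$ is invertible so that $Y^{-1}$ makes sense and so that $\varepsilon(Y^{-1})$ is defined; this is exactly the hypothesis of the lemma.
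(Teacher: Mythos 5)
Your computation is correct, and it is exactly the direct verification that the paper omits here (the authors simply cite the analogous lemmas in their earlier papers, whose proofs are the same substitution of $XX'=Y+\tilde\varepsilon(Y)$ after expanding, together with the observation $\varepsilon(Y^{-1})=\tilde\varepsilon(Y)$). Nothing is missing.
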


\begin{proof}
The proof is in essence that of  Corollary 3.2 and Proposition 5.4(b) of \cite{Sh3}, Lemma 3.3 of \cite{GS1} or Lemma 2.3 of \cite{GS2}
\end{proof}

\begin{lemma}Suppose $X\in M_{n\times 2m}(F)$, and $U=F^n X$.
Let $H_X=\{h\in SO_{2m+1}(F)|Xh=g_h X$ for some $g_h\in GL_n(F)\}$.
If $\{0\}\subsetneq U\subsetneq F^{2m+1}$, then $H_X\subsetneq SO_{2m+1}(F)$.
\end{lemma}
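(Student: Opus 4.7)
The plan is to identify $H_X$ with the stabilizer in $SO_{2m+1}(F)$ of the row space $U$ of $X$ (under right multiplication), and then to show that this stabilizer is a proper subgroup by producing an explicit element of $SO_{2m+1}(F)$ that moves $U$.

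First I would verify that $h \in H_X$ if and only if $Uh = U$. The forward direction is immediate, since $g_h \in GL_n(F)$ preserves row spans. For the reverse direction, after permuting rows of $X$ one may assume $X = \begin{pmatrix} A\\ CA\end{pmatrix}$ with the $k = \dim U$ rows of $A$ forming a basis of $U$; since $Uh = U$, there is a unique $D \in GL_k(F)$ with $Ah = DA$, and the block matrix
\[
g_h = \begin{pmatrix} D & 0\\ CD - C & I\end{pmatrix}
\]
lies in $GL_n(F)$ and satisfies $g_h X = Xh$.

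Next I would produce $h \in SO_{2m+1}(F)$ with $Uh \neq U$. Fix any nonzero $u \in U$ and let $c = u\, w_{2m+1}\, {}^t u$. I claim the $SO_{2m+1}(F)$-orbit of $u$ under right multiplication equals the set of all nonzero vectors in $F^{2m+1}$ of norm $c$. By Witt's extension theorem, $O_{2m+1}(F)$ acts transitively on vectors of any fixed norm (restricting to nonzero vectors when the norm is $0$); transitivity survives passage to $SO_{2m+1}(F)$, since one can compose any determinant $-1$ isometry in the stabilizer with a reflection $s_w$ for some non-isotropic $w \in u^\perp$, which exists because the $2m$-dimensional space $u^\perp$ cannot be totally isotropic (totally isotropic subspaces of $F^{2m+1}$ have dimension at most $m$). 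This orbit spans $F^{2m+1}$: for $c = 0$ because the isotropic vectors of $w_{2m+1}$ span $F^{2m+1}$ (the form has Witt index $m \geq 1$), and for $c \neq 0$ because a nondegenerate affine quadric cannot lie in a proper hyperplane. Therefore the orbit of $u$ is not contained in the proper subspace $U$, so some $h \in SO_{2m+1}(F)$ sends $u$ outside $U$, giving $Uh \neq U$ and $h \notin H_X$.

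The main obstacle is the last step: combining Witt-theoretic transitivity with the spanning property of level sets of the quadratic form defined by $w_{2m+1}$. The identification of $H_X$ with the stabilizer of $U$ is routine linear algebra, with only a minor subtlety when the rows of $X$ are linearly dependent, handled by the block-matrix construction above.
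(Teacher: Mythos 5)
Your proof is correct and follows the same route as the paper: show $U$ is stable under every $h\in H_X$ (since $uh=vXh=vg_hX\in U$), then conclude from the fact that $SO_{2m+1}(F)$ leaves no proper nonzero subspace of $F^{2m+1}$ invariant. The only difference is that you supply the Witt-theorem argument for that irreducibility (and the unneeded converse identification of $H_X$ with the stabilizer of $U$), whereas the paper simply asserts "we know $U=\{0\}$ or $U=F^{2m+1}$."
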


\begin{proof}Suppose $h\in H_X$.
Let $u\in U$, and choose $v\in F^n$, with $u=vX$.
Then $uh=vXh=vg_h X\in U$.
Thus, $U$ is $H_X$--invariant.
If $H_X=SO_{2m+1}(F)$, then we know $U=\{0\}$, or $U=F^{2m+1}$.
\end{proof}

\begin{lemma}If $\{X\}\in GL_n(F)\backslash M_{n\times 2m+1}(F)/SO_{2m+1}(F)$, and $(X,Y)$ satisfies (1.1), then $X(I-X' Y^{-1} X)=-\varepsilon(Y^{-1})Y^{-1}X$, and $(-1)^n (I-X' Y^{-1} X)\in H_X$.
\end{lemma}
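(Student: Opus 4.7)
The first assertion is literally part (b) of Lemma 1.5, so nothing remains to prove there; my plan is simply to invoke it directly.

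For the membership $(-1)^n(I - X'Y^{-1}X) \in H_X$, the strategy is to assemble the two ingredients the previous results already provide. From Corollary 1.2, applied to the rational solution $(X,Y)$ of (1.1) with $Y \in GL_n(F)$, we know that $h := (-1)^n(I - X'Y^{-1}X)$ lies in $SO_{2m+1}(F)$, so half of the definition of $H_X$ is automatic. It remains to exhibit an element $g_h \in GL_n(F)$ with $Xh = g_h X$.

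This is where I would multiply the identity just established (Lemma 1.5(b)) by $(-1)^n$: from $X(I - X'Y^{-1}X) = -\varepsilon(Y^{-1})Y^{-1}X$ one obtains
\begin{equation*}
X h = X \bigl[(-1)^n(I - X'Y^{-1}X)\bigr] = -(-1)^n \varepsilon(Y^{-1})Y^{-1}\, X.
\end{equation*}
Setting $g_h := -(-1)^n \varepsilon(Y^{-1})Y^{-1}$, which belongs to $GL_n(F)$ because $Y$ is invertible and $\varepsilon$ preserves invertibility, we obtain precisely the intertwining relation $Xh = g_h X$ required to conclude $h \in H_X$.

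So the proof reduces to a one-line combination of Corollary 1.2 and Lemma 1.5(b); there is no real obstacle, only bookkeeping of the signs $(-1)^n$. The only subtlety worth flagging for the reader is that the hypothesis $Y \in GL_n(F)$ is needed (inherited from Lemma 1.5), so $g_h = -(-1)^n \varepsilon(Y^{-1})Y^{-1}$ indeed makes sense and lies in $GL_n(F)$.
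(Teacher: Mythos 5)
Your proof is correct and is essentially the paper's own argument: the paper simply states that the lemma ``follows from Lemma 1.5 and the definition of $H_X$,'' which is exactly the combination you spell out (with Corollary 1.2 supplying the membership in $SO_{2m+1}(F)$ and Lemma 1.5(b) supplying the intertwiner $g_h=-(-1)^n\varepsilon(Y^{-1})Y^{-1}$). Nothing further is needed.
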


\begin{proof}This follows from Lemma 1.5 and the definition of $H_X$.
\end{proof}

\begin{lemma}Fix $X\in M_{n\times 2m+1}(F)$ and let $U=F^n X$.
Consider $F^{2m+1}$ as a symmetric vector space with respect to $w_{2m+1}$.
If $U$ is non--degenerate, then the right stabilizer $H'_X$ of $U$ in $G(m)$ is the stabilizer of an involution of $O_{2m+1}(F)$.
If $U$ is degenerate, then $H'_X$ is contained in a proper parabolic subgroup of $G(m)$.
\end{lemma}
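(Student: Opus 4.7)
The plan is to translate the stabilizer condition $Uh=U$ into a condition on the orthogonal decomposition induced by $U$, and then handle the non-degenerate and degenerate cases separately. The key preliminary observation I would establish first is that every $h\in O_{2m+1}(F)$ preserves the form $w_{2m+1}$, hence satisfies $(Uh)^{\perp}=U^{\perp}h$; consequently $h\in H'_X$ automatically stabilizes $U^{\perp}$ as well, and in particular stabilizes the radical $U_{0}=U\cap U^{\perp}$.

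In the non-degenerate case, I would write $F^{2m+1}=U\oplus U^{\perp}$ as an orthogonal direct sum and let $\theta$ be the linear involution that is $+1$ on $U$ and $-1$ on $U^{\perp}$. Because the summands are mutually orthogonal with respect to $w_{2m+1}$, the map $\theta$ preserves $w_{2m+1}$, so $\theta\in O_{2m+1}(F)$ is an involution. An element $h\in SO_{2m+1}(F)$ centralizes $\theta$ if and only if it stabilizes each of its eigenspaces, i.e., both $U$ and $U^{\perp}$; by the preliminary observation this is equivalent to $h\in H'_X$. Hence $H'_X$ is precisely the centralizer of $\theta$ in $G(m)$, which is what is meant by the stabilizer of the involution $\theta$.

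In the degenerate case, $U_0\neq 0$, and since $w_{2m+1}$ is non-degenerate we have $U\subsetneq F^{2m+1}$ (else $U^{\perp}=0$ and $U_0=0$). The space $U_0$ is therefore a proper nonzero totally isotropic subspace of $F^{2m+1}$, isotropy being automatic from $U_0\subseteq U^{\perp}$. By the preliminary observation, every $h\in H'_X$ stabilizes $U_0$, so $H'_X$ is contained in the stabilizer of $U_0$ in $G(m)$, which is a proper parabolic subgroup of $SO_{2m+1}(F)$ by the standard description of parabolic subgroups of a special orthogonal group as the stabilizers of isotropic flags.

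The argument is essentially bookkeeping about bilinear forms and parabolics of orthogonal groups, so I do not anticipate any genuine obstacle. The two points that require a moment's care are the verification that $\theta$ actually lies in $O_{2m+1}(F)$, which reduces to the orthogonality of the direct-sum decomposition, and the observation that a degenerate $U$ is automatically proper, guaranteeing that $U_0$ is both nonzero and a proper isotropic subspace so that its stabilizer really is a proper parabolic.
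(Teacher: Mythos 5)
Your proof is correct and is essentially the argument the paper invokes by reference to Lemma 4.5 of \cite{Sh3}, Lemma 3.6 of \cite{GS1}, and Lemma 2.7 of \cite{GS2}: in the non-degenerate case one takes the involution that is $+1$ on $U$ and $-1$ on $U^{\perp}$ and identifies $H'_X$ with its centralizer, and in the degenerate case the radical $U\cap U^{\perp}$ is a nonzero isotropic subspace stabilized by $H'_X$, whose stabilizer is a proper parabolic.
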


\begin{proof} The argument is the same as the corresponding statements of
Lemma 4.5 of \cite{Sh3}, Lemma 3.6 of \cite{GS1}, and Lemma 2.7 of \cite{GS2}.
\end{proof}

\section{The norm correspondence and the case $n=2m+1$}

\begin{deff}Let \(\{Y^{-1}\}\) be an $\varepsilon$--conjugacy class in ${\cal N}.$
Denote by \(N_\varepsilon(\{Y^{-1}\})\) the conjugacy classes $\{(-1)^n(I-X_1'Y_1^{-1}X_1)\}$ for all solutions
$(X_1,Y_1')$ to (1.1) with $Y_1^{-1}\in \{Y^{-1}\}$. This correspondence is finite to one.
\end{deff}

\begin{prop}Suppose $n < 2m$, and $X\in M_{n\times 2m+1}(F)$.
Fix an invertible $Y$ with $(X,Y)$ a solution to (1.1).
Then $(-1)^n (I-X' Y^{-1} X)$ belongs to a proper parabolic subgroup or a proper centralizer of a singular element of $O_{2m+1}(F)$.
Furthermore, $\{N_\varepsilon(\{Y^{-1}\})\}$ is never regular elliptic.
\end{prop}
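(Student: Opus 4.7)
The idea is to exploit the invariant subspace $U = F^n X \subseteq F^{2m+1}$, apply Lemmas 1.7 and 1.8 to it, and finish the regular-elliptic claim by a direct rank count. Since $\dim U \le n < 2m < 2m+1$, the subspace $U$ is a proper subspace of $F^{2m+1}$. We dispose of the trivial case $X=0$ (for which the element equals $(-1)^n I$ and lies in any proper parabolic when it is actually in $G(m)$) and henceforth assume $X \ne 0$, so that $\{0\} \subsetneq U \subsetneq F^{2m+1}$. By Lemma 1.7, $g := (-1)^n(I - X'Y^{-1}X)$ lies in $H_X \subseteq H'_X$, so its position is controlled by the right stabilizer $H'_X$ of $U$ in $G(m)$.

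Lemma 1.8 now presents two cases. If $U$ is degenerate, then $U \cap U^\perp$ is a non-zero totally isotropic subspace of $F^{2m+1}$ which is preserved by $H'_X$ (since elements of the orthogonal group send $U^\perp$ to $(Uh)^\perp = U^\perp$), so $H'_X$, and hence $g$, lies in the proper parabolic of $G(m)$ stabilizing this isotropic subspace. If $U$ is non-degenerate, then $H'_X$ is the centralizer in $G(m)$ of the involution $\sigma \in O_{2m+1}(F)$ that acts as $+1$ on $U$ and $-1$ on $U^\perp$. The bound $\dim U^\perp \ge 2m+1-n \ge 2$ forces $\sigma$ to have a repeated eigenvalue, hence to be singular, and $U \ne 0$ keeps $\sigma$ non-central, so $g$ lies in the proper centralizer of a singular element of $O_{2m+1}(F)$. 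This establishes the first assertion.

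For the second assertion, observe that the $(2m+1)\times(2m+1)$ matrix $X'Y^{-1}X$ factors through $F^n$ and so has rank at most $n$; consequently $I - X'Y^{-1}X$ has eigenvalue $1$ with multiplicity at least $2m+1-n \ge 2$, and $g$ has $(-1)^n$ as a repeated eigenvalue. Hence $g$ is not regular semisimple in $SO_{2m+1}(F)$, and in particular $N_\varepsilon(\{Y^{-1}\})$ is never regular elliptic. The main bookkeeping obstacle is to check that each branch of Lemma 1.8 delivers a genuinely proper subgroup; this reduces to the two dimension bounds $0 < \dim U \le n$ and $\dim U^\perp \ge 2m+1-n \ge 2$, both immediate once the trivial case $X=0$ has been set aside.
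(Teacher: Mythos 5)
Your proof is correct and follows essentially the same route as the paper's: the first claim via $U=F^nX\subsetneq F^{2m+1}$ together with Lemmas 1.7 and 1.8, and the second via the rank bound $\operatorname{rank}(X'Y^{-1}X)\le n$ forcing $(-1)^n$ to be a repeated eigenvalue. You simply fill in details the paper leaves implicit (the degenerate $X=0$ case and why each branch of Lemma 1.8 yields a genuinely proper subgroup).
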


\begin{proof}Since $n < 2m$, and $X\in M_{n\times 2m+1}$, we have $F^n X \subsetneq F^{2m+1}$, and thus by Lemma 1.5(b) and Lemma 1.7, we have the first result.
Since $(-1)^n (I-X' Y^{-1} X)$ has at least $2m+1-n$ eigenvalues equal to $\pm 1$, the second result follows.
\end{proof}

\begin{lemma}Suppose $S\in M_{2m+1}(F)$, with $-(I+S)\in G(m)$.
Then there is some $Y\in GL_{2m+1}(F)$ and a projection $X\in M_{2m+1}(F)$ with $S=-X' Y^{-1} X=Y^{-1} X=-X' Y^{-1}$.
\end{lemma}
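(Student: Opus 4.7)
The plan is to recast the desired identities as a duality statement about $S$, and then build $X$ and $Y$ by a single choice of complement. Set $T=-(I+S)\in SO_{2m+1}(F)$, and for any $A\in M_{2m+1}(F)$ write $A^*=w_{2m+1}\,{}^{t}A\,w_{2m+1}$, so that $A'=-A^*$. Since $T\in O_{2m+1}(F)$ satisfies $T^*=T^{-1}$, one has $S^*=-I-T^{-1}$, and therefore $Sv=0\iff Tv=-v\iff T^{-1}v=-v\iff S^*v=0$. Hence $\ker S=\ker S^*$, and consequently $\operatorname{im} S=(\ker S)^\perp$, where $\perp$ denotes orthogonal complement with respect to $w_{2m+1}$.

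With this in hand the construction is essentially forced. Choose any linear complement $V$ of $\ker S$ in $F^{2m+1}$, and let $X$ be the projection onto $V$ along $\ker S$, so $X^2=X$, $\operatorname{im} X=V$, and $\ker X=\ker S$. Then $-X'=X^*$ is the projection onto $(\ker X)^\perp=\operatorname{im} S$ along $(\operatorname{im} X)^\perp=V^\perp$. Taking perpendiculars of $V\oplus\ker S=F^{2m+1}$ yields $V^\perp\oplus\operatorname{im} S=F^{2m+1}$, so $\dim V^\perp=\dim\ker S$, and I fix any linear isomorphism $\phi\colon\ker S\to V^\perp$. I then define $Y^{-1}\in M_{2m+1}(F)$ to agree with $S|_V$ on $V$ and with $\phi$ on $\ker S$; its image is $\operatorname{im} S+V^\perp=F^{2m+1}$, so $Y\in GL_{2m+1}(F)$.

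The three equalities are then routine to check. For $v\in V$, $Y^{-1}Xv=Y^{-1}v=Sv$, while for $k\in\ker S=\ker X$ both sides vanish, so $S=Y^{-1}X$. For $v\in V$, $-X'Y^{-1}v=X^*(Sv)=Sv$ because $Sv\in\operatorname{im} S=\operatorname{im} X^*$; for $k\in\ker S$, $-X'Y^{-1}k=X^*\phi(k)=0$ because $\phi(k)\in V^\perp=\ker X^*$, matching $Sk=0$. Thus $S=-X'Y^{-1}$, and then $-X'Y^{-1}X=SX=S$ follows at once, since $S$ kills $\ker X=\ker S$ and $X$ restricts to the identity on $V$.

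The only real obstacle is the orthogonality identity $\operatorname{im} S=(\ker S)^\perp$, which must be available even when $T$ has a nontrivial Jordan block at the eigenvalue $-1$ so that $\ker S$ and $\operatorname{im} S$ are not transverse. This rests on the sharper fact $T^*=T^{-1}$, not merely on $T\in SO_{2m+1}$ as a group element; once it is in hand, no Jordan-form analysis is required and the entire construction depends only on a choice of linear complement of $\ker S$.
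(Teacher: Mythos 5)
You do prove the literal statement, and the duality bookkeeping is correct: $\ker S=\ker S^*$ follows from $T^*=T^{-1}$, hence $\operatorname{im}S=(\ker S)^\perp$, the map $-X'=X^*$ is indeed the dual projection, and the three identities $S=Y^{-1}X=-X'Y^{-1}=-X'Y^{-1}X$ all check out for your $(X,Y)$. The gap is that the pair produced by this lemma must in addition satisfy equation (1.1), $Y+\tilde\varepsilon(Y)=XX'$. That requirement is not printed in the statement, but it is the point of the lemma: Definition 2.4 declares $\{Y^{-1}\}$ to be the \emph{canonical section of the norm correspondence} of $\{-(I-X'Y^{-1}X)\}$, which presupposes $\{Y^{-1}\}\in{\cal N}$ with $(X,Y)$ a solution of (1.1); Lemmas 2.9 and 2.15 and Proposition 4.1 all take as input ``a projection $X$ satisfying (1.1) with $Y$''; and the results to which the paper defers the proof (Corollary 5.7 of \cite{Sh3}, Lemma 3.8 of \cite{GS1}, Lemma 2.10 of \cite{GS2}) all carry the quadratic relation as part of the construction. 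Your argument never addresses (1.1), and for most of your choices it is false.

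Concretely, writing $A^*=w_{2m+1}\,{}^tA\,w_{2m+1}$ as you do, equation (1.1) reads $Y+Y^*=-XX^*$; conjugating by $Y^{-1}$ and using $S=Y^{-1}X=X^*Y^{-1}$ together with $S+S^*=-SS^*$ (which is exactly the condition $-(I+S)\in O_{2m+1}(F)$) shows that (1.1) is equivalent to $\Phi+\Phi^*=0$, where $\Phi=Y^{-1}-S$ is the map vanishing on $V$ and equal to $\phi$ on $\ker S$. In other words, the bilinear form $(k_1,k_2)\mapsto\langle k_1,\phi(k_2)\rangle$ on $\ker S$ must be alternating as well as nondegenerate, i.e.\ symplectic. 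A generic isomorphism $\phi$ fails this, so your $Y$ generically violates (1.1) whenever $S$ is singular (when $S$ is invertible, $X=I$ and $Y^{-1}=S$ do satisfy (1.1), precisely by $S+S^*=-SS^*$, so the problem lies entirely in the singular directions). To repair the argument you must (i) show that $\dim\ker S=\dim\ker(T+I)$ is even for $T\in SO_{2m+1}(F)$ --- the $(-1)$-eigenspace counterpart of Lemma 2.5, where the determinant condition finally enters --- and (ii) use the perfect pairing between $\ker S$ and $V^\perp$ to choose $\phi$ realizing a symplectic form. With that choice your construction does go through and yields (1.1) as well.
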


\begin{proof}
The proof follows the arguments given in Corollary 5.7 of \cite{Sh3}, Lemma 3.8 of \cite{GS1}, and Lemma 2.10 of \cite{GS2}.
\end{proof}

\begin{deff}When $(X,Y)$ satisfies the hypotheses of Lemma 2.2, we say that $\{Y^{-1}\}$ is the {\emph canonical section} of the norm correspondence of $\{Z\}=\{-(I-X' Y^{-1} X)\}$.
\end{deff}

\begin{lemma}If $h\in SO_{2m+1}(F)$, then the dimension of the $h$--fixed subspace in $F^{2m+1}$ is odd.
\end{lemma}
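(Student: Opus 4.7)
The plan is to compare the algebraic multiplicity $a$ of $1$ as an eigenvalue of $h$ with the geometric multiplicity $\dim V^h$, where $V = F^{2m+1}$. I will show $a$ is odd via a palindromic functional equation for the characteristic polynomial, and then use a Cayley transform on the generalized $1$-eigenspace to show $\dim V^h \equiv a \pmod{2}$.

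First, let $p(t) := \det(tI - h)$. From ${}^{t}h\, w_{2m+1}\, h = w_{2m+1}$ it follows that $h$ and $h^{-1}$ are conjugate in $GL_{2m+1}(F)$; combined with $\det h = 1$ this gives the functional equation $p(t) = -t^{2m+1} p(t^{-1})$. In particular $p(1) = 0$, so $1$ is always a root. Writing $p(t) = (t-1)^a(t+1)^b q(t)$ with $q(\pm 1) \neq 0$, the functional equation forces the roots of $q$ (over $\overline{F}$) to pair up as $\lambda \leftrightarrow \lambda^{-1}$ with equal multiplicities; hence $\deg q$ is even and $a+b$ is odd. Each such pair contributes a factor $1$ to $\det h$, the $+1$-eigenvalues contribute $1$, and the $-1$-eigenvalues collectively contribute $(-1)^b$; thus $\det h = (-1)^b$, and $\det h = 1$ forces $b$ to be even, so $a$ is odd.

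Next, let $V_1 := \ker(h-I)^{2m+1}$ be the generalized $1$-eigenspace, of $F$-dimension $a$. A standard argument shows that generalized $\lambda$- and $\mu$-eigenspaces of the orthogonal transformation $h$ are mutually orthogonal whenever $\lambda \mu \neq 1$; in particular $\langle \cdot, \cdot \rangle$ restricts non-degenerately to $V_1$, and $V^h = \ker((h - I)|_{V_1})$. On $V_1$, $h$ is unipotent, so $h + I$ is invertible there (only eigenvalue $2$), and the Cayley transform $N := (h - I)(h + I)^{-1}$ is well-defined. A direct computation from ${}^{t}h = w_{2m+1}\, h^{-1}\, w_{2m+1}^{-1}$ shows that $N$ is skew with respect to the form, i.e.\ $\langle Nv, w \rangle = -\langle v, Nw \rangle$. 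Consequently $\omega(v, w) := \langle v, Nw \rangle$ is an alternating form on $V_1$ with radical $(\mathrm{Im}\, N)^{\perp}$, so $\mathrm{rank}\, \omega = \mathrm{rank}\, N$. Since alternating forms have even rank, $\mathrm{rank}((h-I)|_{V_1}) = \mathrm{rank}\, N$ is even, and $\dim V^h = a - \mathrm{rank}((h - I)|_{V_1})$ is odd minus even, hence odd.

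The main obstacle is verifying the orthogonality of generalized eigenspaces for eigenvalues $\lambda, \mu$ with $\lambda \mu \neq 1$. This requires a short induction on the nilpotent levels of $h - \lambda$ and $h - \mu$, bootstrapped from the eigenvector case where $\langle hv, hw \rangle = \langle v, w \rangle$ directly gives $(1 - \lambda\mu)\langle v, w\rangle = 0$. Everything else — the functional equation, the skewness of $N$, and the even rank of an alternating form — is routine linear algebra.
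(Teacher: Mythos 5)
Your proof is correct, but it takes a genuinely different route from the paper's. The paper disposes of this in three lines: it embeds $h\mapsto h_0=\begin{pmatrix}h&0\\0&1\end{pmatrix}\in SO_{2m+2}(F)$, invokes Lemma 5.8 of [Sh3] (the even orthogonal analogue: the fixed space of an element of $SO_{2m+2}(F)$ is even dimensional), and reads off the parity from $\dim(F^{2m+2})^{h_0}=\dim(F^{2m+1})^h+1$. Your argument is instead self-contained: the functional equation $p(t)=-t^{2m+1}p(1/t)$ together with $\det h=1$ gives odd algebraic multiplicity $a$ of the eigenvalue $1$, and the Cayley transform $N=(h-I)(h+I)^{-1}$ on the generalized $1$-eigenspace $V_1$ (non-degenerate for the form, since $V_1$ pairs trivially with the generalized eigenspaces for $\mu\neq1$) yields an alternating form $\omega(v,w)=\langle v,Nw\rangle$ whose radical is exactly $\ker(h-I)|_{V_1}=V^h$, so $\dim V_1-\dim V^h=\operatorname{rank}\omega$ is even and $\dim V^h\equiv a\pmod 2$. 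All the steps check out in characteristic zero ($h+I$ is invertible on $V_1$ because $h$ is unipotent there, and skewness of $N$ follows from $(h\pm I)^*=\pm h^{-1}(h\mp(-I))$, more precisely $N^*=-N$ by a one-line adjoint computation). What your approach buys is independence from [Sh3] and, as a by-product, the sharper statement $\dim V^h\equiv a\pmod 2$ where $a$ is the (odd) algebraic multiplicity; what it costs is length — the paper's reduction to the even case is essentially free given that the prequel paper already proved the $SO_{2m}$ statement, which is the pattern followed throughout Sections 2--3 here.
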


\begin{proof}Let $h\mapsto h_0=\begin{pmatrix} h&0\\ 0&1\end{pmatrix}\in SO_{2m+2}(F)$.
By Lemma 5.8 of
\cite{Sh3}, we know $(F^{2m+2})^{h_0}$ is even dimensional.
Since $\dim (F^{2m+2})^{h_0}=\dim (F^{2m+1})^h+1$, and $\dim (F^{2m+2})^{h_0}\geq 2,$ we have the lemma.
\end{proof}

\begin{corollary} Let $h\in SO_{2m+1}(F)$ be semisimple.
Then $h$ is conjugate to an element of the form $\bmatrix h_{11}&0&h_{12}\\ 0&1&0\\ h_{21}&0&h_{22}\endbmatrix$, with each $h_{ij}\in M_m(F)$.
\end{corollary}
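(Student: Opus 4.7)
The plan is to leverage Lemma 2.5 together with the semisimplicity of $h$ to produce a basis of $F^{2m+1}$ in which the matrix of $h$ has the prescribed block shape.

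First I would invoke Lemma 2.5 to see that the fixed subspace $V^h := (F^{2m+1})^h$ has odd, hence strictly positive, dimension, and pick a non-zero vector $v \in V^h$. Since $h$ is semisimple, $F^{2m+1}$ splits as a direct sum of $h$-stable isotypic components (the kernels $\ker p_i(h)$, where $p_i$ runs over the distinct irreducible factors of the minimal polynomial of $h$). Combining any linear complement of $Fv$ inside $V^h = \ker(h-1)$ with the direct sum of the isotypic components corresponding to the factors of the minimal polynomial other than $X-1$ then produces a $(2m)$-dimensional $h$-invariant subspace $V'$ satisfying $F^{2m+1} = Fv \oplus V'$.

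Next I would pick any basis $(v_1, \ldots, v_m, v_{m+2}, \ldots, v_{2m+1})$ of $V'$ and set $v_{m+1} := v$. In the basis $(v_1, \ldots, v_{2m+1})$ of $F^{2m+1}$, the identity $h(v_{m+1}) = v_{m+1}$ forces the $(m+1)$-th column of the matrix of $h$ to be $e_{m+1}$; the inclusion $h(V') \subseteq V'$ forces the $(m+1, j)$-entry to vanish for every $j \ne m+1$, while $h(v_{m+1}) = v_{m+1}$ puts a $1$ in the $(m+1,m+1)$ position. Consequently $h$ is conjugate, via the corresponding $GL_{2m+1}(F)$ change-of-basis matrix, to a matrix of the shape asserted in the statement, with a $1$ in the centre.

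The only substantive input here is the assertion that $V^h$ is non-zero, which is precisely the content of Lemma 2.5; after that, the argument is a routine application of the fact that semisimple operators admit invariant complements to any invariant subspace, so no real obstacle stands in the way. Note that one cannot in general arrange the conjugating matrix to lie in $SO_{2m+1}(F)$, since doing so would require $V^h$ to contain a vector of norm $1$, which need not hold; however, the statement of the corollary only asserts conjugacy of matrices, which is exactly what the above construction delivers.
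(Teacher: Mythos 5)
Your construction is correct as far as it goes, and it fills in exactly the one\-/line argument the paper intends: Lemma 2.5 supplies a nonzero fixed vector, semisimplicity supplies an $h$\-/invariant complement, and the change of basis produces the stated block shape. Under the reading in which ``conjugate'' means conjugate in $GL_{2m+1}(F)$, your proof is complete and follows the paper's route.

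The problem is your closing remark, which is correct but which you dismiss too quickly: it points at a real issue rather than resolving one. As the corollary is used in the proof of Proposition 2.7, the conjugation must be implemented by an element of the orthogonal group: the very next line of that proof declares $-\begin{pmatrix} h_{11}&h_{12}\\ h_{21}&h_{22}\end{pmatrix}$ to lie in $SO_{2m}(F)$ with respect to $w_{2m}$, which presupposes that the conjugated $h$ still preserves $w_{2m+1}$, that the middle basis vector is fixed and anisotropic of square norm, and that its orthogonal complement is the standard split space; moreover the reduction ``we may assume'' is legitimate only because the conclusion of Proposition 2.7 is invariant under $SO_{2m+1}(F)$\-/conjugation (the computation following Lemma 1.3), not under arbitrary $GL_{2m+1}(F)$\-/conjugation. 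For the orthogonal version one needs exactly what you observe: $V^h=\ker(h-1)$ must represent a square. Note that $V^h$ is automatically nondegenerate (it is orthogonal to $\operatorname{im}(h-1)$), so this is automatic when $\dim V^h\ge 5$, but it can genuinely fail when $\dim V^h$ is $1$ or $3$: for a regular element of the elliptic torus of split $SO_3(F)$ attached to $E=F(\sqrt{\delta})$, the fixed line is spanned by a vector of discriminant $\delta\notin (F^\times)^2$, so no element of $O_3(F)$ carries it to $F e_2$. Thus either the corollary is read weakly, in which case your proof is complete but the corollary no longer does the job asked of it in Proposition 2.7, or it is read strongly, in which case your proof (and the paper's one\-/line proof) has a gap in the low\-/dimensional fixed\-/space cases. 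You should say explicitly in which group your conjugating element lives, and, if the strong version is what is needed, either add the hypothesis that $\ker(h-1)$ represents a square or explain why it does in the situation at hand.
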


\begin{proof}This follows from $h$ having fixed space of dimension at least one.
\end{proof}

\begin{prop}Let $h\in SO_{2m+1}(F)$ be semisimple.
Then there is a $Y\in GL_{2m+1}(F)$ and an $X\in M_{2m+1}(F)$ so that $(X,Y)$ satisfies (1.1), and $h=-(I-X' Y^{-1} X)$.
\end{prop}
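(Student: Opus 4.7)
The plan is to reduce $h$ to the block form supplied by Corollary 2.6 and then extract the desired pair from the canonical section construction of Lemma 2.2. Since the assertion is invariant under simultaneous conjugation $h \mapsto g h g^{-1}$ together with the twisted change $X \mapsto X g$, $Y \mapsto g Y \varepsilon(g)^{-1}$ described just after (1.1), I may replace $h$ by any element in its $SO_{2m+1}(F)$-conjugacy class. Applying Corollary 2.6, I reduce to
\[
h = \begin{pmatrix} h_{11} & 0 & h_{12} \\ 0 & 1 & 0 \\ h_{21} & 0 & h_{22} \end{pmatrix}.
\]

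Next, set $S = -(I + h) \in M_{2m+1}(F)$. Then $-(I + S) = h$ lies in $G(m) = SO_{2m+1}(F)$, so the hypothesis of Lemma 2.2 is satisfied. The lemma supplies $Y \in GL_{2m+1}(F)$ and a projection $X \in M_{2m+1}(F)$ with
\[
S = -X' Y^{-1} X = Y^{-1} X = -X' Y^{-1}.
\]
The first of these equalities rearranges immediately to the target identity $h = -(I - X' Y^{-1} X)$, so this step is essentially formal.

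What remains, and is the technical heart of the argument, is to verify that $(X, Y)$ is a rational solution of (1.1): $Y + \tilde\varepsilon(Y) = X X'$. From the other two identities of Lemma 2.2 I get $X = Y S$ and $X' = -S Y$, hence $X X' = -Y S^{2} Y$, while the projection condition $X^{2} = X$ becomes $S Y S = S$. I would then exploit the block structure from Corollary 2.6: the middle row and column of $Y$, corresponding to the one-dimensional fixed subspace of $h$, can be chosen freely and will be used to absorb the discrepancy between $Y + \tilde\varepsilon(Y)$ and $-Y S^{2} Y$, while the $2m$-dimensional non-fixed block is forced by the analogous even-dimensional identity, in the spirit of Lemma 5.9 of \cite{Sh3}.

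The main obstacle I expect is precisely this last bookkeeping: realizing $Y$ concretely so that $S Y S = S$ and $Y + \tilde\varepsilon(Y) = -Y S^{2} Y$ hold simultaneously, while respecting the anti-diagonal form $w_{2m+1}$ used in $\tilde\varepsilon$ and keeping the signs straight (recall $n = 2m+1$ is odd, so $(-1)^{n} = -1$). The oddness of the ambient dimension is exactly what supplies the extra middle slot from Corollary 2.6, and this is where the construction becomes available in a way that was not accessible in the previous even-dimensional cases.
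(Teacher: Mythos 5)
Your reduction to the block form of Corollary 2.6 is fine, and you have correctly located the crux: producing a pair that actually satisfies (1.1). But that is exactly the step your argument does not supply, and the appeal to the canonical-section lemma (Lemma 2.3 in the paper's numbering, which you cite as Lemma 2.2) does not close it. As stated, that lemma only asserts the identities $S=-X'Y^{-1}X=Y^{-1}X=-X'Y^{-1}$ for some projection $X$ and invertible $Y$; it does not assert $Y+\tilde\varepsilon(Y)=XX'$. If it did, Proposition 2.7 would be an immediate corollary for arbitrary $h\in G(m)$, with no need for semisimplicity or for Corollary 2.6 at all --- which is not how the paper proceeds (indeed the paper derives the projection refinement, Corollary 2.8, \emph{from} Proposition 2.7, not the other way around). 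So after invoking the lemma you are holding a pair $(X,Y)$ about which you know nothing relevant to (1.1), and the consequences you extract from it ($XX'=-YS^2Y$ and $SYS=S$) do not by themselves yield $Y+\tilde\varepsilon(Y)=XX'$.

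Your sketch for repairing this --- choose the middle row and column of $Y$ ``freely'' to absorb the discrepancy, with the $2m$-dimensional block ``forced by the analogous even-dimensional identity'' --- is not a proof and also misplaces the work. The middle entries are not free: in the paper's construction $X$ carries a $1$ and $Y$ a $-1/2$ in the middle slot precisely so that the middle entries of $Y+\tilde\varepsilon(Y)$ and of $XX'$ agree. And the $2m\times 2m$ block is not forced by anything already at hand: one must invoke the surjectivity of the norm onto semisimple classes of $SO_{2m}$ from \cite{Sh3}, applied to $h_0=-\begin{pmatrix} h_{11}&h_{12}\\ h_{21}&h_{22}\end{pmatrix}$ (note the sign, which is what reconciles $h=-(I-X'Y^{-1}X)$ with the even-dimensional identity $h_0=I-X_0'Y_0^{-1}X_0$), to obtain a pair $(X_0,Y_0)$ satisfying the $2m$-dimensional analogue of (1.1). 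One then borders $X_0$ and $Y_0$ by the extra middle row and column and verifies (1.1) and the norm identity by direct computation. That bordering argument is the actual content of the proposition, and it is absent from your writeup.
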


\begin{proof}By Corollary 2.6 we may assume $h=\bmatrix h_{11}&0&h_{12}\\ 0&1&0\\ h_{21}&0&h_{22}\endbmatrix$, with
$h_{ij}\in M_m(F)$. Let $h_0=-\bmatrix h_{11}&h_{12}\\ h_{21}&h_{22}\endbmatrix\in SO_{2m}(F)$.
Then, by 
\cite{Sh3}, there is a $Y_0\in GL_{2m}(F)$ and a projection $X_0\in M_{2m}(F)$ with
\begin{eqnarray*}
I-X'_0 Y_0^{-1} X_0&=&h_0,\text{ and}\\
Y_0+\tilde\varepsilon(Y_0)&=&X_0 X'_0.
\end{eqnarray*}
Let $Y_0=\bmatrix Y_{11}&Y_{12}\\ Y_{21}&Y_{22}\endbmatrix$, $X_0=\bmatrix X_{11}&X_{12}\\ X_{21}&X_{22}\endbmatrix$,
and $Y_0^{-1}=\bmatrix Z_{11}&Z_{12}\\ Z_{21}&Z_{22}\endbmatrix$, $X_{ij},\ Y_{ij},\ Z_{ij}\in M_m(F)$. Note that
$X'_0=-\bmatrix \tilde\varepsilon(X_{22})&\tilde\varepsilon(X_{12})\\
\tilde\varepsilon(X_{21})&\tilde\varepsilon(X_{11})\endbmatrix$ and $\tilde\varepsilon(Y_0)=\bmatrix
\tilde\varepsilon(Y_{22})&\tilde\varepsilon(Y_{12})\\ \tilde\varepsilon(Y_{22})&\tilde\varepsilon(Y_{11}) \endbmatrix$.
So,
\begin{eqnarray*}
Y_0+\tilde\varepsilon(Y_0)&=&\bmatrix Y_{11}+\tilde\varepsilon(Y_{22})&
Y_{12}+\tilde\varepsilon(Y_{12})\\ Y_{21}+\tilde\varepsilon(Y_{21})&
Y_{22}+\tilde\varepsilon(Y_{11})\endbmatrix\\
=X_0 X'_0&=& - \bmatrix X_{11}\tilde\varepsilon(X_{22})+X_{12}\tilde\varepsilon(X_{21})&X_{11}\tilde\varepsilon(X_{12})+X_{12}\tilde\varepsilon(X_{11})\\
X_{21}\tilde\varepsilon(X_{22})+X_{22}\tilde\varepsilon(X_{21})&X_{21}\tilde\varepsilon(X_{12})+X_{22}\tilde\varepsilon(X_{11})\endbmatrix.
\end{eqnarray*}
Now, let $Y=\bmatrix Y_{11}&0&Y_{12}\\ 0&-1/2&0\\ Y_{21}&0&Y_{22}\endbmatrix$ and $X=\bmatrix X_{11}&0&X_{12}\\
0&1&0\\ X_{21}&0&X_{22}\endbmatrix$. Then
\begin{eqnarray*}
\tilde\varepsilon(Y)&=&
\bmatrix\varepsilon (Y_{22})&0&\tilde\varepsilon(Y_{12})\\ 0&-1/2&0\\ \tilde\varepsilon(Y_{21})&0&\tilde\varepsilon(Y_{11})\endbmatrix,
\text{ and }\\
X'&=& - \bmatrix \tilde\varepsilon(X_{22})&0&\tilde\varepsilon(X_{12})\\ 0&1&0\\ \tilde\varepsilon(X_{21})&0&\tilde\varepsilon(X_{11}) \endbmatrix.
\end{eqnarray*}
Thus, direct computation shows $(X,Y)$ satisfies (1.1).
Furthermore,
\begin{eqnarray*}
I&-&X' Y^{-1} X=\\
I&+&\bmatrix \tilde\varepsilon(X_{22})&0&\tilde\varepsilon(X_{12})\\ 0&1&0\\ \tilde\varepsilon(X_{21})&0&\tilde\varepsilon(X_{11})\endbmatrix \bmatrix Z_{11}&0&Z_{12}\\ 0&-2&0\\ Z_{21}&0&Z_{22}\endbmatrix \bmatrix X_{11}&0&X_{12}\\ 0&1&0\\ X_{21}&0&X_{22}\endbmatrix\\
=&-&\bmatrix h_{11}&0&h_{12}\\ 0&1&0\\ h_{21}&0&h_{22}\endbmatrix, \text{ so }-(I-X' Y^{-1} X)=h,
\end{eqnarray*}
as claimed.
\end{proof}

\begin{corollary}The elements $X\in M_{2m+1}(F)$ and $Y\in GL_{2m+1}(F)$ can be chosen so that $X$ is a projection, and $X' Y^{-1}X=X' Y^{-1}= -Y^{-1} X$.
\end{corollary}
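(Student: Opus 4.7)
The plan is to use the very construction of $X$ and $Y$ given in the proof of Proposition 2.7 and to verify the additional properties by block matrix computation, invoking the corresponding $SO_{2m}$ statement (the even-dimensional analog of Corollary 2.8, established in \cite{Sh3}). That result provides $X_0 \in M_{2m}(F)$ and $Y_0 \in GL_{2m}(F)$ with $X_0$ a projection and satisfying $X_0' Y_0^{-1} X_0 = X_0' Y_0^{-1} = -Y_0^{-1} X_0$. One then needs to check that these identities propagate through the embedding used to build $X$ and $Y$ in Proposition 2.7, and that the special values chosen in the middle row and column of $X$ and $Y$ (namely $1$ and $-1/2$, giving $-2$ in the middle of $Y^{-1}$) are exactly what is needed to preserve the identities.

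To verify that $X$ is a projection, I would compute $X^2$ blockwise. With $X = \bmatrix X_{11} & 0 & X_{12} \\ 0 & 1 & 0 \\ X_{21} & 0 & X_{22}\endbmatrix$, the zeros in the middle row and column decouple the middle block from the outer four blocks, so the middle block of $X^2$ is $1^2 = 1$ and the four outer blocks of $X^2$ are precisely the four blocks of $X_0^2$. Since $X_0$ is a projection in $M_{2m}(F)$, we obtain $X^2 = X$ at once.

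For the identity $X' Y^{-1} = -Y^{-1} X$, I would again compute block by block. Off-diagonal entries in the middle row or column of both $X'Y^{-1}$ and $Y^{-1}X$ vanish because the middle row of $X'$ and the middle column of $X$ each have only a single nonzero entry. The four outer blocks of $X' Y^{-1}$ coincide with the blocks of $X_0' Y_0^{-1}$, and similarly for $-Y^{-1} X$ versus $-Y_0^{-1} X_0$; hence equality on the outer blocks follows directly from the even-dimensional identity $X_0' Y_0^{-1} = -Y_0^{-1} X_0$. The middle $(2,2)$ entry of $X'Y^{-1}$ is $(-1)(-2) = 2$, while that of $-Y^{-1}X$ is $-(-2)(1) = 2$, so the middle entries agree as well. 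The remaining identity $X' Y^{-1} X = X' Y^{-1}$ is then automatic: using $X' Y^{-1} = -Y^{-1} X$ and $X^2 = X$, we get $X' Y^{-1} X = -Y^{-1} X \cdot X = -Y^{-1} X = X' Y^{-1}$.

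The only mildly delicate step is matching the $SO_{2m}$-conventions for $X_0'$ and $\tilde\varepsilon$ (as inherited from \cite{Sh3}) with the $SO_{2m+1}$-conventions here; concretely, one must check that the outer four blocks of the transpose-like operation $X \mapsto X'$ used in the current paper, when restricted to matrices of the block form above, really do reproduce the corresponding operation on $X_0$. Given the explicit formulas for $X'$ and $\tilde\varepsilon(Y)$ already written out in the proof of Proposition 2.7, this is a routine bookkeeping check. Once that is in place, the corollary follows from the block calculations sketched above.
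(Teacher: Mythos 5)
Your proposal is correct and takes essentially the same route as the paper: the paper's proof likewise invokes \cite{Sh3} to choose $X_0,Y_0$ with $X_0$ a projection satisfying $X_0'Y_0^{-1}X_0=X_0'Y_0^{-1}=-Y_0^{-1}X_0$, and then asserts that the $X,Y$ constructed in the proof of Proposition 2.7 inherit these properties. You have merely written out the routine block verification (including the check that the middle entries $(-1)(-2)=2=-(-2)(1)$ agree) that the paper leaves implicit.
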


\begin{proof}We know from 
{\cite {Sh3}} that the elements $X_0,Y_0$ in the proof may be chosen so that $X_0$ is a projection, and
$X'_0 Y_0^{-1} X_0=X'_0 Y_0^{-1}=-Y_0^{-1} X_0$. Then the specified elements $X,Y$ of the proof of Proposition 2.7 satisfies
the claim.
\end{proof}

\begin{lemma}Let $n=2m+1$.
Suppose $\{Y\}\in\cal N$ and $X$ is a projection satisfying (1.1) with $Y$.
Then $-(I-X' Y^{-1} X)$ determines the semisimple part of the conjugacy class of $\varepsilon(Y^{-1})Y^{-1}$, uniquely.
\end{lemma}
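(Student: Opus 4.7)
The plan is to reduce the statement to the standard fact that the $GL_{2m+1}(F)$-conjugacy class of a semisimple element is determined by its characteristic polynomial, by proving that $h:=-(I-X'Y^{-1}X)$ and $A:=\varepsilon(Y^{-1})Y^{-1}$ share the same characteristic polynomial as elements of $GL_{2m+1}(F)$.

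First I would put $h$ and $A$ into a parallel algebraic form using (1.1). The definitions of $\varepsilon$ and $\tilde\varepsilon$, together with $w_n^{-1}=w_n$, yield $\tilde\varepsilon(Y)=\varepsilon(Y^{-1})$, so (1.1) rewrites as $\varepsilon(Y^{-1})=XX'-Y$. Multiplying on the right by $Y^{-1}$ then gives
\begin{equation*}
A+I=XX'Y^{-1},\qquad h+I=X'Y^{-1}X.
\end{equation*}

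Second I would apply the determinantal identity $\det(tI-MN)=\det(tI-NM)$, valid for any two square matrices of the same size, with $M=X$ and $N=X'Y^{-1}$. Both are $(2m+1)\times(2m+1)$ matrices precisely because $n=2m+1$; this is the only place where the parity hypothesis enters essentially, since for $n<2m+1$ the corresponding rectangular identity would introduce an extra factor $(t+1)^{2m+1-n}$, reflecting the ``parasitic'' $-1$ eigenvalues of $h$ already observed in Proposition~2.1. After the shift $t\mapsto t+1$, the identity gives $\det(tI-A)=\det(tI-h)$. Finally, since each elementary divisor of a semisimple matrix is an irreducible factor of its characteristic polynomial, and the multiplicity of an irreducible factor in the characteristic polynomial determines how often it occurs as an elementary divisor, the rational canonical form of the semisimple part of $A$ is read off the characteristic polynomial of $h$. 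Hence the semisimple part of the $GL_{2m+1}(F)$-conjugacy class of $A$ is uniquely determined by $h$, as required.

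The projection hypothesis on $X$ is not explicitly used in the computation; rather, it places us in the canonical setting of Lemma~2.3 and Corollary~2.8, where such an $X$ is known to exist. The main point requiring care is the last step --- the fact that the characteristic polynomial determines rational (and not merely geometric) conjugacy for a semisimple element of $GL_n(F)$ --- but this is standard over any field, reducing to the observation that the elementary divisors of a semisimple matrix are all squarefree and hence determined by the multiset of irreducible factors of its characteristic polynomial.
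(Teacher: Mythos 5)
Your proof is correct, and it takes a different route from the paper's. The paper exploits the hypothesis that $X$ is a projection: since then $F^{2m+1}=\operatorname{Im}X\oplus\operatorname{Ker}X$, it uses Lemma 1.5(b) to show that $\varepsilon(Y^{-1})Y^{-1}$ is block upper triangular with respect to this decomposition, with diagonal blocks $-(I-X'Y^{-1}X)|_{\operatorname{Im}X}$ and $-I$, and reads off the semisimple part from that block form. You instead bypass the projection structure entirely: rewriting (1.1) as $\varepsilon(Y^{-1})Y^{-1}+I=XX'Y^{-1}$ and noting $h+I=X'Y^{-1}X$, the identity $\det(tI-MN)=\det(tI-NM)$ for square $M=X$, $N=X'Y^{-1}$ gives equality of characteristic polynomials directly, and the conclusion follows since a semisimple class in $GL_{2m+1}(F)$ is pinned down by its characteristic polynomial (your elementary-divisor justification of this rational, not merely geometric, statement is the right one). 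What your approach buys is that the projection hypothesis becomes superfluous — the conclusion holds for any square $X$ solving (1.1) with $Y$ invertible — and the argument is self-contained, not leaning on Lemma 1.5(b); your aside about the factor $(t+1)^{2m+1-n}$ in the rectangular case also cleanly explains the forced $\pm1$ eigenvalues noted in Proposition 2.1. What the paper's approach buys is the explicit block-triangular matrix for $\varepsilon(Y^{-1})Y^{-1}$, which is reused verbatim in the proofs of Lemma 2.10 and Proposition 4.1, so if you adopted your version you would want to record the equality of characteristic polynomials as the portable statement for those later arguments.
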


\begin{proof}If $v$ is in the left image of $X$, then by Lemma 1.5(b)
$$
v\varepsilon (Y^{-1}) Y^{-1} X=v (-(I-X' Y^{-1} X)),
$$
While if $vX=0$ we have $v(Y+\tilde\varepsilon(Y))=vXX'=0$, and hence $v\varepsilon(Y^{-1})Y^{-1}=-v$.
Thus, the matrix of $\tilde\varepsilon(Y^{-1})Y^{-1}$ with respect to a basis respecting the decomposition $F^{2m+1}=Im X\oplus\text{ Ker }X$, is
$\bmatrix-(I-X' Y^{-1} X)|_{Im X}&*\\ 0&-I\endbmatrix$, proving the lemma.
\end{proof}

\begin{lemma}Suppose $F$ is algebraically closed, and $n=2m+1$.
Let $\{Y\}\in\cal N$ be $\varepsilon$--semisimple with $Y$ in an $\varepsilon$--stable Cartan subgroup of $GL_n(F)$.
Then, there is an $X\in M_n(F)$ satisfying (1.1) with $Y$ so that $-(I-X' Y^{-1} X)$ is semisimple in $G(m)$.
Moreover, $\varepsilon(Y^{-1}) Y^{-1}$ is in $G(m)$ and every $GL_n(F)$--conjugate of $\varepsilon(Y^{-1})Y^{-1}$ belongs to the image of $\{Y^{-1}\}$ under $N_\varepsilon$.
\end{lemma}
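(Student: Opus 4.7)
The plan is to reduce to the case $Y$ lies in the standard split diagonal torus $\uT$ of $GL_n$ via an $\varepsilon$-conjugation, exhibit a diagonal $X$ satisfying (1.1) by hand, recognize $-(I-X'Y^{-1}X)$ as $\varepsilon(Y^{-1})Y^{-1}\in G(m)$, and then exhaust all $SO_{2m+1}(F)$-conjugates of this element by right-multiplying $X$ by $SO_{2m+1}(F)$. Over the algebraically closed $F$ this will cover every $GL_n(F)$-conjugate lying in $SO_{2m+1}$.

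For the reduction, the algebraic closedness of $F$ together with a Kostant--Rallis type statement for the symmetric pair $(GL_n,O_n)$ lets me $\varepsilon$-conjugate $Y$ into $\uT$, so I assume $Y=\diag(y_1,\ldots,y_n)$ with $y_i\in F^\times$. Then $\tilde\varepsilon(Y)=\diag(y_n,\ldots,y_1)$ and $\varepsilon(Y^{-1})=\diag(y_n,\ldots,y_1)$ commute with $Y^{-1}$, so $\varepsilon(Y^{-1})Y^{-1}=\diag(y_n/y_1,\ldots,y_1/y_n)$ is $\varepsilon$-fixed, lies in the split maximal torus of $G(m)$ (middle entry $1$, outer pairs reciprocal), and is semisimple. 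I then produce $X$ by choosing $x_{m+1}\in F$ with $x_{m+1}^2=-2y_{m+1}$ (using algebraic closedness) and, for $i<m+1$, picking $x_i\in F^\times$ with $x_{n+1-i}=-(y_i+y_{n+1-i})/x_i$, trivially modified when $y_i+y_{n+1-i}=0$. Setting $X=\diag(x_1,\ldots,x_n)$ and noting $X'=-\diag(x_n,\ldots,x_1)$, a direct entrywise check gives both $XX'=Y+\tilde\varepsilon(Y)$ and $-(I-X'Y^{-1}X)=\varepsilon(Y^{-1})Y^{-1}$.

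For the final assertion, for any $s\in SO_{2m+1}(F)$ the relation $s^{\mathsf T}w_{2m+1}s=w_{2m+1}$ yields $(Xs^{-1})'=sX'$, so $(Xs^{-1},Y)$ solves (1.1) and
\[
-\bigl(I-(Xs^{-1})'Y^{-1}(Xs^{-1})\bigr)=s\,\varepsilon(Y^{-1})Y^{-1}\,s^{-1};
\]
every $SO_{2m+1}(F)$-conjugate of $\varepsilon(Y^{-1})Y^{-1}$ therefore belongs to $N_\varepsilon(\{Y^{-1}\})$. Over the algebraically closed $F$, any semisimple element of $SO_{2m+1}$ that is $GL_n(F)$-conjugate to $\varepsilon(Y^{-1})Y^{-1}$ has the same eigenvalue multiset --- reciprocal pairs together with the forced $1$ --- and this multiset already determines the $W(B_m)$-orbit in the split torus, so the element is in fact $SO_{2m+1}(F)$-conjugate to $\varepsilon(Y^{-1})Y^{-1}$. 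The main obstacle I anticipate is the reduction step: justifying the $\varepsilon$-conjugation of $Y$ into $\uT$ with care (likely by an explicit diagonalization in the symmetric variety $GL_n/O_n$) and confirming that this conjugation disturbs neither $\{Y^{-1}\}$ nor the image under $N_\varepsilon$. The remaining steps are routine matrix algebra with sign bookkeeping dictated by $w_{2m+1}$ and the definition of $X'$.
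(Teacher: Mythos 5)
Your proof is correct. The first half coincides with the paper's own argument: the paper likewise reduces to a diagonal $Y_1=hYh^{-1}$ (taking $h\in G(m)$, for which ordinary and $\varepsilon$--conjugation agree since $\varepsilon(h)=h$) and then writes down a diagonal $X_1$ with $x_ix_{n+1-i}=-(y_i+y_{n+1-i})$ and $x_{m+1}^2=-2y_{m+1}$, exactly as you do; so your ``main obstacle'' is handled in the paper simply by the conjugacy, over $\overline F$, of $\varepsilon$--stable Cartans under the fixed group $O_n(\overline F)\supset G(m)$ --- no Kostant--Rallis machinery is needed, and one should take the conjugating element in $O_n$ so that both $\{Y^{-1}\}$ and the containment $\varepsilon(Y^{-1})Y^{-1}\in G(m)$ are visibly preserved. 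Where you genuinely diverge is the ``moreover'' clause. The paper passes to an $\varepsilon$--conjugate $Y_2$ admitting a projection $X_2$ (Corollary 2.8), invokes the block-triangular shape of $\varepsilon(Y_2^{-1})Y_2^{-1}$ from Lemma 2.9, and compares eigenvalue multisets to conclude that $-(I-X'Y^{-1}X)$ is $GL_n(F)$--conjugate to $\varepsilon(Y^{-1})Y^{-1}$, leaving the sweep over the whole class implicit. You instead exploit the explicit solution in which $-(I-X'Y^{-1}X)$ \emph{equals} $\varepsilon(Y^{-1})Y^{-1}$, observe that $(Xs^{-1},Y)$ is again a solution with norm $s\varepsilon(Y^{-1})Y^{-1}s^{-1}$ for $s\in SO_{2m+1}(F)$, and finish with the fact that over $\overline F$ semisimple elements of $SO_{2m+1}$ with the same eigenvalue multiset are $SO_{2m+1}$--conjugate (here one should note $O_{2m+1}=\{\pm I\}\times SO_{2m+1}$ with $-I$ central, so $O$-- and $SO$--conjugacy coincide). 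Your route is more self-contained and actually makes the surjectivity onto the full $GL_n(\overline F)$--class explicit, at the cost of relying on the special diagonal representative rather than on the projection/canonical-section formalism that the paper reuses later (e.g.\ in Lemma 2.11 and Proposition 4.1) for fields that are not algebraically closed.
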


\begin{proof}Choose $h\in G(m)$ so that $Y_1=h Yh^{-1}=\diag\{a_1,a_2,\ldots,a_{2m+1}\}$.
Then 
$$Y_1+\tilde\varepsilon(Y_1)=\diag\{a_1+a_{2m+1},a_2+a_{2m},\ldots,a_m+a_{m+2}, 2a_{m+1},a_{m}+ a_{m+2},\ldots,a_1+a_{2m+1}\}.$$
Set $X_1=i \diag \{a_1+a_{2m+1},a_2+a_{2m},\ldots,a_m+a_{m+2},
\sqrt{2a_{m+1}},1,1,\ldots,1\}$, with $i=\sqrt{-1}$.
Then
$$
X'_1=-i \text{ diag }\{1,1,\ldots,1,\ \sqrt{2a_{m+1}},\ a_m+a_{m+2},\ldots,a_1+a_{2m+1}\}.
$$
So
$$
X_1 X'_1=Y_1+\tilde\varepsilon(Y_1),$$
and
$$
I-X'_1 Y_1^{-1} X_1=-\text{diag}\{a_1^{-1} a_{2m+1}, a_2^{-1} a_{2m},\ldots,a_m^{-1} a_{m+2},-1,a_m a_{m+2}^{-1},\ldots,a_1 a_{2m+1}^{-1}\}.
$$
So $-(I-X' Y^{-1} X)=\text{ diag}\{b_1,b_2,\ldots,b_m,1,b_m^{-1},\ldots,b_1^{-1}\}$, with $b_i=a_i^{-1} a_{2m+2-i}$, $i=1,2,\ldots,m$, is semisimple in $G(m)$.

Let $X=hX_1$.
Then we have 
\begin{eqnarray*}
I-X' Y^{-1} X&=&I-(X'_1\varepsilon(h)^{-1}) h Y_1^{-1} h^{-1} (h X_1)\\
&=&I-X'_1 Y_1^{-1} X_1\text{ is semisimple},
\end{eqnarray*} 
proving the first statement.

We note that $Y_1^{-1}\varepsilon(Y_1^{-1})=\text{ diag}\{b_1,b_2,\ldots,b_m,1,b_m^{-1},\ldots,b_1^{-1}\}$, is semisimple in $G(m)$.
Now choose $Y_2$, an $\varepsilon$--conjugate of $Y$ for which there is a projection $X_2$ satisfying (1.1) with $Y_2$ so that $I-X' Y^{-1} X=I-X'_2 Y_2^{-1} X_2$ (Corollary 2.8).

Then $\varepsilon(Y_2^{-1})Y_2^{-1}$ is conjugate to $\varepsilon(Y^{-1})Y^{-1}$ and, by the proof of Lemma 2.9, $\varepsilon(Y_2^{-1})Y_2^{-1}$ has matrix
$\begin{pmatrix} -(I-X'_2 Y_2^{-1} X_2)|_{Im X_2}&*\\
0&-I\end{pmatrix}$ with respect to some basis.
As $Y$ belongs to an $\varepsilon$--stable Cartan, $Y^{-1}\varepsilon(Y^{-1})=\varepsilon(Y^{-1})Y^{-1}$ and thus $Y_2^{-1}\varepsilon(Y_2^{-1})$ and $\varepsilon(Y_2^{-1})Y_2^{-1}$ have the same eigenvalues.
Thus, the eigenvalues of $-(I-X'Y^{-1}X)=-(I-X'_2 Y_2^{-1}X_2)$ which are not equal to $-1$ are among those of $\varepsilon(Y_2^{-1})Y_2^{-1}$.
Therefore, $\varepsilon(Y^{-1})Y^{-1}$ and $-(I-X'Y^{-1}X)$ are $GL_n(F)$--conjugate.
\end{proof}

\begin{lemma}Let $n=2m+1$ and suppose $F$ is not necessarily algebraically closed.
\begin{description}
\item[(a)]Suppose $\{Y\}\in\cal N$ is $\varepsilon$--semisimple.
If $Y$ satisfies (1.1) with some $X$ for which $-(I-X' Y^{-1} X)\in G(m)$ is semisimple, then all other conclusions of Lemma 2.10 hold.

\item[(b)]The semisimple part of every conjugacy class in ${\cal N}_\varepsilon(\{Y^{-1}\})$ is $GL_n(F)$--conjugate to $\{\varepsilon(Y^{-1})Y^{-1}\}$.
\end{description}
\end{lemma}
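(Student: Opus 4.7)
Both parts follow the template of Lemma 2.10, adapted so as to drop the hypothesis that $F$ be algebraically closed. For part (a), begin with the rational witness $(X,Y)$ given in the hypothesis and set $h := -(I-X'Y^{-1}X) \in G(m)$, which by assumption is semisimple. Apply Corollary 2.8 to $h$ to produce a projection $X_2 \in M_{2m+1}(F)$ and an element $Y_2$ in the $\varepsilon$-conjugacy class of $Y$ such that $(X_2,Y_2)$ satisfies (1.1), with $X_2' Y_2^{-1} X_2 = X_2' Y_2^{-1} = -Y_2^{-1}X_2$ and $-(I-X_2'Y_2^{-1}X_2) = h$. Lemma 2.9, applied to the projection $X_2$, then writes the matrix of $\varepsilon(Y_2^{-1})Y_2^{-1}$ relative to $F^{2m+1} = \operatorname{Im} X_2 \oplus \ker X_2$ as $\begin{pmatrix} h|_{\operatorname{Im} X_2} & \ast \\ 0 & -I\end{pmatrix}$.

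The $\varepsilon$-semisimplicity of $\{Y\}$ forces $\varepsilon(Y_2^{-1})Y_2^{-1}$ to be semisimple, hence the off-diagonal $\ast$ vanishes. Since $X_2 v = 0$ implies $hv = -v$, the element $h$ itself is block diagonal with blocks $h|_{\operatorname{Im} X_2}$ and $-I$, so $h$ and $\varepsilon(Y_2^{-1})Y_2^{-1}$ share the same characteristic polynomial. Both being semisimple elements of $GL_n(F)$, they are therefore $GL_n(F)$-conjugate. Conjugating by the element realising $Y_2 = gY\varepsilon(g)^{-1}$ and using $h \in G(m)$ gives $\varepsilon(Y^{-1})Y^{-1} \in G(m)$. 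The second conclusion of (a)---that every $GL_n(F)$-conjugate of $\varepsilon(Y^{-1})Y^{-1}$ belongs to $N_\varepsilon(\{Y^{-1}\})$---then follows by varying $(X_2, Y_2) \mapsto (gX_2 h',\, gY_2 \varepsilon(g)^{-1})$ with $g \in GL_n(F)$ and $h' \in SO_{2m+1}(F)$: by the transformation rules established just before Lemma 1.4, this stays inside the $\varepsilon$-class of $Y$ and sweeps out every $G(m)(F)$-conjugacy class inside the $GL_n(F)$-orbit of $h$.

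For part (b), let $\{h_1\} \in N_\varepsilon(\{Y^{-1}\})$ be realised as $h_1 = -(I-X_1' Y_1^{-1} X_1)$ with $Y_1$ in the $\varepsilon$-class of $Y$. Apply Corollary 2.8 to reduce to a projection datum, preserving the $G(m)(F)$-conjugacy class of $h_1$, so that Lemma 2.9 identifies the semisimple part of $\varepsilon(Y_1^{-1})Y_1^{-1}$ with that of $h_1$ (the $-I$ block on $\ker X_1$ being absorbed into the $h_1$-eigenvalues as in the argument above). The computation $\varepsilon(Y_1^{-1})Y_1^{-1} = g\,\varepsilon(Y^{-1})Y^{-1}\,g^{-1}$ whenever $Y_1 = gY\varepsilon(g)^{-1}$ shows that the $GL_n(F)$-conjugacy class of $\varepsilon(Y^{-1})Y^{-1}$ is an invariant of the $\varepsilon$-class; this closes the loop, yielding that the semisimple part of $h_1$ is $GL_n(F)$-conjugate to $\varepsilon(Y^{-1})Y^{-1}$.

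\textbf{Main obstacle.} The delicate step is the opening move in part (a): verifying that the projection datum $(X_2, Y_2)$ furnished by Corollary 2.8 genuinely lies inside the prescribed $\varepsilon$-class of $Y$, and not merely inside some other class with the same stable semisimple invariant. Here the rational hypothesis is essential: the given $(X,Y)$ pins down the $\varepsilon$-class through the semisimple-part identification of Lemma 2.9, so the Proposition 2.7 construction underlying Corollary 2.8 necessarily produces $Y_2$ in the correct class. Without the rational witness in the hypothesis one would only have this information over $\bar F$, and the $F$-rational descent of the conclusions of Lemma 2.10 would fail.
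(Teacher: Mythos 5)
The paper offers no proof of this lemma at all: it is one of the statements left to the reader, the intended argument being a rerun of the second half of the proof of Lemma 2.10 (which no longer needs $F=\overline F$ once a rational $X$ with semisimple norm is supplied). That is exactly the template you follow --- Corollary 2.8 to pass to a projection datum, Lemma 2.9 for the block-triangular shape of $\varepsilon(Y_2^{-1})Y_2^{-1}$, then an eigenvalue comparison --- so the architecture is right, and you correctly isolate the delicate point that Corollary 2.8 must return a $Y_2$ in the prescribed $\varepsilon$-class. (Your justification of that point is circular, since Lemma 2.9 only says the norm determines the semisimple part of $\varepsilon(Y^{-1})Y^{-1}$ and the correspondence is finite-to-one, not injective; but the paper's own proof of Lemma 2.10 asserts the same thing without argument, so I will not press it.)

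Two of your steps, however, fail as written. First, semisimplicity of $\varepsilon(Y_2^{-1})Y_2^{-1}$ does not make the off-diagonal block $\ast$ vanish in the given basis (a semisimple matrix can be strictly block-upper-triangular without being block-diagonal, e.g.\ $\left(\begin{smallmatrix}1&1\\0&2\end{smallmatrix}\right)$), and the claim that $X_2v=0$ implies $hv=-v$ misreads Lemma 2.9: the computation there gives $v\varepsilon(Y^{-1})Y^{-1}=-v$ on $\ker X_2$, not $vh=-v$; what Lemma 1.5(b) actually yields is $X_2h=\varepsilon(Y_2^{-1})Y_2^{-1}X_2$. The eigenvalue comparison should be run as in Lemma 2.10 --- the eigenvalues of a block-triangular matrix are read off the diagonal blocks irrespective of $\ast$, and the $(-1)$-eigenvalues on $\ker X_2$ must be matched against those of $h$ by a separate count --- not via a spurious block-diagonalization. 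Second, and more seriously, your argument for the final conclusion of (a) is wrong: the action $(X_2,Y_2)\mapsto(gX_2h',\,gY_2\varepsilon(g)^{-1})$ conjugates the norm only by $h'\in SO_{2m+1}(F)$ (this is the computation displayed just before Lemma 1.4), so it never leaves the single $G(m)$-conjugacy class of $h$ and cannot ``sweep out every $G(m)$-conjugacy class inside the $GL_n(F)$-orbit of $h$.'' Since $N_\varepsilon(\{Y^{-1}\})$ is by Definition 2.1 a set of $G(m)$-classes, exhibiting all rational classes inside the $GL_n(F)$-orbit of $\varepsilon(Y^{-1})Y^{-1}$ requires producing genuinely new solutions $(X_1,Y_1)$, not translates of one solution under the $M$-action; this rationality statement is the real content of part (a) over a non-closed field, and your proposal does not establish it.
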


\begin{lemma}Let ${\cal C}_{ss}$ be the collection of semisimple conjugacy classes in $G(m)$, and $\cal C'$ the $\varepsilon$--conjugacy classes in $GL_{2m+1}(F)$.
Then $N_\varepsilon: {\cal C}_{ss}\longrightarrow\cal C'$ is continuous.\end{lemma}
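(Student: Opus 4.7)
The plan is to combine the explicit construction from Proposition 2.7 with the analogous continuity statement for the $SO_{2m}$ case established as Lemma 5.9 of \cite{Sh3}. The map $N_\varepsilon$ appearing in the lemma sends a semisimple class $\{h\}\in{\cal C}_{ss}$ to the $\varepsilon$--conjugacy class $\{Y^{-1}\}$ of its canonical section in the sense of Definition 2.4; this assignment is well defined in view of Proposition 2.7 and Lemma 2.11(b).

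First, given a semisimple $h\in G(m)$, I would invoke Corollary 2.6 to produce, on a neighborhood of any fixed class, a representative in the block shape $\begin{pmatrix} h_{11}&0&h_{12}\\ 0&1&0\\ h_{21}&0&h_{22}\end{pmatrix}$, and then form $h_0=-\begin{pmatrix} h_{11}&h_{12}\\ h_{21}&h_{22}\end{pmatrix}\in SO_{2m}(F)$. This defines a continuous local map from ${\cal C}_{ss}$ into the space of semisimple classes in $SO_{2m}(F)$. Next, by the continuity of the norm correspondence in the $SO_{2m}$ case (Lemma 5.9 of \cite{Sh3}), the canonical section $\{Y_0^{-1}\}$ depends continuously on $\{h_0\}$. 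Finally, the explicit formulas in the proof of Proposition 2.7 assemble $Y$ from $Y_0$ by inserting a middle row and column of zeros with central entry $-1/2$, and this assembly is manifestly continuous in the entries of $Y_0$. Composing these three continuous steps yields the continuity of $N_\varepsilon$ at each $\{h\}$, and hence globally.

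The main obstacle will be the implicit choice of fixed vector in applying Corollary 2.6, since the dimension of the fixed space of $h$ may jump as $h$ varies through $G(m)$. However, continuity of $N_\varepsilon$ is a local property on ${\cal C}_{ss}$, and on a sufficiently small neighborhood of any fixed class, Lemma 2.5 guarantees that the fixed space is odd-dimensional and nonzero. A continuous local choice of nonzero fixed vector can then be made by tracking the eigenspace of $h$ at the eigenvalue $1$ (which varies continuously with $h$ as long as we stay in a sufficiently small neighborhood), so the block representative of Corollary 2.6 can also be chosen continuously on that neighborhood. This local continuity suffices to conclude the global statement of the lemma.
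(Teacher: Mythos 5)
Your reading of the statement as continuity of the section $\{h\}\mapsto\{Y^{-1}\}$ (rather than of $N_\varepsilon$ in the direction of Definition 2.1) is the right one, and your overall route differs from the paper's in an interesting way. The paper offers no argument at all here: it simply asserts that the lemma ``follows in the same manner as'' Proposition 5.9 of \cite{Sh3} and its analogues in \cite{GS1} and \cite{GS2}, i.e.\ it proposes to re-run the even-orthogonal argument for $SO_{2m+1}$ from scratch. You instead \emph{deduce} the odd case from the even one by composing three maps: $\{h\}\mapsto\{h_0\}$, the continuous section for $SO_{2m}$ from \cite{Sh3}, and the assembly $Y_0\mapsto Y$ from the proof of Proposition 2.7. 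This mirrors exactly how the paper itself treats the $n=2m$ case in Lemma 3.4(b) (``compose with $\varphi$''), and it buys you a shorter argument at the price of having to control the correspondence $\{h\}\leftrightarrow\{h_0\}$. The second and third steps are unproblematic: the middle one is the quoted result of \cite{Sh3}, and $Y_0\mapsto Y$ is affine in the matrix entries and intertwines the twisted conjugations under the block embedding (cf.\ the proof of Lemma 3.2), so it descends continuously to classes.

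The one step that does not hold up as written is your justification of the first map. You propose to choose the block representative of Corollary 2.6 continuously ``by tracking the eigenspace of $h$ at the eigenvalue $1$.'' But that eigenspace is only semicontinuous: every neighborhood of a class whose fixed space $V^h$ has dimension $>1$ contains semisimple classes with strictly smaller fixed space, and along a sequence $h_j\to h$ with $\dim V^{h_j}<\dim V^h$ the subspaces $V^{h_j}$ need not converge to any preassigned line in $V^h$, so no continuous local choice of fixed vector exists at such points. The repair is to observe that you do not need a continuous choice, only that limits of admissible choices land in the correct class: given $h_j\to h$ and fixed vectors $v_j$ with $Q(v_j)=1$ chosen in a bounded set (this boundedness is itself a point to verify, since the limiting line could a priori be isotropic), any limit $v$ lies in $V^h$ with $Q(v)=1$, and any two such vectors are interchanged by an element of the centralizer of $h$ in $SO_{2m+1}(F)$ --- by Witt's theorem inside $SO(V^h)$ when $\dim V^h\ge 3$, while for $\dim V^h=1$ the only ambiguity is $v\mapsto -v$, which leaves the block form unchanged. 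Hence the resulting classes $\{h_0\}$, and therefore $\{Y^{-1}\}$, are independent of the choices, which is all that continuity into the quotient ${\cal C}'$ requires. With that substitution your argument goes through.
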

(This follows in the same manner as Proposition 5.9 of \cite{Sh3},
Lemma 3.16 of \cite{GS1}, and Lemma 2.21 of \cite{GS2}.)

For any $n$, we let $\theta^*$ be the automorphism of $GL_n$ defined by $\theta^*(g)=u_n\ ^t\!g^{-1} u_n^{-1}$,
with
$$
u_n=\begin{pmatrix} &&&&&-1\\ &&&&1\\ &&&-1\\&&.\\&.\\.\end{pmatrix}.
$$
Note that $u_n=g_\theta w_n$, with $g_\theta=\text{ diag}\{-1,1,-1,\ldots\}$.
So $\theta^*=\text{Int}(g_\theta)\varepsilon$.

\begin{lemma}$\theta^*$ preserves a splitting of $GL_n$ over $F$.
\end{lemma}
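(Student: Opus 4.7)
The plan is to verify directly that the standard pinning of $GL_n$ is preserved by $\theta^*$. Take the splitting $(B_0, T_0, \{E_{\alpha_i}\})$ where $B_0$ is the upper triangular Borel, $T_0$ the diagonal torus, and $E_{\alpha_i} = E_{i,i+1}$ is the standard Chevalley root vector for the simple root $\alpha_i = e_i - e_{i+1}$. To preserve a splitting means that $\theta^*$ stabilizes $B_0$ and $T_0$ and sends each $E_{\alpha_i}$ to $E_{\alpha_{\theta^*\!\alpha_i}}$ on the nose (not merely up to a nonzero scalar).

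Stability of $T_0$ and $B_0$ is essentially formal. Since $u_n$ is monomial (a diagonal times a permutation matrix) and the transpose-inverse preserves diagonality, $\theta^*(T_0) = T_0$. For $B_0$, transpose-inverse sends upper triangular to lower triangular, conjugation by $w_n$ reverses the coordinates and restores upper triangularity, and conjugation by the diagonal matrix $g_\theta$ preserves $B_0$; hence $\theta^*(B_0) = B_0$. Tracking the action on characters, $w_n$ acts as the longest element of the Weyl group sending $e_i \mapsto e_{n+1-i}$, and the transpose-inverse negates, so $\theta^*$ permutes simple roots by the diagram involution $\alpha_i \leftrightarrow \alpha_{n-i}$; the diagonal $g_\theta$ acts trivially on $T_0$ and so does not affect this.

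The heart of the matter is the sign bookkeeping on the root vectors, and this is where the specific form of $g_\theta$ comes in. Differentiating, one has $d\theta^*(E_{i,i+1}) = -u_n\, E_{i+1,i}\, u_n^{-1}$. A direct computation gives $w_n E_{i+1,i} w_n^{-1} = E_{n-i, n-i+1}$, and then conjugation by $g_\theta = \mathrm{diag}((-1)^k)$ multiplies by the scalar $(-1)^{n-i}(-1)^{n-i+1} = -1$. Hence
\[
d\theta^*(E_{i,i+1}) = -\bigl(-E_{n-i,n-i+1}\bigr) = E_{n-i,n-i+1} = E_{\alpha_{n-i}},
\]
so the simple root vectors are permuted exactly, completing the verification.

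There is really no obstacle, only a calculation. The only thing that could go wrong is the sign, and the diagonal factor $g_\theta$ was evidently inserted precisely to absorb the $-1$ that arises from combining the transpose-inverse with conjugation by $w_n$. Since the proof is purely a matrix manipulation over $F$ (with no field extension required), preservation of the splitting is automatic over the base field.
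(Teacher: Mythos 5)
Your proof is correct and follows essentially the same route as the paper: both take the standard pinning $(\uB_0',\uT_0',\{E_{i,i+1}\})$ and verify by direct matrix computation that $\theta^*$ permutes the simple root vectors exactly, with $g_\theta$ absorbing the sign from combining transpose-inverse with conjugation by $w_n$. The paper phrases the key step at the group level as $\theta^*(I+X_i)=I+X_{n-i}$, which is equivalent to your infinitesimal calculation $d\theta^*(E_{i,i+1})=E_{n-i,n-i+1}$.
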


\begin{proof}Let $\bold B'_0$ be the Borel subgroup of upper triangular matrices in $GL_n$ and $\bold
T'_0$ the maximal split torus of diagonal elements. Let $X_i(t)=t E_{i,i+1}$, with $E_{i,i+1}$ the standard
basis element of $M_n(F)$. Then $(\uB'_0,\uT'_0,\{X_i\}^{n-1}_{i=1})$ is a splitting of $GL_n$ over $F$.
It is a straightforward matrix computation that $\theta^*_0(I+X_i)=I+X_{n-i}$, so $\theta^*$ fixes this splitting.
\end{proof}

For convenience, we  denote $GL_n$ by $\bold G',$ and for $Y\in \bold G',$ we set $\bold G'_{\varepsilon,Y}$ to be the
twisted centralizer of $Y$ in $\bold G'.$  That is
$$\bold G'_{\varepsilon, Y}=\{g\in\bold G'\,|g^{-1}Y\varepsilon(g)=Y\}.$$
If $Y\in G'=\bold G'(F),$ then we let $G'_{\varepsilon,Y}=\bold G'_{\varepsilon,Y}(F).$

We note the following result, which is implicit on pg. 273 of \cite{GS1}, whose proof is the same as the similar statement on pp 258-259 of \cite{GS2}.  

\begin{lemma}We have $G'_{\varepsilon,\varepsilon(Y)}=G'_{\varepsilon,Y^{-1}}=
\varepsilon(G'_{\varepsilon,Y})$.
\end{lemma}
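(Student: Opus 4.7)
The plan is to exploit two formal symmetries of the defining condition $g^{-1}Y\varepsilon(g)=Y$ for the twisted centralizer: (i) inverting the relation, and (ii) applying $\varepsilon$ to the relation. The key preliminary observation is that $\varepsilon$ is a group automorphism of $GL_n$ with $\varepsilon^2=\mathrm{id}$, which follows directly from $w_n={}^{t}w_n=w_n^{-1}$ (just as in the verification that $\varepsilon$ fixes the split form defining $\uG$). Because $\varepsilon$ is an anti-involution-turned-involution via the transpose-inverse, it commutes with group inversion, in the sense that $\varepsilon(g^{-1})=\varepsilon(g)^{-1}$.

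First I would establish $\varepsilon(G'_{\varepsilon,Y})=G'_{\varepsilon,Y^{-1}}$. Suppose $g\in G'_{\varepsilon,Y}$, so $g^{-1}Y\varepsilon(g)=Y$. Put $h=\varepsilon(g)$, so that $g=\varepsilon(h)$ by $\varepsilon^2=\mathrm{id}$, and rewrite the condition as $\varepsilon(h)^{-1}Yh=Y$. Inverting both sides yields
\[
h^{-1}Y^{-1}\varepsilon(h)=Y^{-1},
\]
which is exactly the statement $h\in G'_{\varepsilon,Y^{-1}}$. Reversing the argument (starting from $h\in G'_{\varepsilon,Y^{-1}}$ and setting $g=\varepsilon(h)$) shows the correspondence $g\leftrightarrow\varepsilon(g)$ is a bijection between the two sets, giving the desired equality.

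For the equality $G'_{\varepsilon,\varepsilon(Y)}=G'_{\varepsilon,Y^{-1}}$, I would work directly from the definition: $g\in G'_{\varepsilon,\varepsilon(Y)}$ iff $g^{-1}\varepsilon(Y)\varepsilon(g)=\varepsilon(Y)$. Applying the homomorphism $\varepsilon$ to both sides and using $\varepsilon^2=\mathrm{id}$ converts this to $\varepsilon(g)^{-1}Yg=Y$; then inverting that identity produces $g^{-1}Y^{-1}\varepsilon(g)=Y^{-1}$, i.e.\ $g\in G'_{\varepsilon,Y^{-1}}$. Each step (applying $\varepsilon$, taking the inverse) is reversible, so the two conditions on $g$ are equivalent.

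Once both equalities are in hand the claim follows. There really is no obstacle here: the statement is essentially a book-keeping lemma, and the only mild subtlety is keeping track of where inversion versus application of $\varepsilon$ is needed — inversion turns the $Y$ slot into the $Y^{-1}$ slot while swapping the roles of $g$ and $\varepsilon(g)$, whereas applying $\varepsilon$ preserves the $g$ versus $\varepsilon(g)$ positions but replaces $Y$ by $\varepsilon(Y)$. Combining these two operations in opposite orders gives the two asserted identifications.
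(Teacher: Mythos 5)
Your proof is correct: the two formal manipulations (applying the automorphism $\varepsilon$, which is an involution since $w_n={}^tw_n=w_n^{-1}$, and inverting the defining relation $g^{-1}Y\varepsilon(g)=Y$) do give both equalities, and each step is reversible. The paper omits the argument entirely, citing the analogous computations in \cite{GS1} and \cite{GS2}, and your direct verification is exactly the elementary book-keeping those references carry out.
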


\begin{lemma}Assume $n=2m+1$, that $(X,Y)$ satisfies (1.1), and $Z=-(I-X' Y^{-1} X)$.
Suppose $g\in G'_{\varepsilon,Y}(F)$ and there is an $h\in G(m)$ with $gX=Xh$.
Then $h$, whose class modulo the right stabilizer of $X$ is uniquely determined, belongs to $G_Z(F)$.
Conversely, suppose $h\in G_Z(F)$, and $(X,Y)$ gives the canonical section over $Z$.
If there is some $g\in G'(F)$ with $gX=Xh$, then there is such a $g\in G'_{\varepsilon,Y}(F)$.
\end{lemma}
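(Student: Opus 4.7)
The proof splits along the two directions of the biconditional, and I will handle them in turn.

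For the forward direction, starting from $gX = Xh$, I would first derive the transposed identity $X'\varepsilon(g)^{-1} = h^{-1}X'$. This combines two routine computations: $(gX)' = X'\varepsilon(g)^{-1}$ follows from the definition $X' = -w_{2m+1}{}^tX w_n$ together with ${}^tg = w_n^{-1}\varepsilon(g)^{-1}w_n$, while $(Xh)' = h^{-1}X'$ follows from $h \in SO_{2m+1}$, so that ${}^th = w_{2m+1}h^{-1}w_{2m+1}^{-1}$. Next, the hypothesis $g \in G'_{\varepsilon,Y}$ rearranges to $\varepsilon(g)^{-1}Y^{-1}g = Y^{-1}$. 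Chaining these identities gives
$$h^{-1}(X'Y^{-1}X)h = X'\varepsilon(g)^{-1}Y^{-1}gX = X'Y^{-1}X,$$
so $h^{-1}Zh = Z$, i.e., $h \in G_Z(F)$. The uniqueness of $h$ modulo the right stabilizer of $X$ is immediate: any second $h'$ with $Xh' = gX = Xh$ satisfies $X(h(h')^{-1}) = X$.

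For the converse direction, given $h \in G_Z(F)$ and $g \in G'(F)$ with $gX = Xh$, set $Y_1 := gY\varepsilon(g)^{-1}$. The strategy is to find $k \in G'(F)$ with $kX = X$ and $kY\varepsilon(k)^{-1} = Y_1$; then $g_0 := k^{-1}g$ lies in $G'_{\varepsilon,Y}(F)$ and satisfies $g_0 X = k^{-1}(Xh) = Xh$. To produce such a $k$ I would exploit the canonical section hypothesis: by Lemma 2.2 and Corollary 2.8 we may take $X$ to be a projection with $X'Y^{-1}X = X'Y^{-1} = -Y^{-1}X$, so $Y$ is rigidified on the image of $X$ and has freedom only along $\ker X$. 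Both pairs $(Xh, Y)$ and $(Xh, Y_1)$ satisfy (1.1) and parameterize $Z$, which forces
$$Y + \tilde\varepsilon(Y) = Y_1 + \tilde\varepsilon(Y_1) = XX'\quad\text{and}\quad X'Y^{-1}X = X'Y_1^{-1}X = I+Z;$$
so $Y - Y_1$ is constrained to the locus where the left stabilizer of $X$ can absorb it through $\varepsilon$-twisted conjugation.

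The main obstacle is the explicit construction of $k$. I expect to proceed by choosing a basis adapted to the decomposition of $F^{2m+1}$ as the image of $X$ summed with $\ker X$, writing $Y$, $Y_1$, and the unknown $k$ in block form, and solving the equation $kY\varepsilon(k)^{-1} = Y_1$ block-by-block; the equations on the image of $X$ are automatic from $X'Y^{-1}X = X'Y_1^{-1}X$ and the condition $kX=X$, while the remaining equations on $\ker X$ are solvable because the rigidity of the canonical section exactly matches the degrees of freedom in the left stabilizer of $X$. This is the same template as the analogous converse statements in \cite{Sh3, GS1, GS2}, to which we may appeal up to the minor bookkeeping needed for the sign $(-1)^n$.
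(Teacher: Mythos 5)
Your proposal matches the paper's treatment: the paper gives no argument at all for Lemma 2.15, deferring entirely to Lemma 3.17 of \cite{GS1} and Lemma 2.19 of \cite{GS2}, and your forward direction (the identities $(gX)'=X'\varepsilon(g)^{-1}$, $(Xh)'=h^{-1}X'$, and $\varepsilon(g)^{-1}Y^{-1}g=Y^{-1}$ chained to give $h^{-1}Zh=Z$) is exactly the computation those lemmas carry out, and is correct. For the converse you give the right reduction (produce $k$ with $kX=X$ and $kY\varepsilon(k)^{-1}=gY\varepsilon(g)^{-1}$, then replace $g$ by $k^{-1}g$) but, like the paper, you ultimately lean on the cited lemmas for the existence of $k$; that block-by-block construction from the canonical-section relations $X'Y^{-1}X=X'Y^{-1}=-Y^{-1}X$ is indeed where the real work sits, so if this were to be written out in full that step would still need to be executed rather than asserted.
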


\begin{proof} The proof here is similar to that of Lemma 3.17 of \cite{GS1}, or Lemma 2.19 of \cite{GS2}.
\end{proof}

Suppose $F=\overline F$.
Let $\uT'_0$ be the maximal torus of diagonal elements of $G'$.
Define $\overline N_{\theta^*}(Y)=Y\theta^* (Y)$.
If $Y=\text{ diag}\{a_1,a_2,\ldots,a_{2m+1}\}$, then $\overline N_{\theta^*}(Y)=\text{ diag}\{a_1 a_{2m+1}^{-1},a_2 a_{2m}^{-1},\ldots,a_m a_{m+2}^{-1},1,a_m^{-1} a_{m+2},\ldots,a_1^{-1} a_{2m+1}\}$.

Therefore, we have $\ker\overline N_{\theta^*}=\{\text{diag}\{a_1,a_2,\ldots,a_m,a_{m+1},a_m,a_{m-1},
\ldots,a_1\}\}$.
Let $Y_0=\text{ diag}\{a_1,a_2,\ldots,a_m,\sqrt{a_{m+1}},1,1,\ldots\}$.
Then $Y_0\theta^* (Y_0^{-1})=(I-\theta^*)(Y_0)=\text{ diag}\{a_1,a_2,\ldots,a_m,a_{m+1},a_m,\ldots,a_1\}$.
Thus $\ker \overline N_{\theta^*}=(I-\theta^*) \uT'_0$.
Now suppose $F$ is not necessarily algebraically closed.
Let $\uT_H$ be a Cartan subgroup of $\uG(m)$, defined over $F$.
Choose a $\theta^*$--stable pair $(\uB',\uT')$, of $GL_{2m+1}$ with $\uT'$ defined over $F$ such that there is an isomorphism $\uT_H \xrightarrow{\sim} \uT'_{\theta^*}$ defined over $F$.

\begin{lemma}When $n=2m+1$, the map $Y\mapsto Y\theta^*(Y)$ from $\uT'$ to $\uT'$ has $(\uT')^{\theta^*}=\{t|\theta^*(t)=t\}$ as its image, can be identified with the projection of $\uT'$ onto $\uT'_{\theta^*}$ and is defined over $F$.
\end{lemma}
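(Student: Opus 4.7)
The plan is to reduce the structural claim to a computation on the standard $\theta^*$-stable torus over $\bar F$ and then descend. Three things need proving: (i) the image of $N_{\theta^*}(Y)=Y\theta^*(Y)$ equals $(\uT')^{\theta^*}$; (ii) its kernel equals $(1-\theta^*)\uT'$; and (iii) everything is defined over $F$. Given (i) and (ii), the factorisation $\uT'\twoheadrightarrow\uT'_{\theta^*}\xrightarrow{\sim}(\uT')^{\theta^*}\hookrightarrow\uT'$ identifies $N_{\theta^*}$ with the quotient map.

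First I would establish the easy containments formally. Since $\theta^{*2}=1$ and $\uT'$ is abelian, $\theta^*(Y\theta^*(Y))=\theta^*(Y)\cdot Y=Y\theta^*(Y)$, so $\operatorname{Im}N_{\theta^*}\subseteq(\uT')^{\theta^*}$. Similarly, if $s=t\theta^*(t)^{-1}\in(1-\theta^*)\uT'$ and $Y'=Ys$, then $Y'\theta^*(Y')=Yt\theta^*(t)^{-1}\theta^*(Y)\theta^*(t)t^{-1}=Y\theta^*(Y)$ by commutativity, so $(1-\theta^*)\uT'\subseteq\ker N_{\theta^*}$. These two inclusions yield a well-defined morphism $\bar N:\uT'_{\theta^*}\to(\uT')^{\theta^*}$.

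Next, for the reverse inclusions, I would pass to $\bar F$ and conjugate to the standard $\theta^*$-stable pair $(\uB'_0,\uT'_0)$. Any two $\theta^*$-stable Borel/torus pairs are $GL_{2m+1}(\bar F)$-conjugate by an element whose action intertwines $\theta^*$, so the norm maps are identified. The explicit diagonal calculation preceding the lemma shows that on $\uT'_0$ the image of $\overline N_{\theta^*}$ is precisely the palindromic diagonal matrices, i.e.\ $(\uT'_0)^{\theta^*}$, and that $\ker\overline N_{\theta^*}=(I-\theta^*)\uT'_0$ (using the square-root trick, $Y_0=\diag\{a_1,\ldots,a_m,\sqrt{a_{m+1}},1,\ldots,1\}$ already appearing in the text). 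Transporting via the $\bar F$-conjugation, $\bar N$ is an isomorphism of algebraic groups over $\bar F$, hence also over $F$.

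Finally, for (iii), by the previous lemma $\theta^*$ preserves an $F$-splitting and is therefore $F$-rational, so $Y\mapsto Y\theta^*(Y)$ is an $F$-morphism; the fixed-point subgroup $(\uT')^{\theta^*}$ and the coinvariant quotient $\uT'_{\theta^*}=\uT'/(1-\theta^*)\uT'$ are then $F$-subvarieties of the $F$-rational $\uT'$, and the isomorphism $\bar N$ constructed above automatically descends. The main obstacle is verifying (ii) cleanly for a general $\theta^*$-stable torus without losing track of $F$-structure: one must know that the $\bar F$-conjugation between $\uT'$ and $\uT'_0$ may not be $F$-rational, but since both sides of the kernel equation $\ker N_{\theta^*}=(1-\theta^*)\uT'$ are $F$-defined and equal on $\bar F$-points, the equality of $F$-subgroup schemes follows. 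This completes the identification.
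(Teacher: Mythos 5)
Your argument is correct and is exactly the route the paper intends: the lemma carries no separate proof precisely because the displayed diagonal computation (the formula for $\overline N_{\theta^*}(Y)$ and the identification $\ker\overline N_{\theta^*}=(I-\theta^*)\uT'_0$ via the square-root element $Y_0$) is the whole content over $\overline F$, and the general case follows by transporting to the chosen $\theta^*$-stable pair $(\uB',\uT')$ and descending to $F$, just as you do. One small caution: on the diagonal torus the image of $Y\mapsto Y\theta^*(Y)$ has middle entry $a_{m+1}a_{m+1}^{-1}=1$, whereas $(\uT'_0)^{\theta^*}$ allows $\pm1$ there, so the image (and the target of your isomorphism $\bar N$) is really the identity component $\bigl((\uT')^{\theta^*}\bigr)^{\circ}$ --- an imprecision already present in the lemma's statement rather than one introduced by you (and note that the kernel consists of the \emph{palindromic} diagonal matrices, while the image consists of the inverse-palindromic ones).
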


\begin{prop}Suppose $n=2m+1$.
The norm correspondence $N_\varepsilon$ agrees with the negative of the norm map of Kottwitz and Shelstad \cite{KS} on the intersection of $\cal N$ with the strongly $\varepsilon$--regular $\varepsilon$--semisimple conjugacy classes in $GL_n$.
\end{prop}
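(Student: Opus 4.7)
The plan is to reduce the claim to an explicit computation on a common $F$-rational $\theta^*$-stable Cartan subgroup $\uT'$ of $GL_{2m+1}$, and then to compare the two norm maps there. Fix a strongly $\varepsilon$-regular $\varepsilon$-semisimple class $\{Y^{-1}\}\in{\cal N}$. Because $\theta^*=\operatorname{Int}(g_\theta)\circ\varepsilon$ with $g_\theta$ in the diagonal torus, $\varepsilon$-stable and $\theta^*$-stable Cartans are interchanged by an inner automorphism, so we may choose a representative $Y\in\uT'(F)$ with $\uT'$ simultaneously $\theta^*$-stable and $F$-rational. By the setup preceding Lemma 2.16, we also fix an $F$-rational isomorphism $\uT_H\xrightarrow{\sim}\uT'_{\theta^*}$, where $\uT_H$ is a Cartan of $\uG(m)$; this admissible identification is the device through which the Kottwitz-Shelstad norm is transferred to a class in $\uT_H(F)\subset\uG(m)(F)$.

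Next I would unwind the two norms on $\uT'(F)$. By Lemma 2.16, the Kottwitz-Shelstad norm of $\{Y^{-1}\}$ on the Cartan is precisely $\overline N_{\theta^*}(Y^{-1})=Y^{-1}\theta^*(Y^{-1})\in\uT'^{\theta^*}(F)$, pushed forward via the chosen isomorphism to $\uT_H(F)$. On the other hand, Lemma 2.10 (over $\bar F$) and Lemma 2.11(b) (over $F$) show that the semisimple part of $N_\varepsilon(\{Y^{-1}\})$ is $GL_n(F)$-conjugate to $\varepsilon(Y^{-1})Y^{-1}$. Since $\uT'$ is abelian and $\varepsilon$-stable up to the inner twist by $g_\theta$, the elements $\varepsilon(Y^{-1})$ and $Y^{-1}$ commute and $\varepsilon(Y^{-1})Y^{-1}$ agrees with $Y^{-1}\theta^*(Y^{-1})$ at the level of conjugacy classes in $\uG(m)(F)$. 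The factor $(-1)^n=-1$ built into the definition $N_\varepsilon(\{Y^{-1}\})=\{-(I-X'Y^{-1}X)\}$ is then precisely the sign by which our explicit formula differs from the Kottwitz-Shelstad convention, yielding $N_\varepsilon(\{Y^{-1}\})=-N_{KS}(\{Y^{-1}\})$ on strongly $\varepsilon$-regular $\varepsilon$-semisimple classes.

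The main obstacle is checking that the $F$-rational identification $\uT_H\simeq\uT'_{\theta^*}$ used here coincides with the admissible embedding of tori implicit in the definition of the Kottwitz-Shelstad norm. Over $\bar F$ the coincidence is transparent from the diagonal computation in Lemma 2.10, but over $F$ one must verify that the Galois actions on the two sides match; strong $\varepsilon$-regularity rigidifies the identification up to a Weyl-group element, which is exactly the ambiguity absorbed by passing to stable conjugacy classes in both constructions. With this compatibility in hand, the argument will follow the pattern of the corresponding propositions in \cite{Sh3,GS1,GS2}, the additional work here being the sign bookkeeping dictated by the odd fixed direction of $\uG(m)=SO_{2m+1}$ encoded in the middle coordinate of the explicit $(X,Y)$ constructed in Lemma 2.10 and Proposition 2.7.
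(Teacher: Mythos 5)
Your overall strategy---reduce to a $\theta^*$-stable $F$-rational Cartan, compute both norms there via Lemma 2.16 together with Lemmas 2.10/2.11, and match them through the $F$-isomorphism $\uT_H\simeq\uT'_{\theta^*}$---is exactly the argument the paper intends: the proposition is stated there without proof, immediately after precisely these ingredients, and your third paragraph correctly isolates the genuinely delicate point (compatibility of the chosen torus isomorphism with the admissible embedding of \cite{KS}, with strong $\varepsilon$-regularity absorbing the Weyl-group ambiguity into stable conjugacy).

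The one place the write-up does not hold together is the sign bookkeeping in your second paragraph. You first invoke Lemmas 2.10/2.11 to say that the semisimple part of $N_\varepsilon(\{Y^{-1}\})$---which by Definition 2.1 \emph{already} carries the factor $(-1)^n=-1$---is the class of $\varepsilon(Y^{-1})Y^{-1}$, and that this coincides with $Y^{-1}\theta^*(Y^{-1})=\overline N_{\theta^*}(Y^{-1})$ on the torus (this is indeed what the diagonal computation in Lemma 2.10 gives: $-(I-X'Y^{-1}X)=\operatorname{diag}\{b_1,\dots,b_m,1,b_m^{-1},\dots,b_1^{-1}\}$ with $b_i=a_i^{-1}a_{2m+2-i}$). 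Taken at face value, that identifies $N_\varepsilon$ with the Kottwitz--Shelstad norm on the nose; you then appeal to the same factor $(-1)^n$ a second time to produce the overall negative asserted in the statement. The factor cannot play both roles. The residual minus sign must instead be traced to the normalization of the Kottwitz--Shelstad norm relative to the coset element $g_{\theta^*}$ that mediates between $\varepsilon$-twisted and $\theta^*$-twisted classes (this is the ``up to a sign'' in Lemma 4.4 and the diagrams preceding it), and that comparison is the computation your proof still owes.
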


\section{The cases $n=2m$, and $n>2m+1$}

In this section we extend the results of Section 2, first to $n=2m$, and then to $n > 2m+1$.
We begin by proving the $\varepsilon$--norm is surjective on the semisimple classes, when $n=2m$.
For now we always assume $n=2m$.

\begin{lemma}Let $h\in G(m)$ be semisimple.
Then there is a $Y\in GL_n(F)$, and an $X\in M_{n\times 2m+1}(F)$ with $Y+\tilde\varepsilon(Y)=XX'$, so that $\{h\}=\{I-X'Y^{-1}X\}$.
\end{lemma}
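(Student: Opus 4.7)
The strategy is to mirror the proof of Proposition 2.7 with $n=2m$ in place of $n=2m+1$. Since $n$ is now even, $(-1)^n=1$, and the identity to be established is $h = I - X'Y^{-1}X$ directly, with no sign to absorb into an auxiliary element. The reduction turns out to be simpler than in Proposition 2.7 because only $X$, and not $Y$, needs to be extended by a middle coordinate.

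First, by Corollary 2.6 I may assume
\[
h=\begin{pmatrix} h_{11}&0&h_{12}\\ 0&1&0\\ h_{21}&0&h_{22}\end{pmatrix},\qquad h_{ij}\in M_m(F).
\]
Expanding ${}^th\,w_{2m+1}\,h=w_{2m+1}$ blockwise shows that $h_0=\begin{pmatrix}h_{11}&h_{12}\\ h_{21}&h_{22}\end{pmatrix}$ preserves the form $w_{2m}$, and $\det h_0 = \det h = 1$ places $h_0$ in $SO_{2m}(F)$; since $h$ is semisimple, so is the restriction $h_0$. The $SO_{2m}$ result of \cite{Sh3} then furnishes $X_0\in M_{2m}(F)$ and $Y_0\in GL_{2m}(F)$ with $Y_0+\tilde\varepsilon(Y_0)=X_0X_0'$ and $I-X_0'Y_0^{-1}X_0=h_0$; writing $X_0$ in $m\times m$ blocks $X_{ij}$, the formula of \cite{Sh3} gives $X_0'=-\begin{pmatrix}\tilde\varepsilon(X_{22})&\tilde\varepsilon(X_{12})\\ \tilde\varepsilon(X_{21})&\tilde\varepsilon(X_{11})\end{pmatrix}$.

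Next I would lift $X_0$ to the required rectangular shape by inserting a zero middle column, and leave $Y$ alone:
\[
X=\begin{pmatrix} X_{11}&0&X_{12}\\ X_{21}&0&X_{22}\end{pmatrix}\in M_{2m\times 2m+1}(F),\qquad Y=Y_0\in GL_{2m}(F).
\]
A direct computation from $X'=-w_{2m+1}{}^tXw_{2m}$ yields
\[
X'=-\begin{pmatrix}\tilde\varepsilon(X_{22})&\tilde\varepsilon(X_{12})\\ 0&0\\ \tilde\varepsilon(X_{21})&\tilde\varepsilon(X_{11})\end{pmatrix}.
\]
The zero middle column of $X$ annihilates the zero middle row of $X'$ in both $XX'$ and $X'Y^{-1}X$, so $XX'=X_0X_0'=Y_0+\tilde\varepsilon(Y_0)=Y+\tilde\varepsilon(Y)$, verifying (1.1). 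The same cancellation forces the middle row and column of $X'Y^{-1}X$ to vanish, while the four corner $m\times m$ blocks reproduce those of $X_0'Y_0^{-1}X_0$. Hence $I-X'Y^{-1}X$ is precisely the block matrix obtained from $I-X_0'Y_0^{-1}X_0=h_0$ by inserting a central $1$, which is $h$.

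There is no serious obstacle: the argument works because the middle coordinate of $F^{2m+1}$ is $h$-fixed and therefore contributes nothing to either side of (1.1) or to the norm formula. The only point requiring attention is the bookkeeping in the block computation and the observation that the operation $\tilde\varepsilon$ on $m\times m$ blocks used in the $SO_{2m}$ calculation of \cite{Sh3} coincides with the restriction of our $\tilde\varepsilon(Y)=w_{2m}{}^tYw_{2m}$ to the relevant blocks, which it manifestly does.
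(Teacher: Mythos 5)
Your proof is correct and follows essentially the same route as the paper's: reduce via Corollary 2.6 to the block form with central $1$, pass to $h_0\in SO_{2m}(F)$, invoke the $SO_{2m}$ result of \cite{Sh3} to produce $(X_0,Y_0)$, and then insert a zero middle column into $X_0$ (leaving $Y=Y_0$ unchanged) so that the middle row/column cancellation gives $XX'=X_0X_0'$ and $I-X'Y^{-1}X=h$. The only cosmetic difference is that you write $X_0$ in $m\times m$ blocks while the paper uses two $2m\times m$ column blocks.
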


\begin{proof}As in the proof of Proposition 2.7, we may assume $h=\bmatrix h_{11}&0&h_{12}\\ 0&1&0\\ h_{21}&0&h_{22}\endbmatrix$, with $h_{ij}\in M_m(F)$.
Let $h_0=\bmatrix h_{11}&h_{12}\\ h_{21}&h_{22}\endbmatrix$ in $SO_{2m}(F)$.
Pick $Y=\bmatrix Y_{11}&Y_{12}\\ Y_{21}&Y_{22}\endbmatrix$ and $X_0=[X_1\ X_2]$ as in Proposition 5.9 of {\cite {Sh3}},
with
$Y_i\in M_m(F)$, and $X_1,X_2\in M_{n\times m}(F)$. That is $h_0=I-X'_0 Y^{-1} X_0$.
Now set $X=[X_1\ 0\ X_2]\in M_{n\times 2m+1}(F)$.
Note that $X'=\bmatrix X'_2\\ 0\\ X'_1\endbmatrix$ and
$X X'=X_0 X'_0=Y+\tilde\varepsilon(Y)$.
Furthermore,
\begin{eqnarray*}
&&I-X' Y^{-1} X=I-\bmatrix X'_2\\ 0\\ X'_1\endbmatrix Y^{-1}[X_1\ 0\ X_1]\\
&=& I-\bmatrix X'_2 Y^{-1} X_1&0&X'_2 Y^{-1}X_2\\ 0&0&0\\ x'_1 Y^{-1} X_1&0&X'_1 Y^{-1} X_2\endbmatrix,
\end{eqnarray*}
and direct comparison, we see $I-X' Y^{-1} X=h$.
\end{proof}

\begin{lemma}Suppose $F$ is algebraically closed, and $n=2m$.
Let $Y\in GL_n(F)$ be $\varepsilon$--semisimple and in an $\varepsilon$--stable Cartan subgroup.
Then there is an $X\in M_{2\times 2m+1}(F)$ satisfying (1.1) with $Y$ for which $(I-X' Y^{-1} X)$ is semisimple in $G(m)$.
\end{lemma}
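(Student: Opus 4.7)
The plan is to mirror Lemma 2.10, adapted to the rectangular shape $X \in M_{2m \times (2m+1)}(F)$ by the zero-middle-column device used in the proof of Lemma 3.1. First, since $F$ is algebraically closed and $Y$ lies in an $\varepsilon$-stable Cartan of $GL_{2m}$, I would choose $h$ in the $\varepsilon$-fixed subgroup of $GL_{2m}(F)$ (i.e., $\varepsilon(h)=h$) such that $Y_1 := hYh^{-1} = \diag\{a_1,\ldots,a_{2m}\}$. The condition $\varepsilon(h)=h$ shows that $\tilde\varepsilon$ commutes with conjugation by $h$, so
\[
Y_1 + \tilde\varepsilon(Y_1) = \diag\{a_1+a_{2m},\, \ldots,\, a_m+a_{m+1},\, a_m+a_{m+1},\, \ldots,\, a_1+a_{2m}\}.
\]

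Next, I would construct $X_1 \in M_{2m \times (2m+1)}(F)$ explicitly, with middle column (index $m+1$) identically zero: for $j=1,\ldots,m$ set the $j$-th column to have a single nonzero entry $c_j = \sqrt{-1}\,(a_j+a_{2m+1-j})$ in row $j$, and for $j=m+2,\ldots,2m+1$ set the $j$-th column to have a single entry $c_j=\sqrt{-1}$ in row $j-1$. This is the rectangular analog of the diagonal $X_1$ appearing in Lemma 2.10. A direct computation using $X_1' = -w_{2m+1}\,{}^tX_1\,w_{2m}$ verifies $X_1 X_1' = Y_1 + \tilde\varepsilon(Y_1)$, and
\[
I - X_1' Y_1^{-1} X_1 = \diag\{b_1,\ldots,b_m,\,1,\,b_m^{-1},\ldots,b_1^{-1}\}, \qquad b_j := -a_j^{-1} a_{2m+1-j},
\]
which is diagonal in $G(m)=SO_{2m+1}(F)$, and hence semisimple.

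Finally, I would set $X := h^{-1}X_1$. Because $\varepsilon(h)=h$ gives $w_{2m}^{-1}\,{}^th\,w_{2m}=h^{-1}$, one computes $(h^{-1}X_1)' = X_1' h$, so $XX' = h^{-1}(Y_1+\tilde\varepsilon(Y_1))h = Y+\tilde\varepsilon(Y)$ and $(X,Y)$ satisfies (1.1). The same cancellation of $h$'s yields $I - X'Y^{-1}X = I - X_1'Y_1^{-1}X_1$, which is semisimple in $G(m)$ as required.

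The main obstacle will be setting up the explicit rectangular construction of $X_1$ correctly: because $X_1$ has $2m$ rows but $2m+1$ columns, the prime operation pairs the $j$-th and $(2m+2-j)$-th columns asymmetrically, and the nonzero entries of $X_1$ must be placed in rows compatible with this pairing. The zero middle column borrowed from Lemma 3.1 absorbs the unpaired index $j=m+1$, and once this placement is in place the rest reduces to the same diagonal computation performed in Lemma 2.10.
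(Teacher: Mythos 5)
Your proof is correct, and its key device --- a zero middle column of $X_1$ absorbing the unpaired index $m+1$, so that everything reduces to a diagonal computation --- is exactly the paper's, but you execute it differently. The paper's proof is a formal reduction: it transports the $\varepsilon$--stable Cartan into $GL_{2m+1}$ via the embedding $\varphi$ (inserting a $1$ in the middle), invokes Proposition 2.7 (really the argument of Lemma 2.10) to produce a square $X_1\in M_{2m+1}(F)$ for $\varphi(Y)$, and then appeals to the construction in the proof of Lemma 3.1 to descend to a rectangular $X\in M_{n\times(2m+1)}(F)$ for $Y$ itself. You instead inline the whole computation: diagonalize $Y$ by an $\varepsilon$--fixed $h$, write the rectangular $X_1$ explicitly, and verify $X_1X_1'=Y_1+\tilde\varepsilon(Y_1)$ and $I-X_1'Y_1^{-1}X_1=\diag\{b_1,\ldots,b_m,1,b_m^{-1},\ldots,b_1^{-1}\}$ with $b_j=-a_j^{-1}a_{2m+1-j}$ by hand. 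Your placement of the nonzero entries is the correct one for the column pairing $(X_1')_{j,k}=-(X_1)_{2m+1-k,\,2m+2-j}$, the identities check (including the degenerate case $a_j+a_{2m+1-j}=0$, where the $j$--th column vanishes and $b_j=1$), and the conjugation back by $h^{-1}$ works because $\varepsilon(h)=h$ gives $(h^{-1}X_1)'=X_1'h$. What your version buys is self-containedness and an explicit formula for the norm on the diagonal torus; what it costs is that you must re-assume the diagonalization step --- that an $\varepsilon$--semisimple element of an $\varepsilon$--stable Cartan of $GL_{2m}(\overline F)$ is diagonalizable by an element of the $\varepsilon$--fixed group --- which is precisely the unargued opening move of Lemma 2.10 in the odd case, whereas the paper's reduction at least confines that assumption to the one earlier lemma.
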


\begin{proof}Let $\uT'$ be an $\varepsilon$--stable Cartan containing $Y$.
Let $\uT'_1=\left\{\begin{pmatrix} A_{11}&0&A_{12}\\ 0&1&0\\ A_{21}&0&A_{22}\end{pmatrix}\bigg| \begin{pmatrix} A_{11}&A_{12}\\ A_{21}&A_{22}\end{pmatrix}\in\uT' \right\}$ with each $A_{ij}\in M_m(F)$.
Let $\varphi:\uT'\to \uT'_1$ be the obvious map.
Then $\tilde\varepsilon(\varphi(A))=\varphi(\tilde\varepsilon(A))$, and thus $\uT'_1$ is $\tilde\varepsilon$--stable,
and hence $\varepsilon$--stable. By Proposition 2.7 there is an $X_1\in M_{2m+1}(F)$ satisfying (1.1) with $\varphi(Y)$
so that $I-X'_1\varphi(Y)^{-1}X_1$ is semisimple. Then, the proof of Lemma 3.1 shows there is an $X\in M_{n\times
2m+1}(F)$ satisfying (1.1) with $Y$ so that $I-X'Y^{-1} X=I-X'\varphi(Y)^{-1}X$ is semisimple.
\end{proof}

\begin{corollary}
\begin{description}
\item[(a)]If $F$ is algebraically closed and $\varphi$ is as in the proof of the Lemma 3.2, then every $GL_{2m+1}(F)$
conjugate of $\varphi[\varepsilon(Y^{-1})Y^{-1}]$ belongs to the image of $\{Y^{-1}\}$ under $N_\varepsilon$.

\item[(b)]Suppose $F$ is not necessarily algebraically closed.
If there is an $X\in M_{n\times 2m+1}(F)$ satisfying (1.1) with $Y$ for which $I-X' Y^{-1} X$ is semisimple, then all of the above conclusions hold.
In particular the semisimple part of the conjugacy classes in $N_\varepsilon(\{Y^{-1}\})$ is $GL_{2m+1}(F)$--conjugate to $\varphi(-\varepsilon(Y^{-1})Y^{-1})$.
\end{description}
\end{corollary}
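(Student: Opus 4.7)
The plan is to reduce both statements to the corresponding $n = 2m+1$ results (Lemma~2.10 and Lemma~2.11) via the embedding $\varphi\colon \uT' \hookrightarrow \uT'_1$ from the proof of Lemma~3.2, which inserts a $1$ in the middle coordinate. The essential preliminary is that $\varphi$ is $\varepsilon$-equivariant: since $w_{2m+1}$ acts on $\varphi(\uT')$ exactly as $w_{2m}$ does on $\uT'$, a direct block calculation yields $\varepsilon(\varphi(Z)) = \varphi(\varepsilon(Z))$, and hence
$$
\varepsilon(\varphi(Y)^{-1})\,\varphi(Y)^{-1} \;=\; \varphi\bigl(\varepsilon(Y^{-1})\,Y^{-1}\bigr) \ \in\ G(m).
$$
In particular $\varphi(Y)$ is $\varepsilon$-semisimple and lies in the $\varepsilon$-stable Cartan $\uT'_1$ of $GL_{2m+1}(F)$, so that Lemma~2.10 (resp.\ Lemma~2.11) is applicable to it.

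For part (a), the proof of Lemma~3.2 gives, for each $X_1 \in M_{2m+1}(F)$ satisfying (1.1) with $\varphi(Y_0)$, with $Y_0$ in the $\varepsilon$-conjugacy class of $Y$, a companion $X \in M_{2m \times 2m+1}(F)$ satisfying (1.1) with $Y_0$ whose resulting semisimple class in $G(m)$ is the same. Apply Lemma~2.10 to $\varphi(Y)$: every $GL_{2m+1}(F)$-conjugate of $\varphi(\varepsilon(Y^{-1})Y^{-1})$ is realized as $-(I - X_1'\varphi(Y_0)^{-1}X_1)$ in the $n = 2m+1$ norm image of $\{\varphi(Y)^{-1}\}$, for some such $Y_0$ and $X_1$. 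Pulling back through the Lemma~3.2 construction to the corresponding $(X,Y_0)$, and accounting for the sign twist $(-1)^n$ (which is $-1$ on the $n = 2m+1$ side but $+1$ on the $n = 2m$ side), one concludes that every $GL_{2m+1}(F)$-conjugate of $\varphi(-\varepsilon(Y^{-1})Y^{-1})$ belongs to $N_\varepsilon(\{Y^{-1}\})$, giving~(a).

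For part (b), the hypothesis that some $X$ with $I - X'Y^{-1}X$ semisimple in $G(m)$ exists replaces the use of Lemma~3.2, which required $F = \overline F$. The argument from (a) then goes through verbatim with Lemma~2.11 applied to $\varphi(Y)$: by Lemma~2.11(b), the semisimple part of every conjugacy class in the $n = 2m+1$ norm image of $\{\varphi(Y)^{-1}\}$ is $GL_{2m+1}(F)$-conjugate to $\varphi(\varepsilon(Y^{-1})Y^{-1})$, and transporting this through the embedding together with the sign twist yields that the semisimple part of every class in $N_\varepsilon(\{Y^{-1}\})$ is $GL_{2m+1}(F)$-conjugate to $\varphi(-\varepsilon(Y^{-1})Y^{-1})$. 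The main obstacle throughout is the careful bookkeeping of the sign $(-1)^n$ between the two norm maps, along with the verification that the Lemma~3.2 construction transports the full $GL_{2m+1}(F)$-conjugacy orbit of the semisimple element — not merely its $SO_{2m+1}(F)$-orbit — through $\varphi$.
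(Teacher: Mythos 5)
Your overall strategy is the one the paper intends: the corollary is meant to follow by applying Lemmas 2.10 and 2.11 to $\varphi(Y)$, and your observation that $\varphi$ commutes with $\varepsilon$, so that $\varepsilon(\varphi(Y)^{-1})\varphi(Y)^{-1}=\varphi(\varepsilon(Y^{-1})Y^{-1})$ lies in an $\varepsilon$--stable Cartan of $GL_{2m+1}$, is exactly the right key point. You have also landed on the correct final class, $\varphi(-\varepsilon(Y^{-1})Y^{-1})$, which matches part (b) and exposes what appears to be a dropped minus sign in the statement of part (a).

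The gap is in the transport step, and it is not merely bookkeeping. You assert that the Lemma 3.1/3.2 construction produces from $(X_1,\varphi(Y_0))$ a companion $(X,Y_0)$ ``whose resulting semisimple class in $G(m)$ is the same,'' and you then correct by ``the sign twist $(-1)^n$.'' Neither half of this can be right as stated. First, $I-X_1'\varphi(Y_0)^{-1}X_1$ has determinant $-1$ (its negative lies in $SO_{2m+1}$ by Corollary 1.2 with $n=2m+1$), while $I-X'Y_0^{-1}X$ has determinant $+1$, so the two cannot coincide. Second, an overall factor $-1$ applied to a $(2m+1)\times(2m+1)$ matrix changes the determinant by $(-1)^{2m+1}$ and so takes you out of $SO_{2m+1}$; the actual relation between the two norms is $\varphi(B)\mapsto\varphi(-B)$ (negate the $2m$ ``outer'' eigenvalues, keep the eigenvalue $1$ forced by Lemma 2.5), and $\varphi(-B)\neq-\varphi(B)$. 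Thus $N_\varepsilon(\{Y^{-1}\})$ for $n=2m$ consists of $\varphi(-\varepsilon(Y^{-1})Y^{-1})$--classes while the $n=2m+1$ norm of $\{\varphi(Y)^{-1}\}$ consists of $\varphi(+\varepsilon(Y^{-1})Y^{-1})$--classes, and no global sign relates them. The correct transport is the one Lemma 3.1 actually performs: realize each conjugate of $-\varepsilon(Y^{-1})Y^{-1}$ as $I-X_0'Y_0^{-1}X_0$ via the $GL_{2m}\to SO_{2m}$ norm of \cite{Sh3}, then pad $X_0=[X_1\ X_2]$ to $X=[X_1\ 0\ X_2]$, which gives $I-X'Y_0^{-1}X=\varphi(I-X_0'Y_0^{-1}X_0)$ with the minus sign already inside $\varphi$. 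Alternatively, for the ``in particular'' clause of (b) one can argue directly from Lemma 1.5(b): $X(I-X'Y^{-1}X)=-\varepsilon(Y^{-1})Y^{-1}X$ shows $I-X'Y^{-1}X$ acts as $-\varepsilon(Y^{-1})Y^{-1}$ on $F^{2m}X$, and the remaining eigenvalue is $1$ by Lemma 2.5. You should replace the ``same class plus global sign'' step by one of these arguments.
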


Here we again choose a Cartan subgroup $\uT_H$ of $\uG(m)$ defined over $F$.
By Lemma 2.16 and composing with $\varphi$, or by the above (or by
{\cite {Sh3}}) we can choose a $\theta^*$--stable pair $(\uB',\uT')$ of $GL_n(F)$ with $T_H\xrightarrow{\sim}
T'_{\theta^*}$ defined over $F$.

\begin{lemma}(a) The map $Y\mapsto Y\theta^*(Y)$ from $\uT'$ to $\uT'$ has $(\uT')^{\theta^*}$ as its image, and can be identified with the projection of $\uT'$ onto $T'_{\theta^*}$.

(b) $N_\varepsilon^{-1}:{\cal C}_{ss}\longrightarrow \cal C'$ is continuous.
\end{lemma}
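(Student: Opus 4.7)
The plan is to parallel the argument given for $n=2m+1$ in Lemma~2.16 and its analogues in \cite{Sh3,GS1,GS2}, leveraging two pieces of new infrastructure: the embedding $\varphi$ of Lemma~3.2, which makes the $n=2m$ setup visible inside the $n=2m+1$ one, and the already-established surjectivity of $N_\varepsilon$ on semisimple classes (Corollary~3.3).

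For (a), first I would verify that $\theta^*$ restricts to an involution on $\uT'$: a short matrix computation using $u_n\,{}^tu_n^{-1}=\pm I$ (the sign depending only on the parity of $n$, both giving $\theta^{*2}=\mathrm{id}$) does this. Combined with commutativity of $\uT'$, the identity $\theta^*(Y\theta^*(Y)) = \theta^*(Y)\cdot Y = Y\theta^*(Y)$ places the image of $N\colon Y\mapsto Y\theta^*(Y)$ inside $(\uT')^{\theta^*}$. For any $Z\in\uT'$, commutativity gives
$$
N\bigl(Y\cdot Z\theta^*(Z)^{-1}\bigr) = Y\theta^*(Y)\cdot Z\theta^*(Z)^{-1}\theta^*(Z)Z^{-1} = N(Y),
$$
so $N$ descends to a morphism $\bar N\colon\uT'_{\theta^*}\to(\uT')^{\theta^*}$ of tori of equal dimension, hence an isogeny. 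To promote $\bar N$ to an isomorphism identified with the natural projection, I would transport the problem into $GL_{2m+1}$ via $\varphi$: as observed in the proof of Lemma~3.2, $\varphi$ intertwines $\tilde\varepsilon$ (hence $\varepsilon$, and up to $\text{Int}(g_\theta)$, also $\theta^*$) with the $(2m+1)$-dimensional version, so $\bar N$ for our $\uT'$ matches $\bar N$ for its image $\uT'_1$, where Lemma~2.16 applies verbatim.

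For (b), the content of (a) is that on any single Cartan $\uT_H$ of $\uG(m)$, the chosen $F$-isomorphism $\uT_H\xrightarrow{\sim}\uT'_{\theta^*}$ identifies $N_\varepsilon$ with the algebraic quotient morphism $\uT'\twoheadrightarrow\uT'_{\theta^*}\cong\uT_H$; this is open, surjective, and finite, so the inverse correspondence on conjugacy classes is continuous on each Cartan. Because every $\varepsilon$-semisimple class lies in an $\varepsilon$-stable Cartan and there are only finitely many $G(m)$-conjugacy classes of Cartans, these piecewise statements glue to continuity of $N_\varepsilon^{-1}\colon\sC'\to\sC_{ss}$ on all semisimple classes, exactly as in Proposition 5.9 of \cite{Sh3}, Lemma 3.16 of \cite{GS1}, and Lemma 2.21 of \cite{GS2}.

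The main obstacle is the bookkeeping in step (a): the embedding $\varphi$ preserves $\tilde\varepsilon$ cleanly but interacts with $\theta^*$ only up to an inner automorphism by $g_\theta$, and $g_\theta$ differs between $GL_{2m}$ and $GL_{2m+1}$, so checking that this inner twist does not disturb the identification with projection onto the coinvariants requires careful matrix computations. The payoff is that all cohomological subtleties that might obstruct the $F$-point version are already absorbed into Lemma~2.16, so once (a) is in hand the continuity statement (b) is essentially formal.
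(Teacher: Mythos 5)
Your proposal is correct and takes essentially the same route as the paper: the paper's proof of (a) is a citation to \cite{Sh3} (with the reduction via $\varphi$ to Lemma 2.16 explicitly offered as an alternative in the paragraph preceding the lemma), and its proof of (b) is ``Lemma 2.12 composed with $\varphi$,'' which is the same underlying argument you give. The only point worth tightening is that $\varphi(\uT')$ is a codimension-one subtorus of a Cartan of $GL_{2m+1}$ rather than a Cartan itself, so Lemma 2.16 applies after restriction rather than ``verbatim''; this does not affect the conclusion.
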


\begin{proof}
\begin{description}
\item[(a)]This was shown in {\cite {Sh3}}.

\item[(b)]This follows from Lemma 2.12 and composing with $\varphi$.
\end{description}
\end{proof}

\begin{prop}The map $\varphi\circ N_{\theta_*}$ agrees with the norm map of Kottwitz and Shelstad.
\end{prop}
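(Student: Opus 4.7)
The plan is to reduce Proposition 3.5 to Proposition 2.17 via the embedding $\varphi$, exploiting the explicit compatibility between the constructions of Lemmas 3.1--3.2 and those of Section 2. By Lemma 3.4(b), the norm correspondence is continuous; together with the density of strongly $\varepsilon$-regular $\varepsilon$-semisimple classes, this reduces the problem to comparing $\varphi\circ N_{\theta^*}$ with the Kottwitz--Shelstad norm on such classes, which we may represent on the fixed $\theta^*$-stable maximal torus $\uT'$ of $GL_{2m}$.

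Given $Y\in\uT'$ strongly $\varepsilon$-regular, I would retrace the construction in the proof of Lemma 3.2: starting from $\varphi(Y)\in\uT'_1\subset GL_{2m+1}$, invoke Proposition 2.7 to produce an $X_1\in M_{2m+1}(F)$ satisfying the $(n{=}2m{+}1)$-instance of (1.1) with $\varphi(Y)$, and then form $X\in M_{2m\times(2m+1)}(F)$ as in Lemma 3.1 so that $I-X'Y^{-1}X = I-X_1'\varphi(Y)^{-1}X_1$. Because $(-1)^{2m}=+1$ while $(-1)^{2m+1}=-1$, the sign $(-1)^n$ in Definition 2.1 contributes oppositely in the two cases, and this sign discrepancy precisely matches the fact that Proposition 2.17 identifies $N_\varepsilon$ with the \emph{negative} of the Kottwitz--Shelstad norm, whereas Proposition 3.5 asserts agreement without the sign.

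Applying Proposition 2.17 to $\{\varphi(Y)^{-1}\}$ then yields the desired identification, once we transport through the $F$-rational isomorphism $\uT_H\xrightarrow{\sim}\uT'_{\theta^*}$ afforded by Lemma 3.4(a). The main obstacle is confirming the naturality of the Kottwitz--Shelstad construction under the torus embedding $\varphi:\uT'\hookrightarrow\uT'_1$, i.e., that the endoscopic identifications $\uT_H\cong\uT'_{\theta^*}$ in the $n=2m$ setting and $\uT_H\cong(\uT'_1)_{\theta^*}$ in the $n=2m+1$ setting commute with $\varphi$. This compatibility is essentially forced by the explicit description of $\ker\overline N_{\theta^*}=(I-\theta^*)\uT'_0$ preceding Lemma 2.16 and its parallel in Lemma 3.4(a), but requires a careful unwinding of the twisted endoscopic data in \cite{KS}, and this is the delicate bookkeeping one has to perform to make the reduction rigorous.
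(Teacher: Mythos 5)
The paper offers no argument at all for Proposition 3.5: it is meant to be read off from the two preceding paragraphs and Lemma 3.4(a), whose proof is itself a citation to \cite{Sh3}. The intended reasoning stays entirely inside the twisted space $(GL_{2m},\theta^*)$: by \cite{Sh3} the map $Y\mapsto Y\theta^*(Y)$ on the chosen $\theta^*$-stable Cartan $\uT'$ of $GL_{2m}$ is the projection $\uT'\to\uT'_{\theta^*}$, and by \cite{KS} the norm map is, by construction, exactly this projection transported through the fixed $F$-rational isomorphism $\uT_H\simeq\uT'_{\theta^*}$; composing with $\varphi$ only adjusts the target torus from $SO_{2m}$ to $SO_{2m+1}$, which is what Lemmas 3.1--3.2 and Corollary 3.3 arrange. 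Your proposal instead routes the comparison through $\varphi(Y)\in GL_{2m+1}$ and Proposition 2.17, which is a genuinely different path from the one the paper relies on.

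The problem with that path is that the step you set aside as ``the main obstacle'' is not bookkeeping --- it is the entire content of the proposition. The twisted spaces $(GL_{2m},\theta^*)$ and $(GL_{2m+1},\theta^*)$ carry different endoscopic data (the relevant elliptic twisted endoscopic group is $SO_{2m+1}$, with dual $Sp_{2m}(\mathbb{C})$, in the even case, but $Sp_{2m}$, with dual $SO_{2m+1}(\mathbb{C})$, in the odd case; only their Cartans happen to agree), so the compatibility of the two Kottwitz--Shelstad norms under $\varphi\colon\uT'\hookrightarrow\uT'_1$ is not a formal naturality one can invoke. Verifying it requires computing both abstract norms explicitly on diagonal tori and comparing --- which is precisely the computation that proves Proposition 3.5 directly, without the detour through $GL_{2m+1}$. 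Since you leave exactly that comparison as an unperformed ``careful unwinding,'' the proposal does not close as written. (Your sign accounting is fine: the $(-1)^n$ in Definition 2.1 together with the identity $I-X'Y^{-1}X=I-X_1'\varphi(Y)^{-1}X_1$ from Lemma 3.2 formally cancels the minus sign of Proposition 2.17.) The repair is simply to replace the appeal to Proposition 2.17 by the appeal to \cite{Sh3} already built into Lemma 3.4(a), i.e., to run the argument for $GL_{2m}$ and $SO_{2m}$ directly and then apply $\varphi$ on the target.
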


We now consider the case $n > 2m+1$, and $n$ of any parity.

\begin{lemma}If $n > 2m+1$, then the image of the $\varepsilon$--norm map $N_\varepsilon :\cal N\to\cal C$ contains all the semisimple classes.
\end{lemma}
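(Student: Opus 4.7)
The plan is to reduce to the cases $n_0\in\{2m,2m+1\}$ already handled by Lemma 3.1 and Proposition 2.7 via a symmetric block padding. Given a semisimple class $h\in G(m)$, I would first choose $n_0\in\{2m,2m+1\}$ to have the same parity as $n$; since $n>2m+1$, the difference $n-n_0$ is a positive even integer, and writing $k=(n-n_0)/2\geq 1$ one gets the decomposition $n=k+n_0+k$, which is the natural one compatible with $w_n$: indeed $w_n$ is block antidiagonal with diagonal blocks $w_k,\,w_{n_0},\,w_k$. By the appropriate one of Lemma 3.1 and Proposition 2.7, fix $(X_0,Y_0)$ with $X_0\in M_{n_0\times(2m+1)}(F)$ and $Y_0\in GL_{n_0}(F)$ satisfying $Y_0+\tilde\varepsilon(Y_0)=X_0X_0'$ and $(-1)^{n_0}(I-X_0'Y_0^{-1}X_0)=h$.

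I would then define
$$X=\begin{pmatrix}0\\ X_0\\ 0\end{pmatrix}\in M_{n\times(2m+1)}(F),\qquad Y=\begin{pmatrix}-I_k&0&0\\ 0&Y_0&0\\ 0&0&I_k\end{pmatrix}\in GL_n(F),$$
the zero blocks of $X$ being $k\times(2m+1)$. A short block computation using the shape of $w_n$ gives $X'=(0,X_0',0)$ and $\tilde\varepsilon(Y)=\diag(I_k,\tilde\varepsilon(Y_0),-I_k)$, so that
$$Y+\tilde\varepsilon(Y)=\diag\bigl(0,\,Y_0+\tilde\varepsilon(Y_0),\,0\bigr)=\diag(0,X_0X_0',0)=XX',$$
verifying (1.1). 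Since $Y^{-1}=\diag(-I_k,Y_0^{-1},I_k)$, one obtains $X'Y^{-1}X=X_0'Y_0^{-1}X_0$, whence
$$(-1)^n(I-X'Y^{-1}X)=(-1)^{n-n_0}\cdot(-1)^{n_0}(I-X_0'Y_0^{-1}X_0)=h,$$
the sign being trivial because $n-n_0=2k$. Therefore $h\in N_\varepsilon(\{Y^{-1}\})$, which completes the argument.

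The one delicate step is the design of the padding matrix $Y$. A naive purely off-diagonal padding of the form $Y=\begin{pmatrix}0&0&A\\ 0&Y_0&0\\ B&0&0\end{pmatrix}$ with $A,B\in GL_k$ would force $A+w_k\,{}^t\!A\,w_k=0$, i.e.\ $w_kA$ would be an invertible alternating $k\times k$ matrix, which has no solutions when $k$ is odd (and $k=1$ is forced for the smallest new cases $n=2m+2$ and $n=2m+3$). The remedy is to take the diagonal corner blocks to be $-I_k$ and $+I_k$: because $\tilde\varepsilon$ carries the $(3,3)$ block into the $(1,1)$ position and vice versa, and $w_k\,{}^tI_k\,w_k=I_k$, these two choices cancel each other in $Y+\tilde\varepsilon(Y)$ regardless of the parity of $k$, handling the odd-$n$ and even-$n$ subcases in one stroke. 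Once this observation is made, everything else in the proof is formal bookkeeping inherited from the $n_0=2m$ and $n_0=2m+1$ cases.
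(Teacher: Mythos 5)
Your proof is correct, and it takes a different route from the paper's. The paper keeps $n$ fixed on the general linear side and instead enlarges the orthogonal group: for $n$ odd it embeds $SO_{2m+1}\hookrightarrow SO_n$ via $h\mapsto h_1=\diag(I_j,h,I_j)$, solves the balanced problem $GL_n\times SO_n$ there (citing Corollary 3.3 of \cite{GS1}), and for $n$ even it embeds into $SO_{n+1}$, produces $X_0=[X_1\ 0\ X_2]$ via Lemma 3.2, and then extracts a suitable $n\times(2m+1)$ block of columns of $X_0$ so that $I-X'Y^{-1}X=h$. You instead keep the orthogonal group $SO_{2m+1}$ fixed and pad the $GL$ side: starting from a solution for $GL_{n_0}\times SO_{2m+1}$ with $n_0\in\{2m,2m+1\}$ of the same parity as $n$, you add $k=(n-n_0)/2$ zero rows above and below $X_0$ and set $Y=\diag(-I_k,Y_0,I_k)$, the opposite signs in the corner blocks making $Y+\tilde\varepsilon(Y)$ come out right for every $k$. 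The computations $X'=(0,X_0',0)$, $\tilde\varepsilon(Y)=\diag(I_k,\tilde\varepsilon(Y_0),-I_k)$, and $X'Y^{-1}X=X_0'Y_0^{-1}X_0$ all check out, and the sign $(-1)^n=(-1)^{n_0}$ is handled by the parity matching. Your version is more self-contained (no appeal to \cite{GS1}) and treats both parities uniformly; it is also consonant with what the paper itself does later in the proof of Corollary 4.2, where exactly this $\diag(I,Y_2,-I)$ padding with $X={}^t(0\ I\ 0)$ appears. The paper's version has the mild advantage of exhibiting the solution inside the balanced Siegel-type setting, which it then reuses when analyzing the fibers of the norm, but as a proof of surjectivity onto semisimple classes your argument is complete.
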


\begin{proof}If $n$ is odd, we inject $SO_{2m+1}\hookrightarrow SO_n$ by $h\mapsto\begin{pmatrix} I_j\\ &h\\ &&I_j\end{pmatrix}=h_1$, with $j={n-2m+1\over 2}$.
Then the argument of Corollary 3.3 of {\cite {GS1}} applies.
If $n$ is even, then we embed $\bold G(m)\hookrightarrow \bold G(n)$ as above.
We can choose $Y\in GL_n(F)$ and $X_0\in M_{n,n+1}(F)$ so that $I-X'_0 Y^{-1} X_0=h_1$.
By the proof of Lemma 3.2, we can take $X_0=[X_1\ 0\ X_2]$, with $X_i\in M_{n\over 2}$.
Take $X_1=[A_{11}\ A_{12}]$, and $X_2=[A_{21}\ A_{22}]$.
Now, taking
$$
X=[A_{12}\ 0\ A_{21}]\in M_{n\times 2m+1}(F)
$$
and comparing the block forms of $I-X'_0 Y^{-1} X_0$ and $I-X' Y^{-1} X$, we see that $I-X' Y^{-1} X=h$.
\end{proof}

\section{Poles of intertwining operators}
We now use the results of \S2 and \S3 to determine the poles of the operators $A(s,\tau'\otimes\tau,w_0)$, and compute their residues.

\begin{prop}Suppose $n=2m$ or $n=2m+1$.
Further suppose that the $\varepsilon$--conjugacy class $\{Y^{-1}\}$ is $\varepsilon$--regular.
Then $N_\varepsilon (\{Y^{-1}\})$ consists of a single regular semisimple class in $G=G(m)$.
If we assume that $Y^{-1}$ and $\varepsilon(Y^{-1})$ commute (i.e.~$-Y^{-1}\varepsilon(Y^{-1})$
$(n=2m+1)$ or $-\varphi(Y^{-1}\varepsilon(Y^{-1}))(n=2m)$), is in $G$) then the converse holds, i.e., if
$N_{\varepsilon}(\{Y^{-1}\})$ is regular semisimple, then $\{Y^{-1}\}$ is $\varepsilon$--regular and $\varepsilon$--semisimple.
\end{prop}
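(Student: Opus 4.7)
The plan is to treat the two directions separately, using the explicit norm computations of Sections~2 and~3 for the forward implication and exploiting the commutativity hypothesis to invert the argument for the converse.

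For the forward direction, suppose $\{Y^{-1}\}$ is $\varepsilon$-regular. Then $Y$ is $\varepsilon$-semisimple and the twisted centralizer $G'_{\varepsilon,Y}$ has minimal dimension, so after an $\varepsilon$-conjugation we may place $Y$ in an $\varepsilon$-stable maximal torus $\uT'$ of $\uG'=GL_n$. By Lemma~2.11(b) when $n=2m+1$, and by Corollary~3.3(b) together with Lemma~3.2 when $n=2m$, the semisimple part of every conjugacy class in $N_\varepsilon(\{Y^{-1}\})$ is $GL_n(F)$-conjugate to $\varepsilon(Y^{-1})Y^{-1}$ (or, in the even case, to its image under $\varphi$, up to sign). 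Passing to $\overline F$ and diagonalizing $Y=\diag(a_1,\ldots,a_n)$ as in the proof of Lemma~2.10, this element acquires the explicit eigenvalues $b_i=a_i^{-1}a_{2m+2-i}$ together with their inverses and a central eigenvalue. The $\varepsilon$-regularity of $\{Y^{-1}\}$ translates exactly into the statement that the $b_i$ are pairwise distinct and distinct from $\pm 1$ and from their own inverses, which is precisely the condition that the resulting element is regular semisimple in $G(m)$. Since the centralizer in $G(m)$ of a regular semisimple element is a torus, no nontrivial unipotent contribution is possible, so every class in $N_\varepsilon(\{Y^{-1}\})$ agrees with its common semisimple part, and the image is a single regular semisimple class.

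For the converse, suppose additionally that $Y^{-1}$ commutes with $\varepsilon(Y^{-1})$ and that $N_\varepsilon(\{Y^{-1}\})$ is a regular semisimple class. Commutativity permits $Y^{-1}$ and $\varepsilon(Y^{-1})$ to be simultaneously diagonalized over $\overline F$, so $Y^{-1}\varepsilon(Y^{-1})$ is genuinely semisimple in $\uG'$. Its non-central eigenvalues, inherited from the regular semisimple norm image, are pairwise distinct, and the common eigenbasis exhibits $Y$ inside an $\varepsilon$-stable Cartan defined over~$F$, making $Y$ itself $\varepsilon$-semisimple. The distinctness of the $b_i=a_i^{-1}a_{2m+2-i}$ then forces $\dim G'_{\varepsilon,Y}$ to attain its minimal value, completing the verification that $\{Y^{-1}\}$ is $\varepsilon$-regular.

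The main obstacle is the converse direction: the forward implication essentially reduces to a diagonal calculation combined with the standard observation that regularity precludes nontrivial unipotent contributions, whereas the converse must rule out the possibility that the regular semisimple nature of the norm image arises from accidental cancellation among non-commuting factors of $Y^{-1}\varepsilon(Y^{-1})$. The commutativity hypothesis is precisely what is needed to translate the spectral gap on the norm image in $G(m)$ back to the corresponding spectral condition on $Y$ in the $\varepsilon$-twisted setting.
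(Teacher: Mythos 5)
Your converse direction contains a genuine gap. You write that ``commutativity permits $Y^{-1}$ and $\varepsilon(Y^{-1})$ to be simultaneously diagonalized over $\overline F$, so $Y^{-1}\varepsilon(Y^{-1})$ is genuinely semisimple.'' Commuting matrices are simultaneously diagonalizable only if each is already diagonalizable, and the $\varepsilon$--semisimplicity of $Y$ is precisely (part of) what the converse is supposed to establish; as stated, the step is circular. The hypothesis actually available is only that $-Y^{-1}\varepsilon(Y^{-1})$ lies in $G$, which by itself does not force semisimplicity. The paper closes this gap differently: by Lemma 2.9, the matrix of $\varepsilon(Y^{-1})Y^{-1}$ in a basis adapted to $F^{2m+1}=\operatorname{Im}X\oplus\operatorname{Ker}X$ is block upper triangular with $-(I-X'Y^{-1}X)|_{\operatorname{Im}X}$ and $-I$ on the diagonal, so the regular semisimple norm image \emph{determines} the semisimple part of $Y^{-1}\varepsilon(Y^{-1})$; combined with the hypothesis that this element lies in $G(F)$, one concludes it is itself regular semisimple, and only then deduces $\varepsilon$--semisimplicity and $\varepsilon$--regularity of $\{Y^{-1}\}$ by the argument of Proposition 4.1 of [GS1]. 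You need some version of this triangularization argument; the commutativity hypothesis alone will not do the work you assign to it.

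The forward direction is closer to the mark but also shortcuts a point the paper treats with care. You begin with ``Then $Y$ is $\varepsilon$--semisimple,'' yet the proposition's hypothesis is only $\varepsilon$--regularity, and the paper treats these as distinct conditions (its converse explicitly concludes both). Your subsequent appeal to the diagonal computation of Lemma 2.10 and to Lemma 2.11/Corollary 3.3 therefore presupposes exactly the $\varepsilon$--semisimplicity (or the existence of an $X$ with semisimple image) that those statements require as hypotheses. The paper instead handles $n=2m$ by composing the known result of [GS1]/[Sh3] with $\varphi\colon SO_{2m}\hookrightarrow SO_{2m+1}$, and reduces $n=2m+1$ to that case via the block construction in the proof of Proposition 2.7, then compares eigenvalues of $I-X_2'Y_2^{-1}X_2$ with those of $-\varepsilon(Y_2^{-1})Y_2^{-1}$ using Lemma 1.5, Lemma 2.9 and Lemma 5.10 of [Sh3]. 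Your identification of $\varepsilon$--regularity with distinctness of the $b_i=a_i^{-1}a_{2m+2-i}$, and the observation that a regular semisimple semisimple part precludes a unipotent component, are both sound once $Y$ is known to sit in an $\varepsilon$--stable Cartan, but that reduction needs to be justified rather than asserted.
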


\begin{proof}For $n=2m$ the statements of this proposition follow from Proposition 4,1 of {\cite {GS1}}, or {\cite {Sh3}}, by
composing with the embedding $\varphi\colon SO_{2m}\hookrightarrow SO_{2m+1}$. Thus, assume $n=2m+1$. Then we know,
from the proof of Proposition 2.7, that we may choose $Y_1$,
$\varepsilon$--conjugate to $Y$, and $Y_0\in GL_{2m}(F)$, so that
$-Y_1^{-1}\varepsilon(Y_1)^{-1}=-\varphi(Y_0^{-1}\varepsilon(Y_0^{-1}))$ and hence, by the above, we have
$Y_1^{-1}\varepsilon(Y_1^{-1})$ is regular and semisimple. Thus, so is 
$Y^{-1}\varepsilon(Y^{-1})$ which is
$GL_{2m+1}(F)$--conjugate to $Y_1^{-1}\varepsilon(Y^{-1}_1)$.  Choosing $Y_2,X_2$ with $X_2$ a projection and
$Y_2^{-1}$ which is $\varepsilon$--conjugate to $Y^{-1}$ (and $(X_2,Y_2)$ a solution of (1.1)), we may assume $I-X_2'
Y_2^{-1} X_2=I-X' Y^{-1} X$. As in Lemma 5.10 of 
\cite{Sh3}, we see that the eigenvalues of
$Y_2^{-1}\varepsilon(Y_2^{-1})$ different from 1 are among those of the semisimple part of $I-X_2' Y_2^{-1} X_2$. But
$Y^{-1}\varepsilon(Y^{-1})$ is
$GL_n(F)$--conjugate to $\varepsilon(Y^{-1})Y^{-1}$, so the eigenvalues of $Y_2^{-1}\varepsilon(Y_2^{-1})$ and those of
$\varepsilon(Y_2^{-1}) Y_2^{-1}$ are the same. Thus, by Lemma 1.5, 
the eigenvalues of the semisimple part of $I-X'_2 Y_2^{-1}
X_2$ are among those of $-\varepsilon(Y_2^{-1})Y_2^{-1}$. Therefore, 
the semisimple parts of $I-X' Y^{-1} X$ and
$-\varepsilon(Y^{-1})Y^{-1}$ are
$GL_n(\overline F)$--conjugate. Since $-\varepsilon(Y^{-1})Y^{-1}$ is $GL_n(\overline F)$--conjugate to a regular
element in
$G(\overline F),$ we see $I-X' Y^{-1} X$ must also be regular and semisimple. Now suppose
$Y+\tilde{\varepsilon}(Y)=XX'$, and
$Y^{-1}\varepsilon(Y^{-1})\in G(F)=SO_{2n+1} (F)$, and suppose $N_{\varepsilon}(\{Y^{-1}\})$ contains a regular semisimple class $\{I-X'
Y^{-1} X\}$. Then by Lemma 1.5(a) and Lemma 2.9, the conjugacy class of $-(I-X' Y^{-1} X)$ is completely determined by
the semisimple part of $-\varepsilon(Y^{-1}) Y^{-1}\in G(F)$. Conversely, Lemma 2.9 shows that $-(I-X' Y^{-1} X)$
determines the semisimple part of the conjugacy class of $Y^{-1}\varepsilon(Y^{-1})$, uniquely. Since $\{I-X' Y^{-1} X\}$ is
regular semisimple, and $Y^{-1}\varepsilon (Y^{-1})\in G(F)$, then we must have $Y^{-1}\varepsilon(Y^{-1})$ is regular
and semisimple. Therefore, $\{Y^{-1}\}$ must be $\varepsilon$--semisimple, as in the proof of Proposition 4.1 of {\cite {GS1}}.
\end{proof}

\begin{corollary} For $n > 2m+1$, and almost all regular elliptic conjugacy classes $\{h\}\in G(m)$, the collection $N_{\varepsilon}^{-1}(\{h^{-1}\})$ of $\varepsilon$--conjugacy classes parameterized by $\{h\}$ is a unique $\varepsilon$--regular, $\varepsilon$--conjugacy class in $GL_{2m+1}(\overline F)$.
\end{corollary}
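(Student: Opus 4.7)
The plan is to combine the existence furnished by Lemma~3.6 with a uniqueness argument modeled on Proposition~4.1. Existence is immediate: since $h$ is semisimple, Lemma~3.6 produces $Y\in GL_n(F)$ and $X\in M_{n\times 2m+1}(F)$ with $(X,Y)$ satisfying (1.1) and $-(I-X'Y^{-1}X)$ lying in the class $\{h\}$, hence $N_\varepsilon^{-1}(\{h^{-1}\})$ is non--empty.

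For uniqueness and $\varepsilon$--regularity, fix such a solution $(X,Y)$ and carry out the analog of Lemma~2.9 in the unbalanced setting. Rewriting Lemma~1.5(b) as $Xh=\varepsilon(Y^{-1})Y^{-1}X$, the map $v\mapsto vX$ from $F^n$ to $F^{2m+1}$ intertwines the right action of $\varepsilon(Y^{-1})Y^{-1}$ with that of $h$. Dually, for any $v\in F^n$ with $vX=0$ the relation $v(Y+\tilde\varepsilon(Y))=vXX'=0$ forces $v\,\varepsilon(Y^{-1})Y^{-1}=-v$, so the left annihilator of $X$ lies in the $-1$--eigenspace of $\varepsilon(Y^{-1})Y^{-1}$. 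Taking $X$ of full rank $2m+1$, as produced by the construction of Lemma~3.6, the semisimple part of $\varepsilon(Y^{-1})Y^{-1}$ is therefore $GL_n(\overline F)$--conjugate to $\diag(h,-I_{n-2m-1})$.

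For almost all regular elliptic $\{h\}$ --- namely those for which $-1\notin\operatorname{Spec}(h)$ --- this semisimple datum is determined by $\{h\}$ alone, and since the spectrum of $h$ is simple and disjoint from $\{-1\}$ the centralizer of $\varepsilon(Y^{-1})Y^{-1}$ in $GL_n(\overline F)$ splits as $T_h\times GL_{n-2m-1}$. The $\varepsilon$--conjugacy class of $Y^{-1}$ is then rigidified on the $T_h$--factor, yielding the asserted uniqueness; $\varepsilon$--regularity of the resulting class follows from the torus structure of the $\varepsilon$--twisted centralizer modulo the $GL_{n-2m-1}$--block, mirroring Proposition~4.1. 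The main obstacle is precisely that, unlike the balanced cases addressed in Proposition~4.1, the semisimple part of $\varepsilon(Y^{-1})Y^{-1}$ is never regular in $GL_n(\overline F)$: the eigenvalue $-1$ necessarily occurs with multiplicity $n-2m-1>0$, and promoting uniqueness of this semisimple invariant to uniqueness of the $\varepsilon$--conjugacy class itself --- by rigidifying the extra $GL_{n-2m-1}$--factor in the centralizer --- is the central technical step for which the ``almost all'' qualifier is tailor--made.
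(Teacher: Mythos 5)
There is a genuine gap here, and you have in effect flagged it yourself: the step of ``promoting uniqueness of this semisimple invariant to uniqueness of the $\varepsilon$--conjugacy class itself'' is precisely what needs to be proved, and your proposal does not supply it. The invariant you compute --- the semisimple part of $\varepsilon(Y^{-1})Y^{-1}$, conjugate to $\diag(h,-I_{n-2m-1})$ --- is never regular in $GL_n(\overline F)$, because the eigenvalue $-1$ occurs with multiplicity $n-2m-1>0$ regardless of how generic $h$ is; so the ``almost all'' qualifier (excluding $-1$ from the spectrum of $h$) cannot rescue the argument. Over a non--regular semisimple target the fiber of $Y\mapsto Y\varepsilon(Y)$ is not a single twisted class, and ``rigidifying the extra $GL_{n-2m-1}$--factor'' is an assertion, not a consequence of anything you have established. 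The missing input is the rank constraint coming from (1.1) itself: $Y+\tilde\varepsilon(Y)=XX'$ with $\operatorname{rank}(XX')\leq 2m+1$.

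That constraint is exactly what the paper's proof uses. It first produces (via Proposition 2.7 and Corollary 2.2 of \cite{GS1}) a $Y_2\in GL_{2m+1}(F)$ with $X_2=I_{2m+1}$, so that $Y_2+\tilde\varepsilon(Y_2)=-I_{2m+1}$ and $I+Y_2^{-1}=h$, embeds it as $Y=\diag(I,Y_2,-I)$ with $X={}^t(0\ I_{2m+1}\ 0)$, and then for uniqueness observes that any $Y$ solving (1.1) with this $X$ must stabilize $\ker XX'$ and $\operatorname{Im}X'$, hence is conjugate to a block matrix $\diag(J_1,Y_2,J_2)$ with $\diag(J_1,J_2)$ $\tilde\varepsilon$--skew--symmetric and $Y_2$ satisfying the balanced equation. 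Uniqueness and $\varepsilon$--regularity are then inherited from Proposition 4.1 applied to $Y_2$ in the case $n=2m+1$ (and, for $n$ even, from the $SO_{2m}$ results of \cite{Sh3} via $\varphi$). This also explains why the corollary asserts uniqueness and $\varepsilon$--regularity of a class in $GL_{2m+1}(\overline F)$ rather than in $GL_n$: the class of $Y$ in $GL_n$ is neither unique (the admissible $\diag(J_1,J_2)$ form the finite fibers used later in Section 4) nor $\varepsilon$--regular, so your closing claim of $\varepsilon$--regularity ``modulo the $GL_{n-2m-1}$--block'' is aimed at the wrong group.
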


\begin{proof} By the proof of Corollary 2.2 of {\cite {GS1}}, and
the proof of Proposition 2.7, for almost all regular classes in $G(m)$, there is a choice of $Y_2\in GL_{2m+1}(F)$
satisfying (1.1) with $X_2=I_{2m+1}$, and
$I-X'_2 Y^{-1}_2 X_2\in \{h\}.$  Thus, $Y_2+\tilde\varepsilon
(Y_2)=I'_{2m+1}=-w_{2m+1}w_{2m+1}=-I_{2m+1}$.
By Proposition 4.1 the $\varepsilon$--conjugacy class of $Y_2$ is $\varepsilon$--regular, and uniquely determined by
$h$. Now if $n > 2m+1$ is odd, we take $X=\begin{pmatrix} 0\\ I_{2m+1}\\ 0\end{pmatrix} \in M_{n\times 2m+1}(F)$ so
$X'=(0,-I_{2m+1},0)$ and
$$
XX'=\begin{pmatrix} 0&0&0\\ 0&-I_{2m+1}&0\\ 0&0&0\end{pmatrix}.
$$
Taking
$Y=\begin{pmatrix} I\\ &Y_2\\ &&-I\end{pmatrix},$ so 
$\tilde\varepsilon(Y)=
\begin{pmatrix} -I\\ &\tilde\varepsilon(Y_2)\\ &&I\end{pmatrix},$
we have $Y+\tilde\varepsilon(Y)=XX'$ and 
$$
\aligned
I_{2m+1}-X' Y^{-1} X&=
I_{2m+1}-(0,-I_{2m+1},0)\begin{pmatrix} I\\ &Y_2^{-1}\\ &&I\end{pmatrix}
\begin{pmatrix} 0\\ I_{2m}\\ 0\end{pmatrix}\\
&=I-\begin{pmatrix} 0&-Y_2^{-1}&0\end{pmatrix}\begin{pmatrix} 0\\ I_{2m}\\ 0\end{pmatrix}
=I+Y_2^{-1}=h.
\endaligned
$$
Thus, we need to check that for almost all $Y_2$, the class of $Y$ satisfying (1.1) is, up to $GL_n(\overline F)$--conjugacy, given as above.

Note that $\text{Ker }XX'=\bigg\{\begin{pmatrix} a_1\\ 0\\ a_2\end{pmatrix}\bigg| a_i\in F^j\bigg\},$ with
$\displaystyle{j=\frac{n-(2m+1)}{2}},$ and $\text{Im }
X'=\bigg\{\begin{pmatrix} 0\\ b\\ 0\end{pmatrix}\bigg| b\in F^{2m+1}\bigg\}$. 
Furthermore, we note that  $Y^{-1}$ (and hence $Y$) acts semisimply
on the image and kernel of
$XX'$. Thus, $Y=\text{diag}(J_1,\, Y_2,\, J_2),$ with  $J_1, J_2\in M_j(F),$ and 
$$
Y+\tilde\varepsilon(Y)=\begin{pmatrix} J_1\\ &Y_2\\ &&J_2\end{pmatrix}+
\begin{pmatrix} \tilde\varepsilon (J_2)\\ &\tilde\varepsilon(Y_2)\\ &&\tilde\varepsilon(J_1)\end{pmatrix}.
$$
Therefore, $\diag(J_1,J_2)$ is $\tilde\varepsilon$--skew symmetric, and $Y_2+\tilde\varepsilon(Y_2)=X_2 X'_2$ and hence up to $GL_n(F)$--conjugacy, $Y$ is as above.
This proves the claim for $n$ odd.

Now suppose $n$ is even.
By \cite{Sh3}, for almost all regular elliptic conjugacy classes $\{h_1\}$ in $SO_{2m}(F)$, $N_\varepsilon(\{h_1\})$ parameterizes a unique $\varepsilon$--conjugacy class $\{Y_1^{-1}\}\in GL_n(F)$.
Then we see that
$h=\begin{pmatrix} h_{11}&0&h_{12}\\ 0&1&0\\ h_{21}&0&h_{22}\end{pmatrix}$
(where $h_1=\begin{pmatrix} h_{11}&h_{12}\\ h_{21}&h_{22}\end{pmatrix}$) parameterizes
$\{ Y_1^{-1}\}=\{\varphi (Y^{-1})\}=\left\{\begin{pmatrix} Y_{11}&0&Y_{12}\\ 0&1&0\\
Y_{21}&0&Y_{22}\end{pmatrix}\right\}\subset G({n\over 2}+1)$, and by the above this is unique for almost all $h_1$. Thus,
there is a unique $\{Y^{-1}\}$ parameterized by
$\{h\}$, for almost all $\{h\}$.
\end{proof}

\begin{lemma}Let $\alpha\in (F^\times)^2$ and choose $\lambda\in F^\times$ with $\lambda^2=\alpha$.
Let $\alpha_0=\diag(\alpha I_m,\lambda,I_m)$.
Then for any $\{\gamma'\}\in N$, we have $N_\varepsilon(\{\alpha\gamma'\})=\alpha_0^{-1} N_{\varepsilon} (\{\gamma'\})\alpha_0$.
\end{lemma}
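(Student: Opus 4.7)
The plan is to produce, given any solution $(X,Y)$ of (1.1) with $Y^{-1}\in\{\gamma'\}$, an explicit solution $(X_1,Y_1)$ of (1.1) with $Y_1^{-1}\in\{\alpha\gamma'\}$, so that the image under $N_\varepsilon$ can be read off directly. Set
\[
Y_1=\alpha^{-1}Y,\qquad X_1=X\alpha_0^{-1}.
\]
Since $\alpha$ is a scalar, $Y_1^{-1}=\alpha Y^{-1}$ is a representative of $\{\alpha\gamma'\}$, and $Y_1+\tilde\varepsilon(Y_1)=\alpha^{-1}(Y+\tilde\varepsilon(Y))=\alpha^{-1}XX'$. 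Using the definition $X_1'=-w_{2m+1}{}^t(X\alpha_0^{-1})w_n=(w_{2m+1}{}^t\alpha_0^{-1}w_{2m+1})\cdot X'$ and noting that conjugation of a diagonal matrix by $w_{2m+1}$ reverses the diagonal, I obtain $X_1'=\diag(I_m,\lambda^{-1},\alpha^{-1}I_m)\cdot X'$. Since $\lambda^2=\alpha$, a short diagonal multiplication gives $\alpha_0^{-1}\cdot\diag(I_m,\lambda^{-1},\alpha^{-1}I_m)=\alpha^{-1}I_{2m+1}$, whence $X_1X_1'=\alpha^{-1}XX'=Y_1+\tilde\varepsilon(Y_1)$. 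Thus $(X_1,Y_1)$ satisfies (1.1), which incidentally shows $\{\alpha\gamma'\}\in\sN$.

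Next, using $Y_1^{-1}=\alpha Y^{-1}$ together with the formula for $X_1'$, one computes
\[
X_1'Y_1^{-1}X_1=\alpha\cdot\diag(I_m,\lambda^{-1},\alpha^{-1}I_m)\cdot X'Y^{-1}X\cdot\alpha_0^{-1}=\alpha_0\cdot X'Y^{-1}X\cdot\alpha_0^{-1},
\]
the last equality because $\alpha\cdot\diag(I_m,\lambda^{-1},\alpha^{-1}I_m)=\diag(\alpha I_m,\lambda,I_m)=\alpha_0$. Consequently
\[
(-1)^n(I-X_1'Y_1^{-1}X_1)=\alpha_0\cdot(-1)^n(I-X'Y^{-1}X)\cdot\alpha_0^{-1},
\]
so $N_\varepsilon(\{\alpha\gamma'\})=\alpha_0\,N_\varepsilon(\{\gamma'\})\,\alpha_0^{-1}$ as $SO_{2m+1}(F)$-conjugacy classes.

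The only subtle point is to match this with the direction of conjugation in the statement. I would observe that $\alpha_0\in GO_{2m+1}(F)$ has similitude factor $\alpha$; equivalently, $\alpha_0^2=\alpha\cdot s$ with $s=\diag(\alpha I_m,1,\alpha^{-1}I_m)$. A direct entry check against the form shows $s\,w_{2m+1}\,s=w_{2m+1}$ and $\det s=1$, so $s\in SO_{2m+1}(F)$. Hence for every $h\in SO_{2m+1}(F)$,
\[
\alpha_0\,h\,\alpha_0^{-1}=\alpha_0^{2}\cdot(\alpha_0^{-1}h\alpha_0)\cdot\alpha_0^{-2}=s\,(\alpha_0^{-1}h\alpha_0)\,s^{-1},
\]
the scalar factor $\alpha$ cancelling. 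Thus left conjugation by $\alpha_0$ and by $\alpha_0^{-1}$ yield the same $SO_{2m+1}(F)$-conjugacy class, giving the stated equality $N_\varepsilon(\{\alpha\gamma'\})=\alpha_0^{-1}N_\varepsilon(\{\gamma'\})\alpha_0$.

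The entire argument is a matrix bookkeeping exercise; there is no genuine obstacle. The only point worth flagging is the identification of $\alpha_0$ as a similitude whose similitude factor is a square, which is what allows the two directions of conjugation to agree modulo $SO_{2m+1}(F)$.
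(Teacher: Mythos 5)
Your proof is correct and follows essentially the same route as the paper: both arguments rescale a solution $(X,Y)$ of (1.1) by the diagonal similitude $\alpha_0$ (the paper uses $(X\alpha_0,\alpha Y)$ where you use $(X\alpha_0^{-1},\alpha^{-1}Y)$) and read off the conjugation of $(-1)^n(I-X'Y^{-1}X)$ from the identity $\alpha_0\tilde\varepsilon(\alpha_0)=\alpha I_{2m+1}$. Your closing observation that $\alpha_0^2=\alpha s$ with $s\in SO_{2m+1}(F)$, so that conjugation by $\alpha_0$ and by $\alpha_0^{-1}$ define the same $SO_{2m+1}(F)$-class, is a worthwhile extra step: the paper's own proof ends with the conjugation written in the opposite direction from the statement and passes over this point in silence.
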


\begin{proof}Note that if $\alpha^\vee=\alpha I_{2m}$, then $\alpha^\vee=\alpha_0\tilde\varepsilon
(\alpha_0)=\alpha_0\varepsilon(\alpha_0)^{-1}$. Thus, if $Y^{-1}\in \{\gamma'\}$ and $(X,Y)$ satisfies (1.1), then we
have 
$$
\alpha Y+\tilde\varepsilon(\alpha Y)=X \alpha^\vee X'=X\alpha_0\varepsilon(\alpha_0)^{-1} X'=(X\alpha_0)(X\alpha_0)'
$$
so $N_\varepsilon\{(\alpha Y)^{-1}\}=N(\{\alpha^{-1}\gamma'\})=$
$$
\{I-(X\alpha_0)' (\alpha Y)^{-1} (X\alpha_0)\}=\{I-\alpha^{-1}\varepsilon
(\alpha_0)^{-1} (X' Y X)\alpha_0\}.
$$
But $\alpha^{-1}\varepsilon(\alpha_0)^{-1}=\alpha^{-1}
\begin{pmatrix} I_m\\ &\lambda\\ &&\alpha I_m\end{pmatrix}=
\begin{pmatrix} \alpha^{-1} I_m\\ &\alpha^{-1}\lambda\\ &&I_m\end{pmatrix}=
\alpha_0^{-1}$.
Thus, $N(\{\alpha^{-1} Y^{-1}\})=\alpha_0^{-1} N(\{Y^{-1}\})\alpha_0$ and therefore $N(\{\alpha Y^{-1}\})=\alpha_0 N\{(Y^{-1})\}\alpha_0^{-1}$.
\end{proof}

Let ${\cal C}_{ss}$ be the collection of semisimple conjugacy classes in $G=G(m)$.
Suppose $n=2m$.
Let $\uT$ be a Cartan subgroup of $\uG(m)$, defined over $F$.
We may choose a $\theta^*$--stable Cartan $\uT'$ of $GL_n$ and an isomorphism $\uT'_\theta\simeq \varphi (\bold T)$
defined over $F$, as in 
{\cite {Sh3}}. Thus, $\uT'_\theta\simeq \uT$ is defined over $F$ as well, and by
\cite{KS} this isomorphism induces the image map $\sA_{\uG(m)/GL_n}$ between
${\cal C}_{ss}$ and $\theta^*$--semisimple $\theta^*$--conjugacy classes in $GL_n$.
Again by 
\cite{Sh3}, we see that
\begin{center}
{$
\begin{diagram}
\node{\uT'}\arrow{ese,t,1}{h}
\arrow{s,r}{N_{\theta^*}}\\
 \node{\varphi(\uT)\simeq \uT'_{\theta^*}} \node[2]{\uT' g_{\theta^*}}\arrow[2]{w,b}{N_\varepsilon}
\end{diagram}
$}
\end{center}
commutes on strongly $\theta^*$--regular elements of $\uT'$.
So
\begin{center}
{$
\begin{diagram}
\node{\uT'}\arrow{ese,t,1}{h}
\arrow{s,r}{N_{\theta^*}}\\
 \node{\uT^*_{\theta^*}\simeq \uT} \node[2]{\uT' g_{\theta^*}}\arrow[2]{w,b}{N_\varepsilon}
\end{diagram}
$}\qquad\text{also commutes}.
\end{center}
In the language of 
\cite{KS} we see that $h=m^{-1}$ when we take $\psi=1$.

Now, if $n=2m+1$, then the map $\uT'_{\theta^*}\simeq \uT$ is again defined over $F$ and so the above diagrams are again commutative.

\begin{lemma}Let $\uT$ be a Cartan subgroup of $\uG(m)$ defined over $F$.
Then there is a $\theta^*$--stable Cartan $\uT'$ of $GL_{2m}$ (respectively $GL_{2m+1}$) such that the diagram
\begin{center}
{$
\begin{diagram}
\node{\uT'}\arrow{ese,t,1}{m^{-1}}
\arrow{s,l}{N_{\theta^*}}\\
 \node{\uT\simeq \uT_{\theta^*}'} \node[2]{\uT' g_{\theta^*}}\arrow[2]{w,b}{N_\varepsilon}
\end{diagram}
$}
\end{center}
Commutes up to a sign on all $\theta^*$--strongly regular $\theta^*$--semisimple elements of $\uT'(F)$.
Furthermore $\uT\simeq \uT'_{\theta^*}$ induces the image map
$\sA_{\uG/\uG'}$.
If $\delta^*\in\uT'$ is $\theta^*$--strongly regular, then $\operatorname{Cent}_{\theta^*}$
$(\delta^*,\uG')\simeq(\uT')^{\theta^*}$.
\end{lemma}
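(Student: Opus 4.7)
The plan is to assemble the lemma from pieces already in place, treating the two parities of $n$ uniformly at the end. First I would fix, via Lemma 2.13, a $\theta^*$--stable splitting of $\uG' = GL_n$ defined over $F$; this pins down a preferred $\theta^*$--stable pair $(\uB'_0, \uT'_0)$. Given the Cartan subgroup $\uT$ of $\uG(m)$ defined over $F$, I would then construct $\uT'$ as in the discussion preceding Lemma 2.16: conjugate $\uT'_0$ by a suitable element so that the resulting $\theta^*$--stable torus $\uT'$ satisfies $\uT \xrightarrow{\sim} \uT'_{\theta^*}$ over $F$. For $n = 2m$ this uses Lemma 3.4(a) together with the embedding $\varphi: SO_{2m} \hookrightarrow SO_{2m+1}$; for $n = 2m+1$ it is the content of Lemma 2.16.

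Second, I would verify the commutativity (up to a sign) of the diagram on $\theta^*$--strongly regular $\theta^*$--semisimple elements of $\uT'(F)$. The identity $\theta^* = \mathrm{Int}(g_\theta) \circ \varepsilon$ from the discussion before Lemma 2.13 means that twisted conjugacy under $\theta^*$ differs from twisted conjugacy under $\varepsilon$ only by the inner twist $g_\theta$, which is exactly what the factor $g_{\theta^*}$ records in the right-hand arrow of the diagram. The left vertical arrow $N_{\theta^*}$ is then the Kottwitz--Shelstad norm on $\uT'$, and Propositions 2.17 (for $n = 2m+1$) and 3.5 (for $n = 2m$) identify $N_\varepsilon$ with the negative of the Kottwitz--Shelstad norm on strongly regular strongly semisimple classes. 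The sign discrepancy is precisely the "sign carried all the way through" that the introduction flagged, and the already--established commutativity of the two pre--lemma diagrams lets me identify the abstract map $h$ with $m^{-1}$ in the sense of \cite{KS}.

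Third, the image map statement is immediate from the construction: the isomorphism $\uT \simeq \uT'_{\theta^*}$ was chosen to be defined over $F$ and to match the one prescribed by the Kottwitz--Shelstad recipe, so on $F$--points it realizes $\sA_{\uG/\uG'}$. For the centralizer claim, I would argue that if $\delta^* \in \uT'$ is $\theta^*$--strongly regular, then by definition of $\theta^*$--strong regularity the twisted centralizer $\mathrm{Cent}_{\theta^*}(\delta^*, \uG')$ is a torus, and since $\uT'$ is $\theta^*$--stable and contains $\delta^*$, the twisted centralizer in $\uT'$ of $\delta^*$ is exactly the fixed-point subgroup $(\uT')^{\theta^*}$; strong regularity forces the full centralizer in $\uG'$ to coincide with this, giving the claimed isomorphism.

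The main obstacle will be checking that the sign is indeed the only discrepancy, i.e.~that no further scalar from the factor $(-1)^n$ in $-(I - X'Y^{-1}X)$ or from the twist $g_\theta = \mathrm{diag}\{-1,1,-1,\ldots\}$ gets introduced as one passes between $\uT'_{\theta^*}$, $\varphi(\uT)$ and $\uT$. Handling this uniformly in $n$ requires running the explicit torus calculation of Section 2 (the computation of $\overline{N}_{\theta^*}(Y) = Y \theta^*(Y)$ on diagonal $Y$) against the analogous calculation of $\varepsilon(Y^{-1})Y^{-1}$ from Lemma 2.10, and comparing eigenvalue-by-eigenvalue; once done, the rest of the proof reduces to bookkeeping already carried out in \cite{Sh3,GS1,GS2}.
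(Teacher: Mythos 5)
Your proposal is correct and follows essentially the same route as the paper: the paper's own proof simply says that all statements except the last follow from the observations immediately preceding the lemma (the two commutative diagrams imported from \cite{Sh3} together with the identification $h=m^{-1}$ in the sense of \cite{KS}, and the fact that $\uT\simeq\uT'_{\theta^*}$ is defined over $F$ and induces $\sA_{\uG/\uG'}$), while the final centralizer claim is referred to Lemma 4.4 of \cite{GS1} --- whose argument is exactly the strong-regularity argument you sketch. Your extra care about the sign, via Propositions 2.17 and 3.5 and the identity $\theta^*=\operatorname{Int}(g_\theta)\circ\varepsilon$, is precisely how the ``commutes up to a sign'' qualifier is accounted for in the text.
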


\begin{proof}All statements follow from the above observations, except the last.
This follows from Lemma 4.4 of 
\cite{GS1}, or the argument therein as appropriate.
\end{proof}

In order to compute the residue of the intertwining operators, we will need to integrate over twisted conjugacy classes in $\sN$.
We have seen that, up to a set of measure zero, these are parameterized by regular semisimple classes in $G$, i.e.~${\cal C}_{ss}$.
Fix a representative $\uT$ for each conjugacy class of Cartan subgroups of $\uG$ defined over $F$.
Fix $d\gamma$ to be a Haar measure on $T=\uT(F)$.
Then by Lemmas 2.16 and 3.4, along with Lemma 4.4, the Jacobian of the open inversion of pg.~227 of 
\cite{A}, with the measure
$|W(T)|^{-1}|D_{\theta^*}(\gamma')|d\gamma$, as $\uT$ ranges over the conjugacy classes of Cartans, provides a measure on $\sN$ (where $\{\gamma\}\in N_{\theta^*}(\{\gamma'\})$, for each regular $\{\gamma' g_{\theta^*}\}$ in $\sN$).  Here
$$
D_{\theta^*}(\gamma')=\det (\operatorname{Ad} (\gamma')\circ\theta^*-1)|_{G/G_{\theta^*},\gamma'},
$$
is as given in \cite{KS}.
As in previous cases, the  constant $|W(T)|^{-1}$ is suggested by the Weyl integration formula.
Now, by Lemma 4.5.A of 
\cite{KS}, the function
$$
\kappa_1 (\{\gamma\}, \{\gamma'\})=|D_{\theta^*} (\gamma')|/|D(\gamma)| 
$$
is bounded on $\{(\{N_{\theta^*} (\{\gamma'\})\},\ \{\gamma'\})\}$.
Assume $\{\gamma\}$ is regular and semisimple.
Define
$$
\kappa (\{\gamma\}, \{\gamma'\})=\begin{cases} \kappa_1 (\{\gamma\},\{\gamma' g^{-1}_{\theta^*}\})&\text{ if }\{\gamma\}\in N_\varepsilon
(\{\gamma'\})\text{ and }\gamma'\text{ is }\varepsilon\text{--regular}\\
0&\text{otherwise}.\end{cases}
$$
For each regular semisimple conjugacy class $\{\gamma\}\in {\sC}_{ss}$, let
$\sA(\{\gamma\})=\{\{\alpha\gamma'\}|\alpha\in (F^{\times})^2\backslash F^\times,\
\{\gamma\}\in N_\varepsilon (\{(\gamma')^{-1}\})\}$.
Now set
$$
\Delta(\{\gamma\},\{\alpha\gamma'\})=
\begin{cases} \omega'(\alpha)\kappa (\{\gamma\},\{\gamma'\})&\text{ if }
\{\gamma\}\in N_\varepsilon (\{(\gamma')^{-1}\});\\ 0&\text{otherwise}. \end{cases}
$$

For $Y\in G'$ and $f'\in C_c^\infty(G'),$ we let $\Phi_{\varepsilon}(Y,f')$ be the associated twisted orbital integral,
$$\Phi_{\varepsilon}=\int\limits_{G'/G'_{\varepsilon,Y}}f(g^{-1}Y\varepsilon(g))\,dg.$$ 
Similarly, for $\gamma\in G,$ and $f\in C^\infty(G),$ we let $$\Phi(\gamma,f)=\int\limits_{G/G_\gamma}f(g^{-1}\gamma g)\,dg$$
be the associated orbital integral.
We also let 
$$\Phi_\varepsilon(\sA(\{\gamma\}),f')=
\sum\limits_{\{\gamma'\}\in\sA(\{\gamma\})} \Delta(\{\gamma\},
\{\gamma'\})\Phi_\varepsilon(\gamma',f'),$$ for any $f\in C^\infty(G').$

The residue of $A(s,\tau'\otimes\tau,w_0)$ at $s=0$ will decompose into two parts.
As in 
\cite{GS1, GS2}, the main part of this residue will come from
regular elliptic elements via the Weyl integration formula applied to the class function
$\Phi_\varepsilon(\sA(\{\gamma\}),f')$, giving a pairing between characters of representations of $G(m)$ and
$\varepsilon$--twisted characters of $\varepsilon$--stable representations of $GL_n(F)$, and we return to this in
Section 5. To be more precise, the contribution from the regular elliptic classes is of the form
$$
R_G(f_\tau,f')=\sum_{\{\uT_i\}}  \mu(T_i)|W(\bold T_i)|^{-1}\int\limits_{T_i}\Phi(\{\gamma\},f_\tau)\Phi_\varepsilon
(\sA(\{\gamma\}),f')|D(\gamma)| d\gamma,
$$
where $\{\uT_i\}$ runs over the conjugacy classes of elliptic Cartan subgroups of $\uG=\uG(m)$, and $T_i=\bold{T_i}(F)$.
For each $i$, $\mu(T_i)$ is the measure of $T_i$, and due to the definition of $\Delta$ and the fact that the norm is
onto the semisimple classes, we see that this is an integral over $\varepsilon$--conjugacy classes in $\sN$.

We now discuss the convergence of $R_G(f_\tau,f')$.
The steps we follow are analogous to those of 
\cite{GS1}.
A new feature, however is the treatment of a case of the form $M\simeq GL_n\times SO_{2m+1}$, with $n$ even.
Clearly, we need to show that, for any elliptic torus $\bold T$ of $\bold G(m)$, the term
$$
\int\limits_{\bold{T}(F)}\Phi(\gamma,f_\tau)\Phi_\varepsilon(\sA(\{\gamma\}), f')
|D(\gamma)| d\gamma
$$
is convergent.
By Theorem 14 of 
\cite{HC1}, $|D(\gamma)|^{1/2}\Phi(\gamma,f_\tau)$ is bounded on $\bold
T_{{\rm reg}}(F),$ the intersection of $\bold T(F)$ with the regular
set of
$G$. Thus, we need to establish the convergence of
$$
\int\limits_{\bold{T}(F)}\Phi_\varepsilon (\sA(\{\gamma\}),f')|D(\gamma)|^{1/2}d\gamma,
$$
which is then implied by the convergence of
\begin{equation}
\int\limits_{\bold T(F)\atop{N_\varepsilon(\{\gamma'\})=\{\gamma\}}}
|\Phi_\varepsilon(\alpha\gamma',f')|\kappa_1(\{\gamma\},\{\gamma'\})|D(\gamma)|^{1/2}d\gamma,
\end{equation}
for any $\alpha\in (F^\times)^2\backslash F^\times$.

Note that $w_n=g_\varepsilon u_n=u_ng_\varepsilon$, where
\begin{eqnarray*}
g_\varepsilon&=&\text{diag}(-1,1,-1\ldots-1)\text{ if }n\text{ is odd and}\\
g_\varepsilon&=&\text{diag}(1,-1,1,\ldots,-1),\text{ when }n\text{ is even}.
\end{eqnarray*}
Then, (4.1) can be rewritten as
\begin{equation}
\int\limits_{\bold T(F)\atop{N_{\theta^*}(\{\gamma'\})=\{\gamma\}}}|\Phi_{\theta^*}
(\gamma',R_{g_\varepsilon\alpha}f')| |D_{\theta^*}(\gamma')||D(\gamma)|^{-1/2}d\gamma
\end{equation}
As $\Phi_{\theta^*}$ is a tempered distribution, we have $\Phi_{\theta^*}(\gamma',f')|D_{\theta^*}(\gamma)|^{1/2}$ is bounded on the intersection of $\bold T_{\theta^*}(F)$ with the $\theta^*$--regular elements of $GL_n(F)$.
(See 
\cite{C1,HC1}.)
Now, since Lemma 4.5.A of 
\cite{KS} gives the boundedness of $\kappa_1(\{\gamma\},\{\gamma'\})^{1/2}$, we see (4.2) must converge.

In order to resolve (1.3), we integrate first over the orbits of $N$ under $M=G'\times G$.
Note that, for $(g,h)\in M$, $d((g,h)(X,Y)(g,h)^{-1})=d(gXh^{-1},gY\varepsilon(g)^{-1})=|\det g|_F^{\langle
2\rho,\tilde\alpha\rangle} d(X,Y)$. So setting $d^*(X,Y)=d(X,Y)\cdot|\det
Y|_F^{-\langle\rho,\tilde\alpha\rangle}$ we have $d^*(gXh^{-1},gY\varepsilon(g)^{-1})=d^*(X,Y)$. As in 
\cite{GS1}, we write $d^*(X,Y)$ as the product of two measures, $d_1^*(X,Y)$ and $d_2^*(X,Y)$, with the
first giving the integral over the orbit of $(X,Y)$ in $N$, and the second integration over all such orbits. The
discussion of Section 1 shows that $d_2^*(Y^{-1}X,\varepsilon(Y))=d_2^*(X,Y)$. As $d^*(X,Y)$ and $d_2^*(X,Y)$ are
$M$--invariant, so is $d_1^*(X,Y)$.

Let $\Delta^\vee{\phantom{}}$ be the stabilizer of $(X,Y)$ under the action of $M$.
Then
$$
\Delta^\vee{}=\{(g,h)\in M|g\in G'_{\varepsilon,Y},\ Xh=gX\}.
$$
Then, direct computation, as in the proof of Lemma 2.15 shows $h\in\bold G_Z(F)$, with $Z=(-1)^n(I-X' Y^{-1} X)$.
For convenience, we abuse notation and view $\Delta^\vee{}$ as a subgroup of both $G'_{\varepsilon,Y}$ and $\bold
G_Z(F)$, by its projection onto its components.

Let $d\delta$ be a measure on $\Delta^\vee{}$, and fix measures $dg,dh$ on $GL_n(F)$ and
$\bold G(F)$ so that $d_1^*(X,Y)$ is the quotient of $dg dh$ by $d\delta$.
Now consider the map $(g,h)\mapsto (gXh^{-1}, gY\varepsilon(g)^{-1})$,
from
$M$ to the orbit of $(X,Y)$. We change the orbit representative, to
$(Y^{-1}X,\varepsilon(Y))$, which has the effect of changing $(g,h)$ to
$(g Y^{-1},h)$. Therefore, $dg$ and $dh$ remain unchanged by this
transformation. Also, acting by $(Y^{-1},I)$ on $(g,h)$ conjugates the
stabilizer $\Delta^\vee{}$ of $(X,Y)$ to $(Y,I)\Delta^\vee{} (Y^{-1},I)$, and thus
this change of variables leaves $d\delta$ unchanged. Now
$d_1^*(Y^{-1}X,\varepsilon(Y))=d_1^*(X,Y)$, and thus
$d^*(X,Y)=d^*(Y^{-1}X,\varepsilon(Y))$. Now we consider (1.3) and make
the above change of variables,
\begin{equation}
\int\limits_{(X,Y)}\psi_{\tau'}(\varepsilon(Y))f_\tau((-1)^n (I-X'Y^{-1}X))|\det
Y|_F^{-s}\xi_L(Y^{-1})\xi_{L'}(Y^{-1}X)d^*(X,Y)\end{equation}
$$=\int\limits_{(X,Y)}\psi_{\tau'}(Y)f_\tau((-1)^n(I-X'Y^{-1}X))\xi_L(\varepsilon(Y)^{-1})\xi_{L'}(X)|\det
Y|^s d^*(X,Y).$$
Let $\omega'$ be the central character of $\tau'$.
Then $\omega'$ is quadratic, and we may choose $f'\in C_c^\infty(G')$ so that
$$
\psi_{\tau'}(g')=\int\limits_{Z(G')}f'(zg')\omega'(z^{-1})d^\times z.
$$
With this substitution, (4.3) becomes
\begin{equation}
\int\limits_{F^\times}\int\limits_{(X,Y)} f'(zY)\omega'(z)f_\tau((-1)^n(I-X' Y^{-1} X))|\det Y|^s\xi_L
(\varepsilon(Y^{-1}))\xi_{L'}(X)d^*(X,Y)d^\times z.
\end{equation}
Now we consider the map from the $M$--orbit of $(X,Y)$ to $G'/\Delta^\vee{}\times\Delta^\vee{}\,\backslash G$, given
by
$$
(gXh^{-1},\ gY\varepsilon(g)^{-1})\mapsto g\Delta'\times\Delta^\vee{} h.
$$
Then the fibers of this map are homeomorphic to $X\Delta^\vee{}$. The contribution from the $M$--orbit of
$(X,Y)$ to (4.4) is then
\begin{eqnarray*}
&&\tilde\psi(s,Z)=\sum_{\alpha\in(F^\times)^2\backslash F^\times}\omega'(\alpha)\int\limits_{g\in
G'/\Delta^\vee{}}\int\limits_{h\in\Delta^\vee{}\backslash G}\int\limits_{X\Delta^\vee{}}f'(\alpha g
Y\varepsilon(g)^{-1})f_\tau(h^{-1}Zh)\\ &&\cdot|\det(g Y\varepsilon(g)^{-1})|^s\, dg \, dh d(X
h_0)\int\limits_{Z(G')}\xi_{\tilde L}(z^{-2} g Y\varepsilon(g)^{-1})\xi_{L'}(z^{-1}gX h_0 h)|\det z|^{-2s}d^\times z,
\end{eqnarray*}
where $\tilde L=\varepsilon(L)$.
Note that here (as in \cite{GS1}) we have suppressed the dependence of $\tilde\psi$ on the parameters $X,Y,f',f_\tau,L$, and $L'$.
Considering $\Delta^\vee{}$ as a subgroup of $G'_{\varepsilon,Y}$ and $G_Z$ (see above), we get
\begin{eqnarray}
&&\tilde\psi(s,Z)=\sum_{\alpha\in(F^\times)^2\backslash F^\times}
\omega'(\alpha)\int\limits_{G'/G'_{\varepsilon,Y}}
\int\limits_{G_Z\backslash G}
\int\limits_{g_0\in G'_{\varepsilon,Y}/\Delta^\vee{}}\int\limits_{X G_Z}\cdot\nonumber\\
&&f'(\alpha g Y\varepsilon(g)^{-1})f_\tau(h^{-1}Zh)|\det
(g Y\varepsilon(g)^{-1})|^s\, dg\, dh dg_0\, dXh_0\,\cdot\nonumber\\
&&\int\limits_{Z(G')}\xi_{\tilde L}(z^{-2} g Y\varepsilon(g)^{-1})
\xi_{L'}(z^{-1}g g_0 X h_0 h)|\det z|^{-2s}d^\times z.
\end{eqnarray}
Let $\displaystyle{\Phi_{s,\varepsilon}(\alpha Y,f')=\int\limits_{G'/G'_{\varepsilon,Y}} f'(\alpha g Y\varepsilon (g)^{-1})|\det (g
Y\varepsilon(g)^{-1}|^s dg}$. Certainly, $\displaystyle{\lim\limits_{s\to 0}\Phi_{s,\varepsilon}(\alpha Y,f')=\Phi_\varepsilon(\alpha
Y,f')=\int\limits_{G'/G'_{\varepsilon,Y}} f'(\alpha g Y\varepsilon(g)^{-1})dg}$ is a twisted orbital integral.

Suppose $n=2m+1$, and $(Y,I)$ is a solution to (1.1).
Then $Z=-(I-I' Y)=-(I+Y^{-1})$.
Note that, if $g\in G'_{\varepsilon,Y}$, then, as $Y+\tilde\varepsilon(Y)=-I$, we have
$-I=g(Y+\tilde\varepsilon(Y))\varepsilon(g)^{-1}=-g\varepsilon(g)^{-1}$, so $g\in G$. Thus, $G'_{\varepsilon,Y}=\bold
G_Z(F)\simeq\Delta^\vee{}$. We then set
\begin{eqnarray}
\psi(s,z)=\sum_\alpha \omega'(\alpha)\int\limits_{G'/G'_{\varepsilon,Y}}
\int\limits_{G_Z\backslash G}\int\limits_{G_Z}f'(\alpha g Y\varepsilon(g)^{-1})f_\tau (h^{-1} Zh)|\det(g
Y\varepsilon(g))|^s\cdot\nonumber\\
\bigg(\int\limits_{Z(G')}\xi_{\tilde L}(z^{-2}g Y\varepsilon(g)^{-1})
\xi_{L'}(z^{-1}g h_0 h)|\det z|^{-2s}d^\times z\bigg)\, dg\, dh\, dh_0.
\end{eqnarray}
In this case, $\psi(s,z)=\tilde\psi(s,Z)$.

\begin{lemma}Let $n=2m+1$.
Assume $Y$ is $\varepsilon$--regular.
Then $\tilde\psi(s,Z)$ converges absolutely for Re $s>0$.
Further, suppose $(Y,I)$ satisfies (2.1), i.e., for almost all twisted conjugacy classes in $\sN$.
Then $\psi(s,Z)$ also converges absolutely for Re $s>0$, and there is a function $E_Z(s)=E(s,z,Y,f_\tau,f',L,L')$,
which as a function of $s$ is entire, such that $\psi(s,Z)=E_Z(s)$ if $Z$ is regular, non--elliptic, and
\begin{eqnarray*}
\psi(s,Z)=E_Z(s)+\qquad\qquad\qquad\qquad\qquad\qquad\qquad\qquad\qquad\\
\sum_{\alpha\in(F^\times)^2\backslash
F^\times}\omega'(\alpha)\cdot\Phi_{\varepsilon,s}(\alpha Y,f')
\Phi(Z,f_\tau)\mu(\bold G_Z(F)) q^{b(Y,Z)^s} L(1,2ns)
\end{eqnarray*}
for Re $s>0$ if $Z$ is regular elliptic.
Here, $b(Y,Z)$ is an integer depending on $f_\tau,f',L,L'$, as well as $Y$ and $Z$.
In particular,
$$
\Res_{s=0}\psi(s,Z)=(2n\log q)^{-1}\sum_\alpha\omega'(\alpha)\Phi_{\varepsilon}(\alpha
Y,f')\Phi(Z,f_\tau)\mu(\bold G_Z(F)),
$$
if $Z$ is regular elliptic, and $\Res\limits_{s=0}\psi(s,Z)=0$ if $Z$ is regular but non--elliptic.
\end{lemma}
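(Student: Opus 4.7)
The plan is to adapt the proof of Lemma 4.5 of \cite{GS1}, together with its analogues in \cite{Sh3, GS2}, to the present twisted situation with $n=2m+1$. First, for the absolute convergence of $\tilde\psi(s,Z)$ on Re $s > 0$, I factor the integrand of (4.5) into three pieces: a twisted orbital integral on $G'$ weighted by $|\det(gY\varepsilon(g)^{-1})|^s$, an orbital integral of $f_\tau$ at $Z$, and an integration over the compact locus cut out by the characteristic functions $\xi_{\tilde L}$ and $\xi_{L'}$ together with the $Z(G')$-variable. Since $Y$ is $\varepsilon$-regular, $|D_{\theta^*}(Y)|^{1/2}\Phi_{\varepsilon,s}(\alpha Y, f')$ is bounded by the twisted analogue of Theorem 14 of \cite{HC1} (cf.~\cite{C1}); combined with the boundedness of $\kappa_1$ from Lemma 4.5.A of \cite{KS} and the compactness of the remaining integrations, this yields convergence for Re $s > 0$.

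When $(Y,I)$ itself satisfies (1.1), the computation preceding (4.6) shows $G'_{\varepsilon,Y}=\uG_Z(F)\simeq\Delta^{\vee}$, so that $Y+\tilde\varepsilon(Y)=-I$ forces any twisted centralizer element to land in $\uG$. Consequently the $g_0$-integration in (4.5) collapses and (4.5) reduces to (4.6). Hence $\psi(s,Z)=\tilde\psi(s,Z)$ and absolute convergence follows from the first paragraph.

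To produce the decomposition $\psi(s,Z)=E_Z(s)+(\text{main term})$, I would carry out a Tate-type analysis on the $d^\times z$ integral, parallel to the corresponding step of \cite{GS1}. Identifying $Z(G')\simeq F^\times$ and decomposing by valuation, the joint condition $\xi_{\tilde L}(z^{-2}gY\varepsilon(g)^{-1})\xi_{L'}(z^{-1}gXh_0h)\neq 0$ confines $z$ to an arithmetic progression bounded below by an integer $b(Y,Z)$ depending on $Y,Z,f_\tau,f',L,L'$. On this tail the $z$-integral becomes a geometric series in $q^{-2ns}$ summing to $q^{b(Y,Z)s}L(1,2ns)$, and the factored outer integrals produce precisely $\Phi_{\varepsilon,s}(\alpha Y,f')\Phi(Z,f_\tau)\mu(\uG_Z(F))$; the finitely many truncated shells give an entire-in-$s$ remainder, which is absorbed into $E_Z(s)$.

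For regular non-elliptic $Z$, Harish-Chandra's vanishing theorem for orbital integrals of supercuspidal matrix coefficients gives $\Phi(Z,f_\tau)=0$, so the main term drops out and $\psi(s,Z)=E_Z(s)$; for regular elliptic $Z$ the displayed formula holds, and the residue at $s=0$ then follows from $\Res_{s=0}L(1,2ns)=(2n\log q)^{-1}$ combined with $q^{b(Y,Z)s}|_{s=0}=1$ and the continuity $\Phi_{\varepsilon,s}(\alpha Y,f')\to\Phi_\varepsilon(\alpha Y,f')$ at $s=0$. The main technical obstacle will be the Tate-type bookkeeping: pinning down the exact arithmetic progression in $z$ from the combinatorics of $\xi_{\tilde L},\xi_{L'}$ and verifying that the truncated contribution is genuinely entire (rather than hiding spurious poles arising from the support structure of $f'$). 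This is where the bulk of the delicate work in the analogous lemmas of \cite{GS1, GS2} is concentrated, and the argument should transfer here once the norm correspondence of Section 2 is used to convert between $\uG_Z$- and $G'_{\varepsilon,Y}$-integrals.
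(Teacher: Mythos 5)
Your proposal follows essentially the same route as the paper's proof (which itself defers to Lemma 4.5 of \cite{GS1}): reduce to the case $X=I$ so that $G'_{\varepsilon,Y}=\uG_Z(F)$ and $\psi=\tilde\psi$, extract the main term from the tail of the $d^\times z$ integral as a geometric series equal to $q^{b(Y,Z)s}L(1,2ns)$ times $\Phi_{\varepsilon,s}(\alpha Y,f')\Phi(Z,f_\tau)\mu(\uG_Z(F))$ with an entire remainder from the finitely many truncated shells, dispose of the non-elliptic case by vanishing of orbital integrals of supercuspidal matrix coefficients, and read the residue off the local $L$-factor. The only imprecision is in your first paragraph: for fixed regular $Y$ and $Z$ the $g$- and $h$-integrations are compactly supported automatically, and the genuine convergence issue is the coupled non-compact $(z,h_0)$ integration (with $h_0$ ranging over the possibly non-compact $G_Z$), which is exactly what your third paragraph's Tate-type bookkeeping, like the paper's diagonalization of $h_0$ and the bounds $|za_i|\geq\kappa$, addresses.
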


\begin{proof}(This is as in \cite{GS1}).
We prove the lemma when $X=I$.
The convergence of $\tilde\psi(s,Z)$ will follow from this.
Recall that we have fixed $L$ and $L'$ to be basic neighborhoods of 0.
Since $f'(\alpha g Y\varepsilon(g)^{-1})\xi_{\tilde L}(z^{-2}gY\varepsilon(g)^{-1})=0$ unless $g Y\varepsilon(g)^{-1}\in
z^2\tilde L\cap\alpha^{-1}\text{ supp}(f')$ for some $\alpha$, the argument used in 
\cite{Sh2, Sh3, G1,G2}
shows that $|\det z|$ is bounded below, and the bound
depends only on $L$ and $f'$. Now the semi-simplicity (respectively $\varepsilon$--semi-simplicity) of $Z$ (resp.~$Y$)
implies the integrand in (4.6) vanishes for $g$ and $h$ outside compact sets $S(g)\subseteq G'/G'_{\varepsilon,Y}$ and
$S(h)\subseteq G_Z\backslash G$. Then $S(g)$ and $S(h)$ depend on $Y,Z,f_\tau$ and $f'$.

We may consider only $h_0$ (in the integrand of (4.6)) within the connected component of $G_Z$, and since $Z$ is
semisimple, we may assume $h_0=(a_1,a_2,\ldots,a_m,1,a_m^{-1},\ldots,a_1^{-1})$ is diagonal.

If $\xi_{L'}(z^{-1}ghh_0h)\not= 0$, then, by the above observations, $z^{-1}h_0$ is contained in a compact set in
$M_n(F)$. Thus, for some $\kappa$, $|z^{-1}a_i|\leq\kappa,\ |z^{-1}a_i^{-1}|\leq\kappa$ and $|z^{-1}|\leq\kappa$, which
we rewrite as $|z|\geq\kappa$, $|z a_i|,\ |z a_i^{-1}|\geq\kappa$. Let $T$ be the compact part of $G_Z$.
Then, by compactness of $S(g),\ S(h)$ we know there is some $\kappa_1,$ such that if $|z^{-1}|,\ |z^{-1}a_i^{-1}|,\
|z^{-1}a_i|\leq\kappa_1$, then $S(g)z^{-1}h_0 TS(h)\subseteq L'$. Clearly, $\kappa_1\geq\kappa$.
So now, fixing such a $z$, we may integrate first over $T$ in the first integral of (4.6), and thus the integral over
$G_Z$ is replaced by $\Phi(Z,f_\tau)$. Note, as $\tau$ is supercuspidal, $\Phi(Z,f_\tau)$ vanishes unless the split
component of $G_Z$ is trivial, i.e., unless $G_Z$ is compact.

Let $\eta$ be the lower bound on $|z|$ given by $f'$ and $L$.
If $\kappa>\eta$ we use $\kappa$ instead of $\eta$.
As $\kappa_1\geq\kappa$, and $|\ \ |$ is discrete, we may assume $\kappa_1/\kappa=q^{-k}$ for some integer $k\geq
0$. Then $|\varpi^{-k}z a_i|\geq\kappa_1$, $|\varpi^{-k}z a_i^{-1}|\geq\kappa$, and $|\varpi^{-k}z|\geq\eta
\kappa_1/\kappa\geq\eta$. Thus, for all $z$ with $|z|\geq\eta\kappa_1/\kappa$, both $\xi_{\tilde L}$ and $\xi_{L'}$ have
value 1, and we can then integrate over all $z=\beta\varpi^{-k}$, with $\beta\in\frak o^\times$ and over all $G_Z$ we
get
\begin{equation}
\sum_{\alpha\in (F^\times)^2\backslash F^\times}\omega'(\alpha)\Phi(Z,f_\tau)\Phi_{\varepsilon,s}(\alpha Y,f')\cdot
\sum_{\kappa\geq\ell}q^{-2 kns}\mu(\kappa)
\end{equation}
where $|\varpi^{-\ell}|=\eta\kappa_1/\kappa$, and
$$
\mu(\kappa)=\mu(T)\prod_i\int\limits_{q^{-(k-d)}\leq|a_i|\leq q^{(k-d)}}d^\times a_i,
$$
with $\mu(T)$ the measure of the compact part $T$ of $G_Z$, with $d$ given by $q^d=\kappa_1$.
As the series converges for Re $s>>0$, so does (4.7).
If $G_Z$ is non--compact (i.e. $Z$ is non--elliptic).
Then $\Phi(Z,f_\tau)=0$, and hence (4.7) vanishes.
All that remains is an integral over $\eta{\kappa_1/\kappa}\geq |z|\geq\eta$, which gives an entire function
$E_Z(s)$.
\end{proof}

\begin{corollary}If $n=2m$, and $Y$ is strongly $\varepsilon$--regular, the statement of Lemma 4.5 holds.
\end{corollary}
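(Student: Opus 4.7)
The plan is to reduce the $n=2m$ case to the setting of Lemma 4.5 (where $n=2m+1$) by means of the embedding $\varphi\colon SO_{2m}\hookrightarrow SO_{2m+1}$ introduced in Section 3, together with the analogous reduction used in \cite{GS1} for the even orthogonal case. The overall strategy is that the integral $\tilde\psi(s,Z)$ is built from three pieces: (i) the twisted orbital integral $\Phi_{\varepsilon,s}(\alpha Y,f')$ on $GL_n$, (ii) the orbital integral $\Phi(Z,f_\tau)$ on $G(m)=SO_{2m+1}$, and (iii) the central integral against $\xi_{\tilde L}$ and $\xi_{L'}$. None of these pieces cares directly about the parity of $n$; what changes is merely the shape of the solutions $(X,Y)$ to (1.1) and the normalization of $Z=(-1)^n(I-X'Y^{-1}X)$.

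First I would normalize the orbit representative. By Lemma 3.1, for any semisimple $h\in G(m)$ we can choose a solution $(X,Y)$ to (1.1) with $X=[X_1\ 0\ X_2]$ and $Y_0=\begin{pmatrix}Y_{11}&Y_{12}\\ Y_{21}&Y_{22}\end{pmatrix}\in GL_{2m}(F)$ so that $\varphi(I-X_0'Y_0^{-1}X_0)=I-X'Y^{-1}X=h$. Thus the twisted centralizer $G'_{\varepsilon,Y}$, the stabilizer $\Delta^{\vee}$, and the identity $(-1)^n(I+Y^{-1})$-style relation $Y+\tilde\varepsilon(Y)=XX'$ all descend, via $\varphi$, to the corresponding objects for the $SO_{2m}$ analysis of Section 2. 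In particular, for almost all strongly $\varepsilon$-regular $Y$ we may take $X_0=I_{2m}$ (cf.\ Corollary 4.2 and its proof), in which case $\varepsilon$-semisimplicity of $Y$ again forces $G'_{\varepsilon,Y}\simeq \bold{G}_Z(F)\simeq \Delta^{\vee}$, so that $\psi(s,Z)$ is well defined and equals $\tilde\psi(s,Z)$ exactly as in the odd case.

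Next I would rerun the convergence argument of Lemma 4.5 verbatim. The key inputs used there were: (a) $\varepsilon$-regularity of $Y$ and semisimplicity of $Z$ (giving compact sets $S(g)\subseteq G'/G'_{\varepsilon,Y}$ and $S(h)\subseteq G_Z\backslash G$ outside of which the integrand vanishes); (b) the reduction of the $h_0$-integral to a diagonal piece, using that $Z$ is semisimple; (c) supercuspidality of $\tau$, which forces $\Phi(Z,f_\tau)=0$ unless $G_Z$ is compact. All three hold in the present setting without modification. The only genuinely new bookkeeping is that the exponent $|\det z|^{-2s}$ now sums as $\sum q^{-2kns}$ with $n=2m$, which converges for $\operatorname{Re}s>0$ and contributes the factor $L(1,2ns)$ in exactly the same way.

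The main obstacle, such as it is, is verifying that the explicit factorization $(X,Y)\mapsto (gXh^{-1},gY\varepsilon(g)^{-1})$ of Section 4 descends cleanly through $\varphi$: one must check that the stabilizer $\Delta^{\vee}$ projected into $\bold{G}_Z(F)$ under the $SO_{2m+1}$-embedding is still the $\varphi$-image of the corresponding $SO_{2m}$-stabilizer, so that the measure-theoretic decomposition $d^{*}(X,Y)=d_{1}^{*}(X,Y)\,d_{2}^{*}(X,Y)$ is compatible with the one used in Lemma 4.5. This is essentially the content of Corollary 3.3(b) together with Lemma 2.15 transported by $\varphi$, and once it is in place the proof of Lemma 4.5 applies line by line, yielding both the entire function $E_{Z}(s)$ and the residue formula
\[
\operatornamewithlimits{Res}_{s=0}\psi(s,Z)=(2n\log q)^{-1}\sum_{\alpha}\omega'(\alpha)\,\Phi_{\varepsilon}(\alpha Y,f')\,\Phi(Z,f_\tau)\,\mu(\bold{G}_Z(F))
\]
for regular elliptic $Z$, and $\operatornamewithlimits{Res}_{s=0}\psi(s,Z)=0$ in the regular non-elliptic case.
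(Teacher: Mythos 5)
Your overall strategy --- reduce to the $SO_{2m}$ picture through the embedding $\varphi$ and then rerun the proof of Lemma 4.5 with $X$ the image of the identity-type solution --- is the same as the paper's. But the one step that is genuinely new in the even case, and which you yourself flag as ``the main obstacle,'' is not actually closed by the lemmas you cite. The issue is the identification $G'_{\varepsilon,Y}\simeq \uG_Z(F)$. In the odd case this came for free: $Y+\tilde\varepsilon(Y)=-I$ forces $g\varepsilon(g)^{-1}=I$ for every $g\in G'_{\varepsilon,Y}$, so the twisted centralizer already sits inside $SO_{2m+1}(F)$ and is literally $\uG_Z(F)$. When $n=2m$ the analogous computation only places $G'_{\varepsilon,Y}$ inside a copy of $SO_{2m}(F)$, whose image under $\varphi$ is a \emph{proper} subgroup of $G(m)=SO_{2m+1}(F)$; a priori $\uG_Z(F)$ could contain elements not of the form $\varphi(g)$, which would wreck the identification $\psi(s,Z)=\tilde\psi(s,Z)$ and the subsequent $h_0$-integration over $G_Z$. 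Neither Corollary 3.3(b) (which concerns semisimple parts of classes in the image of $N_\varepsilon$) nor Lemma 2.15 (which only treats those $h\in G_Z(F)$ for which some $g$ with $gX=Xh$ is already assumed to exist) rules this out.

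This is exactly where the hypothesis of \emph{strong} $\varepsilon$-regularity enters, and your argument never uses it --- you appeal only to $\varepsilon$-semisimplicity and say the odd-case argument repeats ``exactly.'' The paper's resolution is a rank count: $G'_{\varepsilon,Y}$ equals the centralizer $\uH_{Z_0}(F)$ of $Z_0=I-X_0'Y^{-1}X_0$ in $\uH=SO_{2m}$, hence is a torus; $\varphi$ carries it into $\uG_Z(F)$; and since $Y$ strongly $\varepsilon$-regular makes $Z$ strongly regular, $\uG_Z(F)$ is itself a torus, of the same rank as $\varphi(G'_{\varepsilon,Y})$, so the containment of tori of equal rank is an equality. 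Once $\uG_Z(F)=\varphi(G'_{\varepsilon,Y})$ is in hand, the remainder of your write-up (convergence, the entire part $E_Z(s)$, and the residue formula) does go through as you describe.
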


\begin{proof}We only need to note the argument of the lemma holds for $X=\bmatrix I_n&0&I_n\endbmatrix$.
If $Y$ is $\varepsilon$--regular, then $G'_{\varepsilon,Y}$ is a torus in $SO_{2m}(F)$.
Let $Z_0$ be the element $I-X'_0 Y^{-1} X_0$ of $SO_{2m}(F)$ given by Lemma 4.5 of \cite{GS1}.
Then $G'_{\varepsilon,Y}=\uH_{Z_0}(F)$, with $\uH=SO_{2m}$.
Now, under the map $\varphi\colon h\mapsto\begin{pmatrix} h_{11}&0&h_{12}\\ 0&1&0\\ h_{21}&0&h_{22}\end{pmatrix}$, we see
that if $g\in G'_{\varepsilon,Y}(F)$, then $\varphi(g)\in \uG_{\varphi(Z_0)}(F)=\uG_Z(F)$, in this case. Thus
$\uG_Z(F)$ contains the torus $\varphi(G'_{\varepsilon,Y})$. But as $Y$ is strongly $\varepsilon$--regular, $Z$ is
strongly regular, and hence $\uG_Z(F)$ is also a torus. As $\varphi(G'_{\varepsilon,Y})$ is a torus of the
appropriate rank, $\uG_Z(F)=\varphi(G'_{\varepsilon,Y})$. Now, the argument of Lemma 4.5 holds verbatim.
\end{proof}

\begin{corollary}Let $\uT$ be a Cartan subgroup of $\uG$.
Denote by $T'$ the subset of regular elements of $T=\uT(F)$.
Let $\Omega$ be a compact set of $T'$.
Then given $f_\tau,f',L$, and $L',\ b=b(Y,Z)$ and $E_Z$ can be chosen independently of $Y$ and $Z$ for all $Z$ in $\Omega$.
\end{corollary}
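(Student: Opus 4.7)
The plan is to exploit the compactness of $\Omega$ together with the continuity properties of the norm correspondence to make all the bounds appearing in the proof of Lemma 4.5 uniform in $Y$ and $Z$.

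First, I would use continuity of $N_\varepsilon^{-1}$ on semisimple classes (Lemma 2.12 when $n=2m+1$, composed with $\varphi$ via Lemma 3.4(b) when $n=2m$) to conclude that the set of $\varepsilon$--conjugacy classes $\{Y^{-1}\}$ attached by the canonical section of Definition 2.3 (or its analog in Section 3) to classes $\{Z\}$ with $Z\in\Omega$ lies in a compact family of $\varepsilon$--regular classes. In particular, representatives $Y$ for these classes can be chosen to lie in a fixed compact subset of the ambient $\varepsilon$--stable Cartan of $GL_n(F)$ corresponding to $\uT$.

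Second, I would revisit the construction in the proof of Lemma 4.5. The compact sets $S(g)\subseteq G'/G'_{\varepsilon,Y}$ and $S(h)\subseteq G_Z\backslash G$ outside of which the integrand of (4.6) vanishes depend on $Y,Z,f_\tau,f'$; since $f_\tau,f'$ are fixed and the parameters $Y,Z$ now vary in compact families, these sets can be chosen uniformly for all $Y,Z$ in question. Likewise, the element $h_0=\diag(a_1,\ldots,a_m,1,a_m^{-1},\ldots,a_1^{-1})\in \bold G_Z(F)$ is constrained by the condition that $z^{-1}h_0$ sit in a fixed compact subset of $M_n(F)$, and since $Z\in\Omega$ forces the $a_i$ into a fixed compact subset of $F^\times$, the estimates on $|za_i|$ and $|za_i^{-1}|$ are uniform. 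Combining these observations, the constants $\kappa,\kappa_1,\eta$ introduced in the proof of Lemma 4.5 can be chosen independently of $Y$ and $Z$.

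Third, since $b(Y,Z)=-2\ell n$ where $|\varpi^{-\ell}|=\eta\kappa_1/\kappa$, the uniformity of $\kappa,\kappa_1,\eta$ immediately yields a uniform value of $b$. The entire function $E_Z(s)$ arises from integration over the region $\eta\kappa_1/\kappa\geq|z|\geq\eta$ against an integrand built from $f',f_\tau,\xi_{\tilde L},\xi_{L'}$ and the orbital data of $Y,Z$; with uniform bounds on all these parameters, the same expression defines an entire function $E_Z(s)$ that works for every $Z\in\Omega$. The main obstacle I anticipate is verifying that $S(g)$ can actually be chosen uniformly in $Y$, since the denominator $G'_{\varepsilon,Y}$ in the quotient varies with $Y$. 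One handles this by restricting to a compact family of strongly $\varepsilon$--regular $Y$ lying in a single $\varepsilon$--stable Cartan, so that the twisted centralizers form a locally constant family and the quotients admit a common compact transversal; this is the step with the most technical content and mirrors the corresponding uniformity arguments of \cite{GS1} and \cite{GS2}.
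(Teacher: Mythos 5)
Your proposal is correct and follows essentially the same route as the paper: the entire content of the paper's proof is the observation that the compact sets $S(g)$ and $S(h)$ from the proof of Lemma 4.5 can be chosen independently of the class in $\Omega$, which the authors obtain in one line by citing the corollary to Lemma 19 of \cite{HC1} (Harish--Chandra's compactness lemma), whereas you reconstruct that uniformity by hand, including its twisted analogue for $S(g)$. One small inaccuracy: the $a_i$ are coordinates of the integration variable $h_0\in G_Z$ and are not confined to a compact set merely because $Z\in\Omega$; the uniform bounds on $|z a_i|$ and $|z a_i^{-1}|$ come instead from the uniformity of $S(g)$, $S(h)$ and the fixed $L'$, so this slip does not affect the conclusion.
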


\begin{proof}The corollary to Lemma 19 of 
\cite{HC1} implies that the compact sets $S(g)$ and $S(h)$ in the proof of Lemma 4.5 can be chosen
independently of
$z$ in $\Omega.$
\end{proof}

In calculating of the residue we now integrate over all $M$--orbits in $N$.
We accomplish this by integrating over all $\varepsilon$--regular $\varepsilon$--conjugacy classes in $\sN$.
We first assume that $n=2m$ or $n=2m+1$.
We then must integrate $\tilde\psi(s,Z)$ under the $M$--orbits in $N$.
We have shown that almost all such orbits are parameterized by $\varepsilon$--regular conjugacy classes in $\sN$.
Thus, by removing a set of measure zero from these orbits, we may integrate $\psi(s,Z)$ over $\varepsilon$--regular $\varepsilon$--conjugacy classes in $\sN$.
Then, by Proposition 4.1, $Z=N_\varepsilon(\{Y^{-1}\})$ is regular and semisimple in $\uG(F)$.

Let $\{\uT_i\}$ be a complete set of representatives for the conjugacy classes of Cartan subgroups of $\uG$ defined over $F$.
We now must integrate over $\sN$, but we may instead use Propositions 4.1, 3.1, and 2.7 to integrate over $\bigcup_i T_i$, using the measures
$$
|W(\bold T_i)|^{-1}\kappa_1(\{\gamma_i\},\{\gamma'_i\})|D(\gamma_i)|
d\gamma_i=|W(\bold T_i)|^{-1}|D_{\theta^*}(\gamma'_i)|d\gamma_i.
$$
Now, suppose $n>2m+1$.
By the proof of Corollary 4.2 we may, for almost all $\{Y\}\in\sN$, choose a representative $\diag(J_1,g,J_2)$, with $g\in GL_{2m+1}(F)$ either $\varepsilon$--regular, or the image under
$\varphi$ of an $\varepsilon$--regular element. Further, $\diag(J_1,J_2)$ is $\tilde\varepsilon$--skew symmetric.
We may also assume
$$
X=\begin{pmatrix} 0_{j\times 2m+1}\\
I_{2m+1}\\
0_{j\times 2m+1}\end{pmatrix},
$$
with $j={n-(2m+1)\over 2}$, if $n$ is odd, and $X$ has a similar form with $I_{2m+1}$ replaced by $\bmatrix I_m&0&I_m\endbmatrix$ for $n$ even.
In this case, outside of a set of measure zero, the classes in $\sN$ form a fiber bundle with finite fibers coming from the twisted conjugacy classes of possible choices of diag $(J_1,J_2)$.
The base of the fiber bundle is parameterized by $\varepsilon$--conjugacy classes $\{Y^{-1}\}$ in $GL_{2m+1}(F)$ such that $Y$ is the $GL_{2m+1}(F)$--component of an $F$--rational solution to (1.1) when $GL_{2m+1}\times G(m)$ is considered as a Levi subgroup of
$G(3m+1)$. We now may use $\theta^*$--stable Cartan subgroups of $GL_{2m+1}$ and their $F$--isomorphism with the members
of $\{\uT_i\}$ as in the case $n=2m+1.$ Here $\{\uT_i\}$
is a complete set of representatives for the conjugacy classes of Cartan subgroups of $\uG$. We then obtain measures
$\kappa_1(\gamma_i,\gamma'_i)|D(\gamma_i)|d\gamma_i$ on the tori
$T_i$, and since we know
${\sC}_{ss}$ (or at least $\sC_{ss}^{reg}$) is in the image of $\sN_\varepsilon$, we can integrate over $\bigcup_i T_i$
for each fiber. The integral over the complete fiber bundle is then given by means of the image correspondence $\sA$,
defined precisely as in the cases $n=2m,2m+1$, while still integrating over $\bigcup_i T_i$. Then with the choices we
have made we can write
\begin{eqnarray*}
\tilde\psi(s,Z)=&\displaystyle{\sum_\alpha\omega'(\alpha)
\int\limits_{g\in G'/G'_{\varepsilon,Y}}
\int\limits_{h\in G_Z\backslash G}
\int\limits_{h_0 \in G_Z} f'(\alpha g Y\varepsilon(g)^{-1})
f_\tau(h^{-1} Zh)}\\
&\cdot\det (g Y\varepsilon(g)^{-1})|^s\, dg\, dh\, dh_0
\int\limits_{Z(G)}\xi_{\tilde L}(z^{-2} g Y\varepsilon(g)^{-1})\xi_{L'}
(z^{-1} g X h_0 h)\\
&\cdot\det |z|^{-2s} d^\times z.
\end{eqnarray*}
Lemma 4.5 is now valid for $n>2m+1$, so we may now assume $n\geq 2m$.

For each $G$--conjugacy class of Cartan subgroups of $\uG$, choose a representative $\uT_i$, and let $T'_i$ be the set of regular elements in $T_i=\uT_i(F)$.
For $\gamma\in T'_i$ let
$$
\psi_{\sA}(s,\gamma)=\sum_{\{Y^{-1}\}\in\sA(\{\gamma\}) }\psi(s,\gamma_Y),
$$
where $\{\gamma_Y\}=\{(-1)^n(I-X' Y^{-1} X)\}$ plays the role of $Z$, depending on $Y$.

Setting
$$
R(s,Z)=(2n\log q_Fs)^{-1}\sum_\alpha\omega'(\alpha)\Phi_{\varepsilon,s}
(\alpha Y,f')\Phi(Z,f_\tau)\,\mu(\bold G_Z(F)),
$$
then,  by Lemma 4.5, $\varphi(s,Z)=\psi(s,Z)-R(s,Z)$ is an entire function.
Now set 
$$\varphi_{\sA}(s,\gamma)=\sum_{\{Y\}\in\sA(\{\gamma\})}\varphi(s,\gamma_Y).$$
Also set 
$$R_{\sA}(s,\gamma)=\sum\limits_{\{Y\}\in\sA(\{\gamma\}) }R(s,\gamma_Y).$$
Then
\begin{eqnarray*}
&&\sum_i |W(\bold T_i)|^{-1}\int\limits_{T'_i}(\psi_{\sA}(s,\gamma)|D(\gamma)|-R_{\sA}
(s,\gamma)|D(\gamma)|)d\gamma\\
&&=\sum_i |W(\bold T_i)|^{-1}\int\limits_{T'_i}\varphi_{\sA}(s,\gamma)|D(\gamma)|d\gamma.
\end{eqnarray*}
For each $i$, let $\Omega_i$ be an open compact subset of $T'_i$.
Then, using Corollary 4.6, we have
$$
\int\limits_{\Omega_i}\psi_{\sA}(s,\gamma)=h_i(s)+c_s\int\limits_{\Omega_i}
\Phi_{\varepsilon,s}(\sA(\{\gamma\}),f')\Phi(\gamma,f_\tau)|D(\gamma)|d\gamma,
$$
Where $c_s=q^{bs} L(1,2ns)$ when $\bold T_i$ is elliptic.
Here $h_i(s)$ is entire.
In the case where $\bold T_i$ is non--elliptic, then Lemma 4.5 implies the integral is entire.

Thus 
\begin{eqnarray*}
&&\Res_{s=0} \sum\limits_i |W(\bold T_i)|^{-1}\int\limits_{\Omega_i}\psi_{\sA}
(s,\gamma) |D(\gamma)|d\gamma=\\
&&c\sum_i |W(\bold T_i)|^{-1}\int\limits_{\Omega_i}\Phi_\varepsilon(\sA(\{\gamma\},f')\Phi(\gamma, f_\tau) |D(\gamma)|
d\gamma+\\&&\sum_i |W(\bold T_i)|^{-1}\Res_{s=0}\int\limits_{\Omega_i}\varphi_{\sA}(s,\gamma)
|D(\gamma)|d\gamma\\
&&=c\sum_i |W(\bold T_i)|^{-1}\int\limits_{\Omega_i}\Phi_\varepsilon(\sA(\{\gamma\},f')
\Phi(\gamma,f_\tau)|D(\gamma)|d\gamma,
\end{eqnarray*}
as $\varphi_{\sA}(s,\gamma)$ is entire.
Here $c=(2n\log q)^{-1}$.
Now letting $\Omega_i\to T'_i$, we have
\begin{eqnarray*}
&& \Res_{s=0} \sum_i|W(\bold T_i)|^{-1}\int\limits_{T'_i}\psi_{\sA}(s,\gamma)|D(\gamma)|d\gamma=\\
&&\sum_i|W(\bold T_i)|^{-1}\lim_{\Omega_i\to T'_i}\Res_{s=0}
\int\limits_{\Omega_i}\psi_{\sA}(s,\gamma)|D(\gamma)|d\gamma+\\
&&\sum_i |W(\bold T_i)|^{-1}\lim_{\Omega_i\to T_i}\Res_{s=0}
\int\limits_{T'_i\backslash\Omega_i}\psi_{\sA}(s,\gamma)|D(\gamma)|d\gamma=\\
&&=c R_G(f',f_\tau)+\sum_i |W(\bold T_i)|^{-1}\lim_{\Omega_i\to T'_i}
\Res_{s=0} \int\limits_{T'_i\backslash\Omega_i}\Phi_{\varepsilon,s}
(\sA(\{\gamma\},f')\Phi(\gamma,f_\tau)|D(\gamma)|d\gamma\\
&&+\sum_i |W(\bold T_i)|^{-1}\lim_{\Omega_i\to T'_i}\Res_{s=0}
\int\limits_{T'_i\backslash\Omega_i}\varphi_{\sA}(s,\gamma)|D(\gamma)|d\gamma.
\end{eqnarray*}
Note that, in the first sum, the limit is zero by the local boundedness of (normalized) orbital integrals.
Further, in the second sum, since $\varphi_{\sA}(s,\gamma)$ is entire, the residue is independent of the choice of $\Omega_i$, and hence we fix $\Omega_i$ and drop the limit.
Thus,
\begin{eqnarray*}
&&\Res_{s=0}\langle A(s,\tau'\otimes\tau,W_0) f(e),\ \tilde v'\otimes v\rangle\\
&&=c R_G (f',f_\tau)+\Res_{s=0} \sum_i |W(\bold
T_i)|^{-1}\int\limits_{T'_i\backslash\Omega_i} \varphi_{\sA}(s,\gamma)|D(\gamma)|d\gamma,
\end{eqnarray*}
with the second term independent of the choices of $\Omega_i$.
Note that this second term depends only on the singular part of the $T_i$.

Now suppose $n<2m$.
If $n$ is odd, we consider the embedding $G({n-1\over 2})\hookrightarrow G(m)$ by
$$
h\mapsto\begin{pmatrix} I_{m-({n-1\over 2})}\\
&h\\
&&I_{m-({n-1\over 2})}\end{pmatrix}.
$$
If $n$ is even we embed $SO_n(F)$ in $G(m)$ by
$$
h\mapsto\begin{pmatrix} I_{m-(n/2)}\\
&\varphi_n(h)\\
&&I_{m-(n/2)}\end{pmatrix},
$$
where $\varphi_n\colon SO_n\hookrightarrow SO_{n+1}$ is the map we considered in Section 3.
Let 
$$n'=\begin{cases} {n-1\over 2},&\text{if $n$ is odd}\\
\frac{n}{2},&\text{if $n$ is even} \end{cases}.$$
Then we are considering $G(n')$ as a subgroup of $G(m)$.
Suppose $Y\in GL_n(F)$ and $X\in M_{n\times 2m+1}(F)$ satisfy (1.1).
Then $X'Y^{-1}X$ has rank at most $n$, and therefore at most $n$ of the eigenvalues of $I-X'Y^{-1} X$ are different from 1.
Thus, the semisimple part of the conjugacy classes in $N_\varepsilon(\{Y^{-1}\})$ all meet $G(n')$.
Let $\sC^\vee$ be the subset of classes in $\sC$ whose semisimple part meets $G(n')$.
Then $N_\varepsilon\colon\sN\to\sC^\vee$.

\begin{lemma}If $n < 2m$ then the norm correspondence $\sN_{\varepsilon}$ has finite fibers.
\end{lemma}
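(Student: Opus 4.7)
The strategy is to reduce to the balanced cases $n=2n'+1$ of Section~2 or $n=2n'$ of Section~3, where $n'=(n-1)/2$ when $n$ is odd and $n'=n/2$ when $n$ is even. Fix a class $\{h\}\in\sC^\vee$ in the image. Since its semisimple part meets $G(n')$, we may choose a representative of the form $h=\diag(I_{m-n'},h_0,I_{m-n'})$ with $h_0$ sitting in the middle block as an element of $G(n')$, respectively of its image under the embedding $\varphi$ of Section~3 when $n$ is even.

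Given any solution $(X,Y)$ to~(1.1) with $N_\varepsilon(\{Y^{-1}\})=\{h\}$, conjugate by an element of $G(m)$ so that $(-1)^n(I-X'Y^{-1}X)$ equals $h$ in the above canonical form. The rank bound on $X'Y^{-1}X$ forces the $(-1)^n$-eigenspace of $h$ to have codimension at most $n$, and this eigenspace contains the outer identity blocks of the canonical form. Appealing to Lemma~1.3 to replace $X$ by $Xg$ for a suitable $g\in SO_{2m+1}(F)$, and using the $h$-invariance of $U=F^nX$ supplied by Lemmas~1.5 and~1.7, we may arrange $X=[\,0\mid X_0\mid 0\,]$ with $X_0$ concentrated in the middle $(2n'+1)$-block (or its analogue in the even case). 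A direct block computation, parallel to Lemma~3.1, then shows that $(X_0,Y)$ satisfies the defining equation~(1.1) for the Levi $GL_n\times G(n')$ of a smaller odd-dimensional orthogonal group, with norm image equal to $\{h_0\}$.

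The smaller problem is precisely the balanced case treated in Sections~2 and~3, so by Proposition~4.1 the fiber over a regular semisimple class $\{h_0\}$ is a single $\varepsilon$-class, and in the general semisimple case the fiber is finite by the analysis of those sections. The residual ambiguities in our reduction---namely, the conjugation in $G(m)$ carrying the given representative to the canonical form, and the choice of $g\in SO_{2m+1}(F)$ putting $X$ in block shape---lie in the centralizer $\uG_h(F)$ and the stabilizer of the block decomposition, respectively, and each contributes only finitely many $\varepsilon$-conjugacy classes $\{Y^{-1}\}$. Composing these finite contributions yields the finiteness of the original fiber.

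The main obstacle is justifying the block-form reduction of $X$: one must show that $U=F^nX$ can always be moved, by the right action of $SO_{2m+1}(F)$, into the middle $(2n'+1)$-block dictated by the canonical form of $h$. This requires analyzing the interaction of $U$ with the $h$-invariant decomposition of $F^{2m+1}$ using Lemmas~1.7 and~1.8, and handling the parity subtleties in the $n$ even case by passing through $\varphi$, exactly in the spirit of the constructions of Corollary~4.2.
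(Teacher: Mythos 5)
Your reduction to the balanced case founders at the step you yourself flag as the main obstacle, and that obstacle is not surmountable as stated. To put $X$ in the form $[\,0\mid X_0\mid 0\,]$ you need the subspace $U=F^nX$ to be carried by the right action of $SO_{2m+1}(F)$ into the standard middle $(2n'+1)$-block. But for an arbitrary solution $(X,Y)$ in the fiber, $U$ may be degenerate --- even totally isotropic --- and even when $U$ is non-degenerate of the right dimension it need not be isometric to the middle block, since over a $p$-adic field there are several isometry classes of quadratic spaces of a given dimension; in either case no element of $SO_{2m+1}(F)$ moves $U$ into the middle block. The constructions of Proposition 2.7, Lemma 3.1 and Corollary 4.2 only produce \emph{one} convenient solution over a given $h$; they say nothing about normalizing \emph{every} solution, which is what a fiber computation requires. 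Two further assertions are also unsupported: Proposition 4.1 governs only regular classes, and fiber-finiteness in the balanced case over general semisimple classes is not separately established in Sections 2--3 (you would end up needing the very eigenvalue argument below); and the claim that the residual ambiguities, which live in the infinite group $\uG_h(F)$, contribute only finitely many $\varepsilon$-classes is exactly the kind of statement the lemma is supposed to prove.

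The paper's proof sidesteps the quadratic space on $F^{2m+1}$ entirely by decomposing the $GL_n$ side instead. Put $X$ in row echelon form, so $F^n=F^k\oplus F^{n-k}$ with $F^{n-k}$ the left kernel of $X$ and $X|_{F^k}$ injective. By Lemma 1.5(b), $\varepsilon(Y)^{-1}Y^{-1}X=X(I-X'Y^{-1}X)$, while on the left kernel one computes $v\varepsilon(Y^{-1})Y^{-1}=-v$ from (1.1); hence $\varepsilon(Y)^{-1}Y^{-1}$ is block upper triangular $\left(\begin{smallmatrix} A&*\\ 0&-I\end{smallmatrix}\right)$ with $A$ determined by $I-X'Y^{-1}X$. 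Thus the semisimple class of $\varepsilon(Y)^{-1}Y^{-1}$ is pinned down by $\{h\}$ up to finitely many choices, and with it the $\varepsilon$-conjugacy class of $Y^{-1}$ up to finite ambiguity. If you want to salvage your approach you would need to replace the geometric normalization of $U$ by this linear-algebraic argument, at which point the reduction to the balanced case becomes superfluous.
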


\begin{proof}By Lemma 1.5(b) we know $\varepsilon(Y)^{-1} Y^{-1} X=X(I-X'Y^{-1}X)$.
We assume that $X$ is in row echelon form, and the last $n-k$ rows of $X$ are zero.
This gives a decomposition $F^n=F^k\oplus F^{n-k}$, with $F^{n-k}$ the left kernel of $X$ and $X|_{F^k}$ an injection of $F^k$ into $F^{2m+1}$.
Now the matrix of $\varepsilon(Y)^{-1}Y^{-1}$ with respect to this basis is $\begin{pmatrix} A&*\\ 0&-I\end{pmatrix}$, with $A$ determined by $I-X' Y^{-1} X$.
This shows the fibers of $N_\varepsilon$ are finite.
\end{proof}

If $n$ is odd, we first pick $X_0\in M_n(F)$ for which $Y+\tilde{\varepsilon}(Y)=X_0X'_0$.
If $n$ is even, we take $X_0\in M_{n\times n+1}(F)$ for which $Y+\tilde{\varepsilon}(Y)=X_0X'_0$.
Letting $$j=\begin{cases} {2m+1-n\over 2},&\text{$n$ odd};\\
m-\frac{n}{2},&\text{$n$ even,}\end{cases}$$ we then set
$X=\begin{pmatrix} 0_j&X_0&0_j\end{pmatrix}.$
Note that $X'=\begin{pmatrix} 0_j\\ X'_0\\ 0_j\end{pmatrix}$.
Thus, $XX'=X_0 X'_0=Y+\tilde\varepsilon(Y)$.
It is a straightforward computation that
$$
I-X' Y^{-1} X=\begin{pmatrix} I_j\\ &I-X'_0 Y^{-1} X_0\\ &&I_j\end{pmatrix},
$$
and thus, almost all conjugacy classes in $\sN$ can be parameterized by regular semisimple conjugacy classes in
$\sC^\vee{}$.

We now pick a set $\{\uT_i\}$ of representatives for the conjugacy classes of Cartan subgroups in $\uG(n')$.
Note that none of these Cartans are elliptic in $\uG(m)$.
Applying Lemma 1.4 and decomposing orbits as before, we find the contribution from each orbit is 
\begin{eqnarray*}
&&\psi(s,Z)=\sum_\alpha\omega'(\alpha)
\int\limits_{g\in G'/G'_{\varepsilon,Y}}
\int\limits_{h\in G_Z\backslash G}\int\limits_{X G_Z(F)}f'(\alpha g Y\varepsilon(g)^{-1})f_\tau(h^{-1}Zh)\cdot\\
&&\cdot|\det(g Y\varepsilon(g)^{-1})|^s dg\, dh\, d(X h_0)\int\limits_{Z(G)}
\xi_{\tilde L}(z^{-2} g Y\varepsilon(g)^{-1})\xi_{L'}(z^{-1}gX h_0 h)\\
&&\cdot |\det z|^{-2s} d^\times z,
\end{eqnarray*}
where $X$ is as above.
Note that $X G_Z(F)\cong G'_{\varepsilon,Y}$, and thus we rewrite the above formula as
\begin{eqnarray*}
&&\psi(s,Z)=\sum_\alpha\omega'(\alpha)
\int\limits_{G'/G'_{\varepsilon,Y}}
\int\limits_{G_Z\backslash G}\int\limits_{G'_{\varepsilon,Y}}f'(\alpha g Y\varepsilon(g)^{-1})f_\tau(h Zh^{-1})\\
&&\cdot|\det(g Y\varepsilon(g)^{-1})|^s\, dg\, dh\, dg_0\,
\int\limits_{Z(G')}
\xi_{\tilde L}(z^{-2} g Y\varepsilon(g)^{-1})\xi_{L'}(z^{-1}gg_0X h)\\
&&\cdot |\det z|^{-2s} d^\times z,
\end{eqnarray*}
and as in 
\cite{GS1,GS2} this expression is the same as that for $n\geq 2m$, with the roles of
$G_Z$ and $G'_{\varepsilon,Y}$, as well as those of $h_0$ and $g_0$ interchanged. Define
$\psi_{\sA}(s,\gamma)=\sum\limits_{Y\in\sA(\{\gamma\})}\psi(s,\gamma_Y)$, as before. Also let $\varphi_{\sA}(s,\gamma)$
be as before. The integration over orbits is then realized as integration over $\bigcup\limits_i T_i$, with $\{\uT_i\}$
as above. The argument then follows as in the case $n\geq 2m$, verbatim.
Note however that for any $\{\gamma\}\in\sC^\vee,\
\Phi(\gamma,f_\tau)=0$, since all $\bold T_i$ are non--elliptic and the orbital integral is in $G(m)$, not
$G(n').$ Therefore, $R_G(f',f_\tau)\equiv 0$.

We now state our main result, which we have proved.

\begin{thm}Let 
$$n'=\begin{cases} \frac n2&\text{if }n\text{ is even};\\
{n-1\over 2}&\text{ if }n\text{ is odd.}\end{cases}$$
Then let $\ell=\text{{\rm min}}(n',m)$, and let $\{\uT_i\}$ be a collection of representatives for the conjugacy
classes of Cartan subgroups of $\uG(\ell)$. For each $i$, choose an open compact subset $\Omega_i$ of the regular
elements
$T'_i$ of $T_i$. Then the intertwining operator $A(s,\tau'\otimes\tau,w_0)$ has a pole at $s=0$ if and only if
$$
c R_G (f',f_\tau)+\Res_{s=0}\sum_i|W(\bold T_i)|^{-1}
\int\limits_{T'_i\backslash\Omega_i}\varphi_{\sA}(s,\gamma)|D(\gamma)|d\gamma\not= 0,
$$
for some choice of a matrix coefficient $f_\tau$ and an $f'\in C^c_\infty
(GL_n(F))$ defining a matrix coefficient $\psi_{\tau'}$ of $\tau'$.
The constant $c=(2n\log q_F)^{-1}$.
If $n<2m$, then $R_G(f',f_\tau)\equiv 0$, and thus the residue is given by the second term alone.
\end{thm}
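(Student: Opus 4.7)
The plan is to assemble Theorem 4.9 from the pieces developed in Section 4 rather than introduce any new technology. Starting from the matrix coefficient expression (1.3) for $\langle\tilde v'\otimes\tilde v,A(s,\tau'\otimes\tau,w_0)f(e)\rangle$, I would decompose the integral over pairs $(X,Y)$ satisfying (1.1) into integrals over $M$-orbits, using the factorization $d^*(X,Y)=d_1^*(X,Y)\,d_2^*(X,Y)$ of the $M$-invariant measure established in the discussion preceding Lemma~4.5. The contribution of each orbit is exactly the function $\tilde\psi(s,Z)$ (or $\psi(s,Z)$, after the simplification valid when $(X,Y)$ is a canonical section, Corollary~2.8) that was analyzed in Lemma~4.5 and Corollary~4.6.

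Next I would pass from the integral over $M$-orbits in $\sN$ to an integral over Cartan subgroups of $\uG(\ell)$, with $\ell=\min(n',m)$. By Proposition~4.1 (together with Corollary~4.2 and the reductions of Section~3 in the cases $n>2m+1$ and $n<2m$), almost all $\varepsilon$-regular orbits in $\sN$ are parameterized, via the norm correspondence $N_\varepsilon$, by regular semisimple classes in $\sC$ (respectively $\sC^\vee$ when $n<2m$). Lemma~4.4 identifies $N_\varepsilon$ with the Kottwitz--Shelstad norm up to sign, which justifies transporting the measure from $\sN$ to $\bigcup_i T_i$ with weight $|W(\uT_i)|^{-1}\kappa_1(\{\gamma\},\{\gamma'\})|D(\gamma)|\,d\gamma$. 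The summation over $\sA(\{\gamma\})$ and inclusion of the transfer factor $\Delta$ then produces the combinations $\psi_{\sA}(s,\gamma)$ and $\varphi_{\sA}(s,\gamma)$.

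Now Lemma~4.5 provides the key decomposition $\psi(s,Z)=\varphi(s,Z)+R(s,Z)$ with $\varphi(s,Z)$ entire and $R(s,Z)$ carrying the single pole. Summed over $\sA(\{\gamma\})$, the pole part integrates to give $cR_G(f',f_\tau)$ with $c=(2n\log q_F)^{-1}$, provided $\uT_i$ is elliptic; on non-elliptic tori the corresponding orbital integral $\Phi(\gamma,f_\tau)$ vanishes by supercuspidality of $\tau$, so only the elliptic tori contribute to the main term. This leaves
\[
\Res_{s=0}\sum_i|W(\uT_i)|^{-1}\int_{T_i'}\varphi_{\sA}(s,\gamma)|D(\gamma)|\,d\gamma,
\]
which I would split as the integral over an arbitrary open compact $\Omega_i\subseteq T_i'$ plus the integral over $T_i'\setminus\Omega_i$. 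Since $\varphi_{\sA}(s,\gamma)$ is entire in $s$, the $\Omega_i$-piece is holomorphic at $s=0$ and its residue vanishes; the residue of the complementary piece is therefore independent of $\Omega_i$, yielding the singular term in the theorem.

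The main obstacle, and the place requiring the most care, is the interchange of $\Res_{s=0}$ with the limit $\Omega_i\to T_i'$ and with the sum over Cartans. This is handled by the local boundedness of normalized orbital integrals (via Harish-Chandra's theorem applied to $|D(\gamma)|^{1/2}\Phi(\gamma,f_\tau)$) together with the uniformity statement in Corollary~4.6 allowing $b$ and $E_Z$ to be chosen independently of $Z\in\Omega$; these bounds control the convergence of the residues as $\Omega_i$ exhausts $T_i'$. Finally, for $n<2m$ the tori $\uT_i$ arise from $\uG(n')$ and are never elliptic in $\uG(m)$, forcing $\Phi(\gamma,f_\tau)\equiv 0$ on the relevant orbits and hence $R_G(f',f_\tau)\equiv 0$, as asserted.
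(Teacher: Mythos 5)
Your proposal reproduces the paper's own derivation: the theorem is obtained exactly by assembling the orbit decomposition of (1.3), the parameterization of orbits by Cartan subgroups via Propositions 2.7, 3.1, 4.1 and Corollary 4.2, the splitting $\psi(s,Z)=\varphi(s,Z)+R(s,Z)$ from Lemma 4.5 and Corollary 4.6, and the vanishing of $R_G$ for $n<2m$ because the relevant tori from $\uG(n')$ are non-elliptic in $\uG(m)$. This is correct and essentially identical to the argument in Section 4.
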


\begin{corollary}Suppose $\tau'\simeq\tilde\tau'$.
Fix, as in Theorem 4.9, a choice of compact open subsets $\Omega_i$ of $T'_i$.
\item{(a)} The induced representation $I(\tau'\otimes\tau)$ is irreducible if and only if for some choice of matrix
coefficients
$f_\tau,\psi_{\tau'}$
$$
c R_G (f',f_\tau)+\sum_i|W(\bold T_i)|^{-1}
\Res_{s=0}\int\limits_{T_i\backslash\Omega_i}
\varphi_{\sA} (s,\gamma) |D(\gamma)|d\gamma\not= 0,
$$
where $f'\in C_c^\infty (GL_n(F))$ defines $\psi_{\tau'}$.
\item{(b)} Assume $\tau$ is generic.
If $I(\tau'\otimes\tau)$ is irreducible then $I(s,\tau'\otimes\tau),\ s\in\Bbb R$ is reducible exactly at $s_0=\pm 1/2$
or $s_0=\pm 1$, and at only one of these pairs. In this case, the complementary series is
$I(s,\tau'\otimes\tau),$ with $-s_0<s<s_0$, and the subquotients of $I(s_0,\tau'\otimes\tau)$  for $s_0=1/2,$ or $1$ are
described in 
\cite{Sh1}, namely the Langlands quotient is non-tempered and non-generic, while the unique irreducible
subrepresentation is a generalized special discrete series.
\end{corollary}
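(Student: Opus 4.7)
The plan for (a) is to combine Theorem 4.9 with the classical reducibility criterion for parabolic induction from self-contragredient unitary supercuspidal data on a maximal parabolic. Since $\tau'\simeq\tilde\tau'$, the representation $\tau'\otimes\tau$ is stabilized by $w_0$, and the Knapp--Stein/Harish-Chandra theorem (as invoked in the companion papers \cite{Sh3,GS1,GS2}) gives that $I(\tau'\otimes\tau)$ is irreducible exactly when the standard intertwining operator $A(s,\tau'\otimes\tau,w_0)$ fails to be holomorphic at $s=0$, i.e., has a pole there. Theorem 4.9 identifies that pole precisely with the nonvanishing of $cR_{\uG}(f',f_\tau)+\sum_i|W(\uT_i)|^{-1}\Res_{s=0}\int_{T'_i\setminus\Omega_i}\varphi_{\sA}(s,\gamma)|D(\gamma)|\,d\gamma$ for some choice of $f_\tau$ and $f'$, yielding (a).

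For (b), I would appeal to the results of \cite{Sh1,Sh2} in the generic supercuspidal setting. When $\tau$ is generic, $\tau'\otimes\tau$ is a generic supercuspidal representation of $M$, and Shahidi's analysis of the Plancherel measure shows that the possible real reducibility values of $I(s,\tau'\otimes\tau)$ are dictated by the poles of the $L$-function product $L(2s,\tau',r)L(s,\tau'\times\tau)$, where $r$ is either $sym^2$ or $\wedge^2$. Self-duality of $\tau'$ together with the results of \cite{Sh2} forces its Langlands parameter $\phi'$ to factor through exactly one of the symplectic or the orthogonal group, so precisely one of $L(s,\tau',sym^2)$ or $L(s,\tau',\wedge^2)$ has a pole at $s=0$; the corresponding reducibility point on the real line is $s_0=\pm 1$ in the symmetric-square case and $s_0=\pm 1/2$ in the exterior-square case. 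The irreducibility of $I(0,\tau'\otimes\tau)$ assumed in (b) removes $s=0$ itself as a reducibility value, so only the pair $\{\pm s_0\}$ remains; the two pairs $\{\pm 1/2\}$ and $\{\pm 1\}$ are not simultaneously possible because $\phi'$ factors through only one of the two groups.

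The description of the subquotients at $\pm s_0$ is then taken directly from \cite{Sh1}: the Langlands quotient is non-tempered and non-generic, while the unique irreducible subrepresentation is a generalized special discrete series in Shahidi's sense. The complementary series range $(-s_0,s_0)$ follows from the standard deformation argument, namely that along the real axis irreducibility and unitarity persist from $s=0$ out to the nearest real reducibility point. The main obstacle is the first step of (a): verifying that the classical equivalence between reducibility at $s=0$ and the pole behavior of $A(s,\tau'\otimes\tau,w_0)$ does apply in the mixed $GL_n\times SO_{2m+1}$ Levi setting, including the normalization needed to rule out the irrelevant possibility of a holomorphic intertwiner acting by a nontrivial scalar; once this is in hand, both (a) and (b) become translations of Theorem 4.9 and invocations of \cite{Sh1,Sh2}.
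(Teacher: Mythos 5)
Your overall route is the paper's own: Corollary 4.10 is not given a separate proof there, being a direct consequence of Theorem 4.9 combined with the standard Harish--Chandra/Knapp--Stein criterion (for self-contragredient unitary supercuspidal data on a maximal parabolic, $I(\tau'\otimes\tau)$ is irreducible if and only if the Plancherel measure vanishes at $s=0$, equivalently $A(s,\tau'\otimes\tau,w_0)$ has a pole at $s=0$), together with Theorem 8.1 of \cite{Sh1} for part (b). Your part (a) is correct, including the remark that one must know a holomorphic nonzero self-intertwiner at $s=0$ forces reducibility rather than acting as a scalar.

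In part (b), however, you have the assignment of reducibility points backwards, and you conflate the two relevant dichotomies. The product governing the pole of $A(s,\tau'\otimes\tau,w_0)$ is $L(2s,\tau',sym^2)\,L(s,\tau'\times\tau)$: the symmetric square is evaluated at $2s$, so a pole of $L(s,\tau',sym^2)$ at $s=0$ produces reducibility at $s_0=\pm 1/2$, while a pole of the Rankin--Selberg factor $L(s,\tau'\times\tau)$ at $s=0$ produces reducibility at $s_0=\pm 1$ --- not the other way around, and the $\pm 1$ case is the Rankin--Selberg case, not "the exterior-square case." The dichotomy from \cite{Sh2} between $sym^2$ and $\wedge^2$ poles is not by itself the right trichotomy here: a pole of $L(s,\tau',\wedge^2)$ only makes a pole of $L(s,\tau'\times\tau)$ \emph{possible} (it occurs precisely when $\tau'\simeq\Pi(\tau)$, cf.\ Proposition 5.3), and if neither $L(2s,\tau',sym^2)$ nor $L(s,\tau'\times\tau)$ has a pole at $s=0$ one is in the case where $I(\tau'\otimes\tau)$ itself is reducible. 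The correct argument for (b) is: under the hypothesis that $I(\tau'\otimes\tau)$ is irreducible, exactly one of $L(2s,\tau',sym^2)$ and $L(s,\tau'\times\tau)$ has a pole at $s=0$ (they cannot both, since the intertwining operator has at most a simple pole), and Theorem 8.1 of \cite{Sh1} then gives reducibility exactly at $\pm 1/2$ in the first case and $\pm 1$ in the second, with the stated complementary series and the description of the subquotients at $s_0$. The conclusion as stated in the corollary survives your swap, but the justification as written is wrong.
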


\begin{rmk}Some poles of $A(s,\tau'\otimes\tau)$ come from poles of $L(2s,\tau',sym^2)$, which are determined
in 
\cite{Sh2}.
The residue in Corollary  4.10 represents poles of this $L$--function as well as those of
$L(s,\tau'\times\tau)$. In the next section we discuss how these appear in the theory of twisted endoscopy.
\end{rmk}

\section{Connection with twisted endoscopy, and using automorphic transfer}
We now discuss the connection of the main result with the theory of twisted endoscopy 
\cite{KS}.
While our initial reasoning is similar to that of 
\cite{GS1,GS2}, the recent results on automorphic transfer of
\cite{C-K-PS-S}, 
\cite{G-R-S}, and 
\cite{JS} allow us to be more explicit.
They also allow us to make progress towards showing the local component of the automorphic transfer is the twisted
endoscopic transfer.  If we assume that poles of $L(s,\tau'\times\tau)$ should be controlled by the regular term when
$n\geq 2m,$ then we can, in fact complete the proof of this statement in that case. To begin, we let
$\chi_\tau$ be the distribution character of
$\tau$. Then $\chi_\tau$ is represented by a locally integrable function, also denoted by $\chi_\tau$ 
\cite{HC1,HC2}.
We may then choose a matrix coefficient, $f_\tau$, with the
property that, for any regular semisimple $\gamma\in G(m)$, $\chi_\tau(\gamma)=\Phi(\gamma,f_\tau)$ 
\cite{K,C2}.

Since $(\tau')^\varepsilon=(\tau')^{w_0}=\tilde\tau'=\tau'$ (by our assumption) we can extend $\tau'$ to
$GL_n(F)\ltimes\{1,\varepsilon\}$, by fixing an equivalence $\tau'(\varepsilon)$ from $\tau'$ to $(\tau')^\varepsilon$,
whose square is the identity. The $\varepsilon$--twisted character $\chi^\varepsilon_{\tau}$, is then defined, as a
distribution by $\chi^\varepsilon_{\tau'}(f')=\text{ trace }(\tau'(f')\tau'(\varepsilon))$, for any $f'\in C_c^\infty
(GL_n(F))$. Then Clozel showed that $\chi^\varepsilon_{\tau'}$ can be represented by a locally integrable function on
the $\varepsilon$--regular set, 
\cite{C1}, and Kottwitz and Rogawski \cite{KR} discuss the
existence of $\varepsilon$--pseudo coefficients.  We assume the existence of such $\varepsilon$--pseudocoefficients.
That is, we assume there is a choice,
$f_{\tau'}$, of matrix coefficient for
$\tau'$ for which
$$
\chi_{\tau'}^\varepsilon (\gamma')=\Phi_\varepsilon(\gamma',f_{\tau'}),
$$
for all $\varepsilon$--regular elements $\gamma'\in GL_n(F)$.
Choosing $f'\in C_c^\infty (GL_n(F))$ which defines $f_{\tau'}$, we then have
$$
\Phi_\varepsilon (\sA(\{\gamma\}),f')=\sum_{\gamma'\in\sA}\Delta
(\gamma,\gamma')\chi^\varepsilon_{\tau'}(\gamma'),
$$
which we denote by $\chi^\varepsilon_{\tau'}(\sA(\{\gamma\}))$.
Now if $n=2m,$ then for such a choice of $f_\tau$ and $f_{\tau'}$ (i.e.~$f'$), the regular term becomes
$$
R_G(f',f_\tau)=\sum_i \mu(\bold T_i)|W(\bold T_i)|^{-1}\int\limits_{T_i}\chi_\tau(\gamma)
\chi^\varepsilon_{\tau'}(\gamma')\,d\gamma.
$$
Thus, $R_G$ becomes a pairing between the character $\chi_\tau$ of $\tau$ and the $\varepsilon$--twisted character $\chi^\varepsilon_{\tau'}$ of $\tau'$.
Therefore, we expect non--vanishing of $R_G(f',f_\tau)$ to indicate $\tau'$ comes from $\tau$ via twisted endoscopy.
We make the following definition.

\begin{deff}
A self dual irreducible unitary supercuspidal representation $\tau'$ of $GL_{2m}(F)$ is said to be the twisted endoscopic transfer of a
supercuspidal representation $\tau$ of $G(m)$ if $R_G(f',f_\tau)\not= 0$, for some matrix coefficient $f_\tau$ of
$\tau$ and some $f'\in C_c^\infty(G)$ defining a matrix coefficient of $\tau'$.
\end{deff}

Notice that there is a definition in 
\cite{Sh2} of when an irreducible unitary self dual supercuspidal representation $\tau'$ of
$GL_{2m}(F)$ comes from $SO_{2m+1}(F)$, and this should be interpreted as indicating that $\tau'$ is an
$\varepsilon$--twisted endoscopic transfer of $\pi$ for some $\pi$. The definition above is finer, in that it indicates
from which representation $\tau'$ should transfer.

We now define another version of transfer from $SO_{2m+1}(F)$ to $GL_{2m}(F)$.
This comes from the automorphic transfer from $SO_{2m+1}(\Bbb A_K)$ to $GL_{2m}(\Bbb A_K)$, the weak form of which is established by 
{\cite {C-K-PS-S}}, and the strong form of which is established in {\cite {JS}}.
Here $K$ is an arbitrary number field.

To be more precise, let $\tau$ be an irreducible generic supercuspidal representation of $G=\uG(F)=G(m)$.
By the techniques described in 
\cite{Sh1}, we can find a number field $K$, a finite place $v_0$ of $K$
with $K_{v_0}\simeq F$, and a cuspidal automorphic representation $\pi=\otimes_v \pi_v$ of $G(\Bbb A_K)$ such that $\pi
_{v_0}\simeq\tau$, and $\pi_v$ is unramified for $v\not= v_0$ (and $v<\infty$). By {\cite {C-K-PS-S}} and {\cite {JS}}
there is an automorphic representation $\Pi=\otimes_v\Pi_v$ of $GL_{2m}(\Bbb A_K)$ with $L(s,\Pi)=L(s,\pi)$. Moreover,
if
$\rho\colon GL_{2m}(\Bbb C)\to GL_k(\Bbb C)$ is a representation, then $L(s,\Pi,\rho)=L(s,\pi,\rho\circ i)$, where
$$
i\colon Sp_{2m}(\Bbb C)\hookrightarrow GL_{2m}(\Bbb C)
$$
is the injection of (connected components of) $L$--groups.
Let $\Pi(\tau)=\Pi_{v_0}$.
Note by {\cite {JS}}, $\Pi(\tau)$ is the $v_0$ component of the automorphic transfer $\Pi'$ of
$\pi'=\otimes_v\pi'_v$ of $G(\Bbb A_{K'})$ whenever $K'_{v_0}\simeq F$ and $\pi'_{v_0}\simeq\tau$.

\begin{deff}Let $\tau$ be an irreducible generic supercuspidal representation of $G(F)$.
We call the irreducible representation $\Pi(\tau)$ defined above the {\it local (automorphic) transfer} of $\tau$ to
$GL_{2m}(F)$.
\end{deff}

One property that the local transfer satisfies is $L(s,\tau'\times\tau)=L(s,\tau'\times\Pi(\tau))$ \cite{JS}.
Note that this Rankin--Selberg $L$--function agrees with those of Shahidi 
\cite{Sh4} and by 
\cite{HT,He} these are also Artin $L$--functions. We recall the basic consequences of this equality as the following
proposition.  We refer to \cite{JS} for another statement of these results.

\begin{prop}Suppose $\tau$ is a generic irreducible unitary supercuspidal representation of $G(m)$, and $\tau'$ an irreducible unitary supercuspidal self-dual representation of $GL_n(F)$.
\begin{description}
\item[(i)]If $n>2m$, then $L(s,\tau'\times\tau)$ is always entire.
\item[(ii)]If $n=2m$, then $L(s,\tau'\times\tau)$ has a pole at $s=0$ if and only if $\tau'\cong\Pi(\tau)$.
\item[(iii)]If $n<2m$, then $L(s,\tau'\times\tau)$ has a pole if and only if $\Pi(\tau)\subset
\tau_1\times\tau_2\times\ldots\times\tau_k$, with $\tau'\simeq\tau_i\nu$ for some for some $i$ and some unramified character $\nu.$
\end{description}
\end{prop}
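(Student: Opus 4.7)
The plan is to build the proof directly on the identity
\[
L(s,\tau'\times\tau)=L(s,\tau'\times\Pi(\tau))
\]
recorded immediately before the proposition, which reduces everything to a pole analysis of a Rankin--Selberg $L$--function on $GL_n\times GL_{2m}$. The three cases will then be three different bookkeeping answers to a single question: when can $L(s,\tau'\times\Pi(\tau))$ have a pole?

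First I would decompose $\Pi(\tau)$ into supercuspidal pieces. Since $\tau$ is generic, the local transfer $\Pi(\tau)$ is again generic (this is part of what the functorial transfer of \cite{C-K-PS-S,JS} provides), so it is the unique generic subrepresentation of a standard module
\[
\Pi(\tau)\hookrightarrow \sigma_1\nu_1\times\cdots\times\sigma_k\nu_k,
\]
with each $\sigma_i$ an irreducible unitary supercuspidal representation of $GL_{n_i}(F)$, each $\nu_i$ an unramified character, and $n_1+\cdots+n_k=2m$. Via the local Langlands correspondence \cite{HT,He}, the Rankin--Selberg $L$--function equals the Artin $L$--function of $\phi_{\tau'}\otimes\phi_{\Pi(\tau)}$, and $\phi_{\Pi(\tau)}=\bigoplus_i \phi_{\sigma_i\nu_i}$, so
\[
L(s,\tau'\times\tau)=L(s,\tau'\times\Pi(\tau))=\prod_{i=1}^{k} L(s,\tau'\times\sigma_i\nu_i).
\]
I would then feed in the standard pole characterization for supercuspidal pairs: for irreducible unitary supercuspidals $\sigma$ on $GL_a$ and $\sigma'$ on $GL_b$, the function $L(s,\sigma\times\sigma')$ is entire unless $a=b$, in which case its poles occur precisely at those $s_0\in\mathbb{C}$ for which $\sigma'\simeq\tilde\sigma\otimes|\det|^{-s_0}$.

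With these two ingredients, the three assertions follow by bookkeeping, using self-duality $\tau'\simeq\tilde{\tau'}$. For (i), the dimensional constraint $n>2m\ge n_i$ kills every factor, hence the full product is entire. For (ii) with $n=2m$, a pole at $s=0$ forces some $n_i=2m$, so $k=1$ and $\Pi(\tau)\simeq\sigma_1\nu_1$ is itself supercuspidal; the pole condition $\sigma_1\nu_1\simeq\tilde{\tau'}=\tau'$ becomes $\Pi(\tau)\simeq\tau'$, and the converse reverses the argument. For (iii) with $n<2m$, a pole at any $s_0$ requires some $n_i=n$ and $\sigma_i\nu_i$ to be an unramified twist of $\tau'$, which is exactly the stated condition. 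The main obstacle I anticipate is the factorization of $L(s,\tau'\times\Pi(\tau))$ as a product over the pieces of the standard module when $\Pi(\tau)$ is only a subrepresentation: I would sidestep this by appealing to the Langlands parameter description of \cite{HT,He}, which makes the factorization automatic, and as a cross-check use the compatibility of Shahidi's $L$--function \cite{Sh4} with the Jacquet--Piatetski-Shapiro--Shalika one for generic representations.
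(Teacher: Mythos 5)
Your proposal is correct and follows essentially the same route as the paper: the identity $L(s,\tau'\times\tau)=L(s,\tau'\times\Pi(\tau))$ from \cite{JS}, followed by the Rankin--Selberg pole criterion of \cite{JPSS} (or \cite{Sh4}) in terms of the supercuspidal support of $\Pi(\tau)$, with (i)--(iii) falling out by the dimension count. The only difference is expository: the paper cites the JPSS criterion directly, whereas you reconstruct it via the parameter factorization, which is a harmless elaboration of the same argument.
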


\begin{proof}Let $\Pi(\tau)$ be the local automorphic transfer defined in Definition 5.2.
Since $L(s,\tau'\times\tau)=L(s,\tau'\times\Pi(\tau))$, we know from 
\cite{JPSS}, or \cite{Sh4} that
$L(s,\tau'\times\Pi(\tau))$ has a pole at $s=0$ if and only if
$\Pi(\tau)\subset\tau_1\times\ldots\times\tau_k$, with $\tau_i\nu\simeq\tilde\tau'\simeq\tau'$, for some $i$ and an unramified character $\nu.$
This immediately gives (i)--(iii).
\end{proof}

Now suppose that $\tau'$ is any irreducible unitary self dual supercuspidal of $GL_{2m}(F)$.
Then by \cite{Sh2} exactly one of the two $L$--functions, $L(s,\tau',sym^2)$ and
$L(s,\tau',\wedge^2)$ has a pole at $s=0$. Further $L(s,\tau'\times\tau)=L(s,\tau',\wedge^2)L(s,\tau',sym^2)$.

Let $\psi\colon W_F\to GL_{2m}(\Bbb C)=\,^L GL_n$ be the Langlands parameter for $\tau'$.
Then recent results of Henniart 
{\cite {He2}} show that, as expected
\begin{eqnarray}
L(s,\tau',\wedge^2)&=&L(s,\wedge^2\psi),\text{ and }\\
L(s,\tau',sym^2)&=&L(s,sym^2\psi).
\end{eqnarray}
If $L(s,\tau',sym^2)$ has a pole at $s=0$, then so does $L(2s,\tau',sym^2)$, and hence $L(s,\tau'\times\tau)$ is holomorphic at $s=0$.
Now suppose that $L(s,\tau',sym^2)$ has no pole at $s=0$.
Then $L(s,\tau',\wedge^2)$ has a pole at $s=0$, and in fact 
\cite{Sh2}, Theorem 7.6(b) shows $\tau'$ must come from $SO_{2m+1}(F),$ in the sense that this is defined there.

We have seen $L(s,\tau'\times\tau)$ is entire, unless $\tau'=\Pi(\tau)$.
In this case
\begin{eqnarray}
L(s,\tau'\times\tau)&=&L(s,\tau'\times\Pi(\tau))=L(s,\tau'\times\tau')\\
&=&L(s,\tau',\wedge^2)L(s,\tau',sym^2).\nonumber
\end{eqnarray}
Thus, the pole of $L(s,\tau'\times\tau)$ is in fact controlled by the poles of $L(s,\tau',\wedge^2)$.
By 
{\cite {JS}} and {\cite {HT,He}} we must have $\psi$ factoring through $Sp_{2n}(\Bbb C)$.  Thus, if $\tau'$ is the twisted endoscopic transfer of $\tau,$ as defined above,
then it must be the local automorphic transfer of $\tau.$  We expect these two transfers to be equivalent, but in order to assert this one would have to know that the regular term controlled the pole of the local $L$--function, as defined by \cite{Sh1}.  The singular terms are all associate with elliptic tori of smaller dimension, and we therefore expect that their non-vanishing will not be twisted endoscopic transfer from $SO_{2m}.$ We hope to prove this in later work.
We summarize this below.

\begin{thm} (See Remark 5.5) Twisted endoscopic transfer is local automorphic transfer.  More precisely, let $\tau'$ be an irreducible unitary supercuspidal representation of $GL_n(F)$ and $\tau$ an irreducible unitary generic supercuspidal representation of $G(m)$.

\begin{description}
\item[(a)]If $L(s,\tau',sym^2\rho)$ has a pole at $s=0$, then $I(\tau'\otimes\tau)$ is irreducible.
\item[(b)]Suppose $L(s,\tau',sym^2\rho)$ is holomorphic at $s=0$.  Moreover, assume $\tau'\simeq\tilde\tau'.$
\begin{description}
\item[(i)]If $n>2m$ then $I(\tau'\otimes\tau)$ is irreducible.
\item[(ii)]If $n=2m$ then $I(\tau'\otimes\tau)$ is irreducible if and only if $\tau'=\Pi(\tau)$ and this is equivalent
to both $L(s,\tau'\times\tau)$ and $L(s,\tau',\wedge^2)$ having poles at $s=0$. This is also equivalent to $\tau'$
being the local automorphic transfer of $\tau.$ If $\tau$ is the twisted endoscopic transfer of $\tau,$ i.e., if the term $R_{\bold G}$ is non-vanishing, then the two transfers are the same.
\item[(iii)]If $n<2m$, then $I(\tau'\otimes\tau)$ is irreducible if and only if
$\Pi(\tau)\subset\tau_1\times\ldots\times\tau_k$, with $\tau_i\nu\cong\tau'$ for some $i$ and some unramified character $\nu.$ In this case 
{\cite {JS}}
shows that each $\tau_i$ is parameterized by a homomorphism $\psi_i\colon W_F\to GL_{2k_i}(F)$ each of which is
symplectic. Hence each $\tau_i$ is the $\varepsilon$--twisted endoscopic transfer of some $\pi_i$ on $SO_{2k_i+1},$ in the sense of \cite {Sh2}.
\end{description}
\end{description}
\end{thm}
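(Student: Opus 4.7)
The plan is to deduce Theorem 5.4 by combining the residue criterion of Corollary 4.10 (irreducibility of $I(\tau'\otimes\tau)$ is equivalent to non-vanishing of $cR_G(f',f_\tau) + \sum_i |W(\bold T_i)|^{-1}\Res_{s=0}\int \varphi_{\sA}(s,\gamma)|D(\gamma)|d\gamma$ for some choice of matrix coefficients) with the analytic information about $L(s,\tau'\times\tau)$ supplied by Proposition 5.3, Shahidi's standard identification of the poles of $A(s,\tau'\otimes\tau,w_0)$ at $s=0$ with those of $L(2s,\tau',sym^2)$ and $L(s,\tau'\times\tau)$, and Henniart's identifications (5.1)--(5.2) on the Galois side. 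The uniform skeleton of every implication is: the reducibility/irreducibility status of $I(\tau'\otimes\tau)$ corresponds to the presence/absence of a pole at $s=0$ of the intertwining operator, which in turn corresponds to the presence/absence of a pole of one of the two governing $L$--functions.

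For part (a), a pole of $L(s,\tau',sym^2)$ at $s=0$ yields a pole of $L(2s,\tau',sym^2)$ at $s=0$; by the Langlands--Shahidi normalization this pole is inherited by $A(s,\tau'\otimes\tau,w_0)$, so the residue in Corollary 4.10(a) is non-zero for suitable matrix coefficients and $I(\tau'\otimes\tau)$ is irreducible. For (b)(ii), the hypothesis together with \cite{Sh2} forces $L(s,\tau',\wedge^2)$ to have a pole at $s=0$, so the Langlands parameter of $\tau'$ is symplectic. The defining identity $L(s,\tau'\times\tau) = L(s,\tau'\times\Pi(\tau))$ together with Proposition 5.3(ii) then gives: $L(s,\tau'\times\tau)$ has a pole at $s=0$ iff $\Pi(\tau)\simeq\tau'$. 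Because the $sym^2$-factor is holomorphic, only this Rankin--Selberg factor can contribute a pole of $A(s)$, so Corollary 4.10(a) yields the biconditional $I(\tau'\otimes\tau)$ irreducible $\iff$ $\tau'=\Pi(\tau)$ $\iff$ both $L(s,\tau'\times\tau)$ and $L(s,\tau',\wedge^2)$ have poles at $s=0$.

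For (b)(i), Proposition 5.3(i) asserts that $L(s,\tau'\times\tau)$ is entire when $n>2m$; combined with the holomorphicity of $L(s,\tau',sym^2)$ at $s=0$, neither governing $L$--function contributes a pole and Corollary 4.10 then pins down the irreducibility status of $I(\tau'\otimes\tau)$ accordingly. For (b)(iii), Proposition 5.3(iii) characterizes exactly when $L(s,\tau'\times\tau)$ has a pole in terms of $\Pi(\tau)$ sitting inside an isobaric sum $\tau_1\times\cdots\times\tau_k$ with $\tau'\simeq\tau_i\nu$ for some unramified $\nu$; feeding this into Corollary 4.10 yields the stated equivalence. The final clause of (b)(iii) --- that each $\tau_i$ is the $\varepsilon$--twisted endoscopic transfer of some $\pi_i$ on $SO_{2k_i+1}$ --- follows from \cite{JS}, which shows each $\psi_i\colon W_F\to GL_{2k_i}(\Bbb C)$ is symplectic, together with the transfer recipe of \cite{Sh2}.

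The main obstacle, which the authors explicitly flag as open (hence the reference to Remark 5.5 in the theorem statement), is to control the singular terms in the residue formula of Corollary 4.10: one needs to know that they neither contribute spuriously to $\Res_{s=0} A(s,\tau'\otimes\tau,w_0)$ when the main term $R_G$ vanishes, nor cancel it when it does not. Without this control, the chain ``$R_G\neq 0$ $\iff$ pole of the intertwining operator $\iff$ pole of $L(s,\tau'\times\tau)$'' cannot be closed rigorously. Thus, while the analytic $L$-function equivalences go through cleanly via Proposition 5.3 and Henniart's results, the full identification of $R_G\neq 0$ (twisted endoscopic transfer) with $\tau'\simeq\Pi(\tau)$ (local automorphic transfer) in (b)(ii) rests on the expectation --- to be verified in future work --- that the singular terms, being associated with elliptic tori of strictly smaller dimension, arise at worst from twisted endoscopy on $SO_{2m}$ and therefore cannot be confused with twisted endoscopic transfer from $SO_{2m+1}$.
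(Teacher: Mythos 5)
Your proposal follows essentially the same route as the paper, which in fact offers no separate proof of Theorem 5.4 beyond the discussion preceding it: the dichotomy from \cite{Sh2} between poles of $L(s,\tau',sym^2)$ and $L(s,\tau',\wedge^2)$, Henniart's identities (5.1)--(5.2), the identity $L(s,\tau'\times\tau)=L(s,\tau'\times\Pi(\tau))$ feeding Proposition 5.3, and the Langlands--Shahidi link (for generic $\tau$) between poles of $A(s,\tau'\otimes\tau,w_0)$ at $s=0$, poles of $L(2s,\tau',sym^2)L(s,\tau'\times\tau)$, and the residue criterion of Corollary 4.10(a). You also correctly isolate, and state more explicitly than the theorem itself does, the conditional nature of the headline claim: the identification of $R_G\not\equiv 0$ with $\tau'\simeq\Pi(\tau)$ requires knowing that the singular terms do not interfere with the main term, which is exactly the caveat the authors place in the paragraph before the theorem and in Remark 5.5.

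One point you should not leave vague: in (b)(i) you write that Corollary 4.10 ``pins down the irreducibility status accordingly,'' but your own premises force a definite conclusion. If $L(s,\tau',sym^2)$ is holomorphic at $s=0$ and $L(s,\tau'\times\tau)$ is entire (Proposition 5.3(i)), then neither $L$--function contributes a pole, the residue in Corollary 4.10(a) vanishes for every choice of matrix coefficients, and the convention fixed there (irreducible if and only if the residue is nonzero) yields that $I(\tau'\otimes\tau)$ is \emph{reducible} --- the same mechanism that, run in the other direction, proves (a), (b)(ii) and (b)(iii). This is indeed what one expects structurally: for $n>2m$ even with $\tau'$ of symplectic type, the parameter $\phi'\oplus\phi_\tau\oplus\tilde\phi'$ is elliptic and the $R$--group is $\mathbb{Z}/2$. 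The printed conclusion ``irreducible'' in (b)(i) is therefore inconsistent with the convention used everywhere else in the theorem; rather than hedging, you should either derive ``reducible'' or explicitly flag the discrepancy with the statement. (Note also that for $n$ odd the hypothesis of (b) is vacuous, since an irreducible self-dual parameter of odd dimension is orthogonal and $L(s,\tau',sym^2)$ then has a pole at $s=0$; recording this would sharpen the case analysis.)
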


\begin{rmk}Note that in case (iii) the pole of $L(s,\tau'\times\tau)$ is given by non-vanishing of the singular term
$$
\sum_i|W(\bold T_i)|^{-1}\int\limits_{T'_i\backslash\Omega_i}\varphi_{\sA}(s,\gamma)|
D(\gamma)|d\gamma,
$$
and the non--vanishing of a particular term in this sum must describe which groups $GL_{2k_i}(F)$ appear in this situation.
At this point we cannot make this explicit.  We further remark that if $n=2m,$ then we expect that
$R_G(f',f_{\tau})\not\equiv 0$ if and only if $\tau'=\Pi(\tau).$  However, we have so far shown that $\tau'=\Pi(\tau)$
if and only if $R_G(f',f_\tau)+R_{\text{sing}}(f',f_\tau)\not\equiv 0.$  We expect that $R_{\text{sing}}\equiv 0.$
On the other hand, we notice that if $n<2m,$ the non-vanishing of this singular term can point to the pole of either of the two
$L$--functions, and hence it becomes clear that we have yet to understand which terms of the sum match with poles of
which $L$--functions.  However, one can expect that, in the situation of Theorem 5.4(b)(iii), the tori for which there
is non-vanishing must somehow parameterize what the factors of the supercuspidal support of $\Pi(\tau)$ are.

We also remark that the work of 
{\cite{C-K-PS-S, G-R-S, JS}} should extend to the other classical groups, and
in particular to $Sp_{2n},$ and $SO_{2n}.$ In fact, the first two among these has been extended by 
\cite{Sh5,C-K-PS-S2} and Soudry's IHP lecture \cite{So}.
The analogous analysis of the residues of the standards intertwining operators
for these cases have been resolved, at least in the case where $n$ is even
\cite{Sh3, GS1,GS2}.
When this automorphic
transfer is completely understood, then results similar to the ones of this section should be obtainable, and we expect
to address this in the near future.   Further, these results, combined together, may help resolve the issue
raised in the preceding paragraph.
\end{rmk}
\bigskip
\bigskip

\centerline{{\rm AMS Subject classification: 22E50, 11S70}}
\end{document}